\newtheorem{theorem}{Theorem}[section]
\newtheorem{lemma}[theorem]{Lemma}
\newtheorem{proposition}[theorem]{Proposition}
\newtheorem{corollary}[theorem]{Corollary}
\newtheorem{question}[theorem]{Question}
\newtheorem{conjecture}[theorem]{Conjecture}
\theoremstyle{definition}
\newtheorem{definition}[theorem]{Definition}
\newtheorem{example}[theorem]{Example}
\newtheorem{remark}[theorem]{Remark}
\newtheorem*{acknowledgements}{Acknowledgements}
\numberwithin{equation}{section}
\newskip\aline \newskip\halfaline
\DeclareMathOperator{\Vol}{Vol}
\DeclareMathOperator{\Symp}{Symp}
\DeclareMathOperator{\rank}{rank}
\newcommand{\bC}{\mathbb C}
\newcommand{\bP}{\mathbb P}
\newcommand{\bR}{\mathbb R}
\newcommand{\bZ}{\mathbb Z}
\newcommand{\ba}{\textbf{a}}
\newcommand{\bb}{\textbf{b}}
\newcommand{\bc}{\textbf{c}}
\title{Toric Structures on Symplectic Bundles of Projective Spaces}
\author{Andrew Fanoe}
\begin{document}

\begin{abstract}Recently, extending work by Karshon, Kessler and Pinsonnault \cite{KKP}, Borisov and McDuff showed in \cite{D} that a given symplectic manifold $(M,\omega)$ has a finite number of distinct toric structures.  Moreover, in \cite{D} McDuff also showed a product of two projective spaces $\bC P^r\times \bC P^s$ with any given symplectic form has a unique toric structure provided that $r,s\geq 2$. In contrast, the product $\bC P^r \times \bC P^1$ can be given infinitely many distinct toric structures, though only a finite number of these are compatible with each given symplectic form $\omega$.  In this paper we extend these results by considering the possible toric structures on a toric symplectic manifold $(M,\omega)$ with $\dim H^2(M)=2$.  In particular, all such manifolds are $\bC P^r$ bundles over $\bC P^s$ for some $r,s$.  We show that there is a unique toric structure if $r<s$, and also that if $r,s\geq 2$ then $M$ has at most finitely many distinct toric structures that are compatible with any symplectic structure on $M$.  Thus, in this case the finiteness result does not depend on fixing the symplectic structure.  We will also give other examples where $(M,\omega)$ has a unique toric structure, such as the case where $(M,\omega)$ is monotone.
\end{abstract}

\maketitle

\section{Introduction}

Let $\widetilde{M}\rightarrow M\rightarrow\widehat{M}$ be a locally trivial fibration.  Then M is a symplectic bundle if $\widetilde{M}$ has a symplectic structure $\omega_0$ and if the structure group of the bundle is $\Symp(\widetilde{M})$.  By Lemma \ref{toricclassifyinglemma} below, any symplectic toric manifold $M$ with $\dim H^2(M)=2$ is a $\bC P^r$-bundle over $\bC P^s$.  Furthermore, any toric structure can be realized as the projectivization $\bP(L_{-a_0}\oplus L_{-a_1}\oplus \dots \oplus L_{-a_r})$ of a sum of complex line bundles $L_{-a_i}$ over $\bC P^s$ with the obvious action of the torus $T^{r+s}$, where $L_c$ is the line bundle over $\bC P^s$ with first Chern class $c$ times a generator of $H^2(\bC P^s)$. By tensoring with $L_{c}$ where $c=-\max a_i$ we may assume that $a_0=0\leq a_1\leq\ldots\leq a_r$.  Moreover the symplectic form $\omega$ restricts to the standard Fubini-Study form on the fiber, and so may be normalized by requiring that $\omega(\ell)=r+1$, where $\ell$ is the homology class of a line in the fiber.  Since $H^2(M)$ has rank $2$, the above normalization still leaves $[\omega]$ with one free parameter.  We call this parameter $\kappa$, and it can be easily seen to be determined by $\Vol(M,\omega)$, as in Lemma \ref{volumetheorem}. Thus from the above we see that the tuple $(\ba;\kappa):= (a_1,\ldots,a_r;\kappa)$ determines a toric structure on a symplectic toric manifold $(M,\omega)$ with $\dim H^2(M)=2$, where $0\leq a_1\leq\ldots\leq a_r$ and $\kappa$ is related to the symplectic volume of $M$.

We denote the resulting toric manifold $(M_{\ba},\omega_{\ba}^\kappa, T_{\ba})$.  By Definition \ref{bundledefinition} and Lemma \ref{bundlecorrespondencetheorem} below, for each tuple $\ba$ there is a number $K_{\ba}(s)=\sigma_1(\ba)-s$ such that $M_{\ba}$ admits the structure described above for all $\kappa>K_{\ba}(s)$.  Furthermore, we have the following fundamental result, which is proven at the end of section $2$.

\begin{theorem}\label{toricclassifyingtheorem}Let $(M,\omega,T)$ be a toric symplectic manifold with $\dim H^2(M)=2$.  Then there is a unique tuple $(\ba;\kappa)$ with $0\leq a_1\leq\ldots\leq a_r$ such that $(M,\omega,T)$ is equivariantly symplectomorphic to $(M_{\ba},\omega_{\ba}^\kappa,T_{\ba})$.
\end{theorem}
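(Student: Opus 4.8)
The plan is to deduce the statement from Delzant's classification of closed symplectic toric manifolds. Existence of \emph{some} tuple was essentially obtained in the discussion preceding the statement: by Lemma~\ref{toricclassifyinglemma}, $(M,\omega,T)$ is equivariantly symplectomorphic to $\bP(L_{-b_0}\oplus\cdots\oplus L_{-b_r})$ over $\bC P^s$ with the standard $T^{r+s}$-action, and tensoring the bundle by $L_{-\max_i b_i}$ together with a permutation of the summands (neither of which changes the projectivization or the torus action) puts the twisting in the form $0=b_0\le b_1\le\cdots\le b_r=:\ba$; rescaling $\omega$ transverse to the base achieves $\omega(\ell)=r+1$, and since $\rank H^2(M)=2$ the class $[\omega]$ then carries a single remaining parameter $\kappa$, which by Lemma~\ref{bundlecorrespondencetheorem} satisfies $\kappa>K_{\ba}(s)$. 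For uniqueness, suppose $(M_{\ba},\omega_{\ba}^\kappa,T_{\ba})$ and $(M_{\ba'},\omega_{\ba'}^{\kappa'},T_{\ba'})$ are equivariantly symplectomorphic with both tuples in this normal form. By Delzant's theorem this happens exactly when the two moment polytopes coincide up to the action of $\operatorname{AGL}(r+s,\bZ)=\bZ^{r+s}\rtimes GL(r+s,\bZ)$, so it suffices to show that the integral affine equivalence class of the moment polytope $\Delta_{\ba}^\kappa$ of $(M_{\ba},\omega_{\ba}^\kappa)$ determines $(\ba;\kappa)$.

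Reading off the moment polytope from the fan of $\bP(L_{-a_0}\oplus\cdots\oplus L_{-a_r})$, one finds that $\Delta_{\ba}^\kappa$ is a ``simplex bundle'' over the standard $s$-simplex: the fibre over a point $u$ of the base simplex is an $r$-simplex whose size is a fixed affine function of $u$ whose coefficients record $\kappa$ and the $a_i$ (the nondegeneracy of every fibre simplex is exactly the condition $\kappa>K_{\ba}(s)$). From $\dim M=2(r+s)$ the sum $r+s$ is an invariant. The $r+s+2$ facets of $\Delta_{\ba}^\kappa$ split into $s+1$ ``horizontal'' ones, each a moment polytope of a $\bC P^r$-bundle over $\bC P^{s-1}$, and $r+1$ ``vertical'' ones, each a moment polytope of a $\bC P^{r-1}$-bundle over $\bC P^s$; iterating this recursion on the combinatorial type of the polytope (equivalently, on its normal fan) both separates $r$ from $s$ and shows that the base projection $\Delta_{\ba}^\kappa\to\Delta^s$ is canonically attached to the affine equivalence class rather than being extra data.

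Given the base projection, $\kappa$ is recovered from the lattice volume of $\Delta_{\ba}^\kappa$, equivalently from $\Vol(M,\omega)$ via Lemma~\ref{volumetheorem}. The multiset $\{a_0,\dots,a_r\}$ is recovered from the integral structure of the normal fan — concretely, from the linear relations among the $r+s+2$ primitive facet normals, which encode exactly the Chern data of the bundle $\bigoplus_i L_{-a_i}$ over $\bC P^s$ (for instance the pairings of these normals with the edge vectors of $\Delta_{\ba}^\kappa$ along a vertical face return the $a_i$). The normalization $0=a_0\le a_1\le\cdots\le a_r$ then upgrades this multiset to a well-defined ordered tuple, so $(\ba;\kappa)=(\ba';\kappa')$ and the proof is complete.

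The step I expect to be the main obstacle is the middle claim: that the bundle (base--fibre) structure — and in particular the splitting of facets into horizontal and vertical ones — is intrinsic to $\Delta_{\ba}^\kappa$ up to integral affine equivalence, so that the combinatorial recursion used to separate $r$ and $s$ and to single out the vertical facets does not depend on the chosen realization as a projectivized bundle; concretely, one must rule out an element of $\operatorname{AGL}(r+s,\bZ)$ carrying $\Delta_{\ba}^\kappa$ to $\Delta_{\ba'}^{\kappa'}$ while mixing base and fibre directions so as to change the ordered tuple. A parallel way to package the same information is via the integral cohomology ring $H^*(M_{\ba})\cong\bZ[x,y]/\bigl(x^{s+1},\,\prod_{i=0}^r(y+a_ix)\bigr)$ together with the class $[\omega_{\ba}^\kappa]$: the relation $x^{s+1}=0$ (with $x^s\neq0$) isolates $s$, the second relation then isolates $r$ and $\{a_i\}$, and $\kappa$ is encoded in $[\omega]$ — although checking that $x$ is characterized intrinsically is the same difficulty in another guise.
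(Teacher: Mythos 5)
Your overall architecture matches the paper's: existence comes from Lemma \ref{toricclassifyinglemma}, and uniqueness is reduced via Delzant's theorem to showing that the integral affine equivalence class of $\Delta_{\ba}^\kappa$ determines $(\ba;\kappa)$, with $\kappa$ recovered from the volume (Lemma \ref{volumetheorem}) and the multiset $\{a_i\}$ recovered from the geometry of the polytope. This is exactly the content of Lemma \ref{toricstructurelemma}; your recovery of the $a_i$ from the lattice relations among the facet normals is just an equivalent bookkeeping of the paper's comparison of the volumes of the $r+1$ fiber copies of $\Delta_s$ sitting over the vertices of the base.

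The genuine gap is the step you yourself flag, and the fix you propose for it fails. You assert that iterating the facet recursion ``on the combinatorial type of the polytope'' separates $r$ from $s$ and shows the base projection is intrinsic. But the combinatorial type of $\Delta_{\ba}^\kappa$ is that of $\Delta_r\times\Delta_s$, which is symmetric under exchanging the two factors: combinatorics can never tell you which family of facets is the base and which is the fiber (the product $\ba=0$ genuinely is a bundle both ways), and when $r=s$ it cannot even distinguish the two families. So the recursion does not rule out an element of $GL(r+s;\bZ)$ carrying $\Delta_{\ba}^\kappa$ to some $\Delta_{\bb}^{\kappa'}$ with base and fiber roles exchanged, which is precisely the scenario that would break uniqueness of the tuple. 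The paper closes this hole cohomologically rather than combinatorially: Lemma \ref{B=0homologicallemma} shows that when some $a_i\neq 0$ any degree-two class $\gamma=A\alpha_0+B\beta_0$ with $\gamma^{s+1}=0$ must have $B=0$, i.e.\ the base generator is intrinsically characterized; this is what the second half of Lemma \ref{toricclassifyinglemma} uses to force any other presentation to have the same $(r,s)$, with the symmetric product case $\ba=0$ handled separately. Your closing remark that the cohomology ring ``packages the same information'' identifies the right route, but the assertion you leave unproved there --- that $x$ is intrinsically characterized --- is exactly the missing lemma, and it is where the hypothesis $\ba\neq 0$ must enter.
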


Thus, to count toric structures on manifolds with $\dim H^2(M)=2$, it suffices to count toric structures on the manifolds $M_{\ba}$.

The following result is based on Theorem $6.1$ of \cite{CMS} and is instrumental to the proofs of many of our results.  Due to its important role in the rest of the paper, we will give the details of the proof using our notation in section $2$.

\begin{proposition}\label{mainlemma}  Let $M_{\ba}$ and $M_{\bb}$ be $\bC P^r$ bundles over $\bC P^s$ as above for some vectors $\ba$ and $\bb$.  The following are equivalent:
\begin{enumerate}
 \item $H^*(M_{\ba};\bZ)$ is isomorphic to $H^*(M_{\bb};\bZ)$ as a ring.
 \item $\bP(L_{0}\oplus L_{-a_1}\oplus \dots \oplus L_{-a_r})$ is isomorphic to $\bP(L_{0}\oplus L_{-b_1}\oplus \dots \oplus L_{-b_r})$ as a
 projective vector bundle.
 \item $M_{\ba}$ is isomorphic to $M_{\bb}$ as a symplectic bundle.
 \item There is $C\in \bZ$ such that
 \begin{equation*}
 \sigma_i(C,C+\ba):=\sigma_i(C,a_1+C,\dots, a_r+C) = \sigma_i(0,b_1,\dots, b_r)=:\sigma_i(0,\bb),\quad 1\leq i\leq \min\{r+1,s\}
 \end{equation*}
 where $\sigma_i$ denotes the $i$th elementary symmetric function.
\end{enumerate}
\end{proposition}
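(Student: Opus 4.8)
The plan is to prove the cycle of implications $(4)\Rightarrow(2)\Rightarrow(3)\Rightarrow(1)\Rightarrow(4)$, which is the most economical route. The implication $(2)\Rightarrow(3)$ is essentially immediate: an isomorphism of projective vector bundles over $\bC P^s$ carries the fibrewise Fubini--Study forms to one another and intertwines the structure groups, so it is in particular an isomorphism of symplectic bundles. Likewise $(3)\Rightarrow(1)$ is standard, since an isomorphism of symplectic (hence smooth fibre) bundles over the same base is a diffeomorphism of total spaces and therefore induces a ring isomorphism on integral cohomology.

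For $(4)\Rightarrow(2)$, recall that $\bP(E)\cong\bP(E')$ as projective bundles precisely when $E'\cong E\otimes L$ for some line bundle $L$ over the base; since every line bundle over $\bC P^s$ is some $L_c$, this amounts to showing that the hypothesis in $(4)$ forces $L_0\oplus L_{-b_1}\oplus\cdots\oplus L_{-b_r}\cong (L_0\oplus L_{-a_1}\oplus\cdots\oplus L_{-a_r})\otimes L_{-C}=L_{-C}\oplus L_{-a_1-C}\oplus\cdots\oplus L_{-a_r-C}$. Two sums of line bundles over $\bC P^s$ are isomorphic as vector bundles iff their total Chern classes agree (because a direct sum of line bundles over $\bC P^s$ is determined by its splitting type up to order when $s$ is large enough, and in general the Chern classes, which live in $\bZ[x]/(x^{s+1})$, are exactly the truncated elementary symmetric functions of the first Chern classes); the condition $\sigma_i(C,\ba+C)=\sigma_i(0,\bb)$ for $1\le i\le\min\{r+1,s\}$ is precisely the statement that these total Chern classes agree in $H^*(\bC P^s;\bZ)$. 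Hence $(4)$ gives the required bundle isomorphism and thus $(2)$.

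The main work is $(1)\Rightarrow(4)$, and this is where I expect the real obstacle to lie. The idea is to compute $H^*(M_{\ba};\bZ)$ explicitly via the Leray--Hirsch theorem: writing $x\in H^2$ for the pullback of the hyperplane class from $\bC P^s$ and $y\in H^2$ for the fibrewise hyperplane class, one has
\begin{equation*}
H^*(M_{\ba};\bZ)\cong \bZ[x,y]\big/\big(x^{s+1},\ \textstyle\prod_{i=0}^{r}(y+a_i x)\big),
\end{equation*}
where we set $a_0=0$; expanding the second relation gives $\sum_{i=0}^{r}\sigma_i(0,\ba)\,x^i y^{r+1-i}=0$. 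Now suppose $\phi\colon H^*(M_{\ba})\to H^*(M_{\bb})$ is a ring isomorphism. One first pins down where $\phi$ can send the distinguished classes: $x$ is characterized (up to the ambiguity described below) by being a generator of the subring pulled back from the base, equivalently by $x^{s+1}=0$ together with properties of the annihilator/divisibility in low degrees, so $\phi(x)=\pm x$, and then $\phi(y)=\pm y + Cx$ for some integer $C$ (the only degree-$2$ classes mapping appropriately). Matching signs using positivity of the relevant intersection numbers forces $\phi(x)=x$, $\phi(y)=y+Cx$ (possibly after composing with an automorphism), and then substituting into the defining relation $\prod_{i=0}^r(\phi(y)+a_i\phi(x))=0$ in $H^*(M_{\bb})$ and comparing with $\prod_{j=0}^r(y+b_j x)=0$ modulo $x^{s+1}$ yields exactly the system $\sigma_i(C,\ba+C)=\sigma_i(0,\bb)$ for $1\le i\le\min\{r+1,s\}$ — the truncation at $\min\{r+1,s\}$ arising because $x^{s+1}=0$ and the degree in $y$ is at most $r+1$. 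The delicate point, and the step I would be most careful about, is controlling the ambiguity in the choice of the classes $x,y$: when $s$ is small relative to $r$ there can be extra degree-$2$ classes and the ring can have automorphisms that permute candidate generators, so one must argue that any such automorphism is realized by a shift $y\mapsto y+Cx$ (and a base sign) and does not enlarge the set of solutions; this is handled by analysing the ideal of relations degree by degree and using that $H^2$ has rank exactly $2$.
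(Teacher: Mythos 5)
Your overall routing $(4)\Rightarrow(2)\Rightarrow(3)\Rightarrow(1)\Rightarrow(4)$ is sound and in fact slightly more economical than the paper's (the paper proves $(1)\Rightarrow(2)\Rightarrow(4)\Rightarrow(1)$ and separately $(2)\Leftrightarrow(3)$, invoking Reznikov's symplectic Chern classes for $(3)\Rightarrow(2)$, which your cycle avoids). The implications $(4)\Rightarrow(2)$, $(2)\Rightarrow(3)$, $(3)\Rightarrow(1)$ are handled essentially as in the paper. But the heart of the matter is $(1)\Rightarrow(4)$, and there your argument has a genuine gap: the assertion that any ring isomorphism must send the base hyperplane class $x$ to $\pm x$ is exactly the nontrivial point, and you only gesture at it ("properties of the annihilator/divisibility in low degrees", "analysing the ideal of relations degree by degree"). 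What must actually be proved is that in $\bZ[x,y]/(x^{s+1},\,y\prod_{i=1}^r(y-a_ix))$ the only degree-two classes $z=Ax+By$ with $z^{s+1}=0$ have $B=0$. The paper isolates this as a separate lemma and proves it by writing $(Ax+By)^{s+1}-A^{s+1}x^{s+1}=D\cdot y\prod_i(y-a_ix)$ and comparing the number of linear factors: the left side, being $(X-Y)^{s+1}-X^{s+1}$ after rescaling, has at most two linear factors, while the right side has at least three when $r>1$ and some $a_i\neq 0$. That argument is not a routine degree count, and it genuinely requires $r>1$.

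Moreover, your claim $\phi(x)=\pm x$ is simply false when $r=1$: for the Hirzebruch surface with $a=2$ the ring is $\bZ[x,y]/(x^2,\,y^2-2xy)$ and the class $y-x$ also squares to zero, so an isomorphism from $H^*(S^2\times S^2)$ may send $x$ to $y-x$. The conclusion $(4)$ still holds there (with $C=(a-b)/2$), but it has to be established by a separate computation; the paper treats $r=s=1$ via the classification of Hirzebruch surfaces and $r=1$, $s\geq 2$ by a direct coefficient comparison showing $a\mid b$ and $b\mid a$. So the case analysis you hoped to absorb into a general degree-by-degree argument cannot be avoided, and without the factorization lemma (or a substitute) the key implication is unproved.
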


It is natural to conjecture that if $(M_{\ba},\omega^\kappa_{\ba})$ is isomorphic to $(M_{\bb},\omega^\kappa_{\bb})$ as a symplectic bundle, i.e. there exists a diffeomorphism $\phi:M_{\ba}\rightarrow M_{\bb}$ preserving the fiberwise symplectic structure, then they are symplectomorphic for all $\kappa>\max (K_{\ba},K_{\bb})$.  However this is not yet known except when $s=1$ or, as in Lemma \ref{deformationequivalentlemma} below, if $\kappa\gg 0$.  In fact, we have the following theorem, proven in section $3$.

\begin{theorem}\label{symplectomorphismtheorem}
Let $(M_{\ba},\omega^\kappa_{\ba})$ and $(M_{\bb},\omega^\kappa_{\bb})$ be $\bC P^r$ bundles over $\bC P^1$ as above with $\kappa>\max(K_{\ba},K_{\bb})$.  Then $(M_{\ba},\omega^\kappa_{\ba})$ is symplectomorphic to $(M_{\bb},\omega^\kappa_{\bb})$ if and only if $(M_{\ba},\omega^\kappa_{\ba})$ is isomorphic to $(M_{\bb},\omega^\kappa_{\bb})$ as a symplectic bundle.
\end{theorem}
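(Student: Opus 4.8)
The plan is to prove the nontrivial direction: if $(M_{\ba},\omega^\kappa_{\ba})$ and $(M_{\bb},\omega^\kappa_{\bb})$ are symplectomorphic, then they are isomorphic as symplectic bundles. (The reverse implication is immediate for $s=1$, since two symplectic bundles over $S^2=\bC P^1$ that are isomorphic as such can be shown to have cohomologous symplectic forms after the stated normalization, and hence — this is the easy half that I would dispatch using Proposition \ref{mainlemma} together with Moser's theorem — are symplectomorphic; alternatively one invokes Lemma \ref{deformationequivalentlemma} and the $s=1$ structure.) So suppose $\phi:M_{\ba}\to M_{\bb}$ is a symplectomorphism. First I would use $\phi$ to transport cohomology, so that $H^*(M_{\ba};\bZ)\cong H^*(M_{\bb};\bZ)$ as rings; by the equivalence (1)$\Leftrightarrow$(2)$\Leftrightarrow$(3) of Proposition \ref{mainlemma}, this already forces $M_{\ba}$ and $M_{\bb}$ to be isomorphic as symplectic bundles \emph{as smooth/topological objects}. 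The remaining content is therefore to check that $\phi$ can be taken to respect this bundle structure at the level needed, i.e. that the symplectic forms match up under a bundle isomorphism — but since over $\bC P^1$ the normalization $\omega(\ell)=r+1$ plus the single parameter $\kappa$ (determined by volume, Lemma \ref{volumetheorem}) pins down the cohomology class of $\omega^\kappa_{\ba}$ completely once the bundle is fixed, and $\phi$ preserves volume, the two forms are cohomologous on the identified manifold, and Moser finishes.

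The subtlety — and the reason the theorem is only known for $s=1$ — is that Proposition \ref{mainlemma} gives a ring isomorphism $\Rightarrow$ bundle isomorphism only under the symmetric-function condition (4), and one must check that the diffeomorphism type together with the symplectic class is enough to recover $\ba$ up to the equivalence generated by (4). For $s=1$ condition (4) reads $\sigma_1(C,\ba+C)=\sigma_1(0,\bb)$, i.e. $(r+1)C+\sigma_1(\ba)=\sigma_1(\bb)$; combined with the constraint $0\le a_1\le\cdots\le a_r$ and the fact (from Definition \ref{bundledefinition} / Lemma \ref{bundlecorrespondencetheorem}) that the bundle only depends on $\ba$ through this shifted class, two vectors $\ba,\bb$ give isomorphic bundles over $\bC P^1$ exactly when they have the same image under this normalization. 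So the key step is: show that a symplectomorphism $\phi$ forces $\ba$ and $\bb$ to lie in the same class under (4). I would do this by identifying a diffeomorphism invariant — the set of homology classes of symplectically embedded spheres, or the Chern classes / the intersection form on $H^2$ — that reads off $\sigma_i(0,\bb)$, invoking the known classification of $S^2$-bundles and $\bC P^r$-bundles over $S^2$.

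Concretely, the steps in order: (i) reduce to proving the hard direction; (ii) from the symplectomorphism extract a ring isomorphism $H^*(M_{\ba})\cong H^*(M_{\bb})$ and apply Proposition \ref{mainlemma} to get condition (4), hence a symplectic bundle isomorphism $\Psi:M_{\ba}\to M_{\bb}$ \emph{as smooth bundles}; (iii) compare $[\Psi^*\omega^\kappa_{\bb}]$ with $[\omega^\kappa_{\ba}]$ in $H^2(M_{\ba};\bR)$: both restrict to $(r+1)$ times the Fubini–Study class on the fiber, and both have the same evaluation against the section class because $\phi$ (hence the induced data) preserves the volume, which by Lemma \ref{volumetheorem} determines $\kappa$; hence the classes agree; (iv) apply Moser's stability theorem to the path $t\mapsto (1-t)\Psi^*\omega^\kappa_{\bb}+t\,\omega^\kappa_{\ba}$ (all cohomologous, all symplectic since over $\bC P^1$ the relevant cone of symplectic bundle forms with fixed fiber class and volume is convex — here I would lean on Lemma \ref{deformationequivalentlemma} or the explicit toric description to see the path stays symplectic) to upgrade $\Psi$ to a symplectomorphism respecting the bundle structure.

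The main obstacle I anticipate is step (iv): ensuring the Moser path stays nondegenerate, equivalently that the space of symplectic forms on the fixed bundle $M_{\bb}$ in the given cohomology class (with the fiber normalization) is connected. For $s=1$ this is where the hypothesis genuinely bites — $\bC P^r$-bundles over $S^2$ are simple enough (e.g. via the toric/Delzant picture, or Lalonde–McDuff-type results on ruled-like manifolds) that this connectedness holds, whereas for $s\ge 2$ it is exactly the open problem flagged in the paragraph before the theorem. A secondary technical point is being careful that the ring isomorphism coming from an \emph{unoriented} or orientation-reversing $\phi$ is handled — but symplectomorphisms are orientation preserving, so condition (4) applies as stated. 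I expect the bulk of the writing to be in verifying step (iii) (the cohomological bookkeeping with the classes $\ell$ and the section, using $\sigma_1(\ba)-s=K_{\ba}(s)$ for the $\kappa>\max(K_{\ba},K_{\bb})$ hypothesis to guarantee both toric forms exist) and in citing the right connectedness input for step (iv).
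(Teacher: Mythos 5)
There is a genuine gap, and it begins with a misidentification of which implication carries the content. The direction ``symplectomorphic $\Rightarrow$ isomorphic as symplectic bundles'' is the easy one: a symplectomorphism gives a ring isomorphism $H^*(M_{\ba};\bZ)\cong H^*(M_{\bb};\bZ)$, and Proposition \ref{mainlemma}, $(1)\Rightarrow(3)$, finishes in one line --- exactly your step (ii). The substance of the theorem is the converse: a symplectic bundle isomorphism must be upgraded to a symplectomorphism for \emph{every} $\kappa>\max(K_{\ba},K_{\bb})$, not just for $\kappa\gg 0$. Your plan for that converse is a Moser argument along the linear path $t\mapsto(1-t)\Psi^*\omega^\kappa_{\bb}+t\,\omega^\kappa_{\ba}$, and the nondegeneracy of that path is precisely the point you cannot take for granted: Lemma \ref{deformationequivalentlemma} establishes the linear isotopy only after adding a large multiple of $\pi^*(\omega_s)$, i.e.\ only for $\kappa\gg 0$, and no connectedness result for the space of symplectic forms in a fixed class is available for $\bC P^r$-bundles over $S^2$ with $r\geq 2$ (the Lalonde--McDuff uniqueness results you invoke concern $4$-dimensional ruled surfaces, i.e.\ $r=1$). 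So as written, your argument proves the trivial half and, for the nontrivial half, defers to exactly the statement that needs proof.

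The idea you are missing is the one in Lemma \ref{symplectomorphismlemma}: for $s=1$ one slices the polytope $\Delta^\kappa_{\ba}$ by a hyperplane $\mathcal{H}$ transverse to the fiber direction, so that the cross-section $\Delta_{\mathcal{H}}$ is a copy of $\Delta_r$, and reglues the two halves of the manifold along $\Phi^{-1}(\mathcal{H})$ using (the lift of) an affine symmetry of $\Delta_r$ that permutes vertices. This produces an explicit symplectomorphism realizing the elementary moves $e_1(\ba)=(a_1+2,a_2+1,\ldots,a_r+1)$ and $e_{i,j}(\ba)=(a_1,\ldots,a_i-1,\ldots,a_j+1,\ldots,a_r)$, and the combinatorial half of the proof is to check that condition $(4)$ of Proposition \ref{mainlemma} with $s=1$ (i.e.\ $\sigma_1(\bb)=(r+1)C+\sigma_1(\ba)$) lets you reach $\bb$ from $\ba$ by a chain of such moves. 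The construction hinges on $\Delta_r$ having affine symmetries swapping any two vertices --- which is exactly what fails for $s\geq 2$, where the cross-section is a $\Delta_r$-bundle over $\Delta_{s-1}$ with essentially no symmetries. Your write-up correctly senses that ``$s=1$ is where the hypothesis bites,'' but attributes it to a connectedness statement for symplectic forms rather than to this explicit cut-and-reglue mechanism, and without that mechanism the proof does not close.
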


Since this is not known in the general case $s>1$, we will consider the following weaker notion of equivalence.

\begin{definition} We say that two symplectic manifolds $(M,\omega), (M',\omega')$ are {\bf deformation equivalent} and write $(M,\omega)\sim (M',\omega')$ if there is a diffeomorphism $\phi:M\rightarrow M'$ and a family $\omega_t, t\in [0,1],$ of symplectic forms on $M$
such that
\[
\phi^*([\omega']) = [\omega],\quad \omega_0=\phi^*(\omega'),\quad \omega_1=\omega.
\]
\end{definition}

\begin{remark}In contrast to the usual definition of deformation equivalence, we have required $\phi^*([\omega'])=[\omega]$. Thus the deformation starts and ends in the same cohomology class, even if it leaves this class for some $t$.
\end{remark}

The following lemma says that $(M_{\ba},\omega_{\ba}^\kappa)$ and $(M_{\bb},\omega_{\bb}^\kappa)$ are isomorphic as symplectic bundles if and only if they are deformation equivalent, and it will be proven in section $3$.

\begin{lemma}\label{deformationequivalentlemma}Let $\ba=(a_1,\ldots,a_r)$, $\bb=(b_1,\ldots,b_r)$ and $\kappa$ determine the bundles $(M_{\ba},\omega_{\ba}^\kappa)$ and $(M_{\bb},\omega_{\bb}^\kappa)$. Then $M_{\ba}$ and $M_{\bb}$ are isomorphic as symplectic bundles if and only if $(M_{\ba},\omega_{\ba}^\kappa)$ is deformation equivalent to $(M_{\bb},\omega_{\bb}^\kappa)$.  Moreover, for $\kappa\gg 0$, we also have $(M_{\ba},\omega_{\ba}^\kappa)$ is symplectomorphic to $(M_{\bb},\omega_{\bb}^\kappa)$.
\end{lemma}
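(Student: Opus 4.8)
The plan is to prove the two directions of the ``isomorphic as symplectic bundles $\iff$ deformation equivalent'' equivalence separately, and then treat the ``moreover'' clause about genuine symplectomorphism for $\kappa\gg0$ as a compactness/Moser-type argument. For the forward direction, suppose $M_{\ba}$ and $M_{\bb}$ are isomorphic as symplectic bundles. By Proposition \ref{mainlemma} this means there is an integer $C$ with $\sigma_i(C,C+\ba)=\sigma_i(0,\bb)$ for $1\leq i\leq\min\{r+1,s\}$, and an actual fiberwise-symplectic diffeomorphism $\phi:M_{\ba}\to M_{\bb}$. I would pull back the symplectic form $\omega_{\bb}^\kappa$ by $\phi$ and compare its cohomology class with $\omega_{\ba}^\kappa$. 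Both forms restrict to the same Fubini--Study form (normalized by $\omega(\ell)=r+1$) on the fibers, and both have the same volume, hence the same parameter $\kappa$; since $H^2(M)$ has rank $2$ and one generator is pinned down by the fiber class while the other coordinate is fixed by $\kappa$, I expect $\phi^*[\omega_{\bb}^\kappa]=[\omega_{\ba}^\kappa]$ to follow directly (this is essentially the content of the normalization discussion in the introduction together with Lemma \ref{volumetheorem}). Then the straight-line path $\omega_t=(1-t)\omega_{\ba}^\kappa+t\,\phi^*\omega_{\bb}^\kappa$ is a path of closed $2$-forms in a fixed cohomology class; the issue is that it need not consist of symplectic (i.e. nondegenerate) forms. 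Here I would invoke that $\phi$ is a \emph{symplectic bundle} isomorphism: on each fiber the interpolation stays nondegenerate because it interpolates two Fubini--Study forms pulled back to the fiber, and then a standard argument (thickening to a neighborhood of a fiber, using that nondegeneracy is an open condition and that the base $\bC P^s$ contributes a fixed positive amount) shows $\omega_t$ can be taken symplectic, perhaps after a small perturbation within the cohomology class. This gives the required deformation.

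For the reverse direction, suppose $(M_{\ba},\omega_{\ba}^\kappa)\sim(M_{\bb},\omega_{\bb}^\kappa)$, so in particular there is a diffeomorphism $\phi:M_{\ba}\to M_{\bb}$ with $\phi^*[\omega_{\bb}^\kappa]=[\omega_{\ba}^\kappa]$. A diffeomorphism induces a ring isomorphism $H^*(M_{\bb};\bZ)\cong H^*(M_{\ba};\bZ)$, so condition (1) of Proposition \ref{mainlemma} holds, and therefore condition (3): $M_{\ba}$ and $M_{\bb}$ are isomorphic as symplectic bundles. That is the entire content of this direction, so it is essentially immediate from Proposition \ref{mainlemma} once one observes that a deformation equivalence supplies an underlying diffeomorphism and hence a cohomology ring isomorphism.

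The ``moreover'' statement—that for $\kappa\gg0$ deformation equivalent implies genuinely symplectomorphic—is the part I expect to be the main obstacle, and it is really where the hypothesis $\kappa\gg0$ is used. The natural approach is Moser stability applied to the path $\omega_t$ from the first part: if all the $\omega_t$ were cohomologous \emph{and} symplectic, Moser would finish the job immediately. The difficulty is precisely the phrase in the Remark after the definition—the deformation is allowed to leave the cohomology class for intermediate $t$—so one cannot apply Moser to the given path directly. I would instead argue as follows: when $\kappa\gg0$, the manifold $M_{\ba}$ with form $\omega_{\ba}^\kappa$ is (symplectically) a large-ball bundle, and one has enough room to construct an explicit symplectic path staying in the cohomology class, for instance by realizing both $\omega_{\ba}^\kappa$ and $\phi^*\omega_{\bb}^\kappa$ as fiberwise-Fubini--Study forms coupled to a sufficiently positive form on the base (a Thurston-type construction), where the coupling term dominates and keeps every convex combination nondegenerate. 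Concretely: pick a connection on the projective bundle; for $\kappa$ large the form $\omega=\kappa\,\pi^*\omega_{FS}^{base}+(\text{fiberwise FS})$ is symplectic, the fiberwise pieces for $\ba$ and $\phi^*$-of-$\bb$ are joined by a fiberwise symplectic path (both being Fubini--Study up to the bundle isomorphism), and adding the large fixed base term makes the whole family symplectic and cohomologous; then Moser applies. The technical points to check are that the bundle isomorphism from Proposition \ref{mainlemma}(2) genuinely identifies the fiberwise symplectic structures up to a fiberwise symplectic isotopy, and that ``$\kappa\gg0$'' can be made quantitative (it should come out to something like $\kappa$ larger than a constant depending on $r$, $s$, $\ba$, $\bb$, consistent with the $K_{\ba}(s)=\sigma_1(\ba)-s$ thresholds already in play). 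I would not expect to need anything deeper than Moser's theorem plus this explicit coupling construction, but making the nondegeneracy estimate clean is the crux.
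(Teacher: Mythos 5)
Your overall architecture matches the paper's: the reverse direction is exactly the paper's argument (a deformation equivalence gives a diffeomorphism, hence a ring isomorphism $H^*(M_{\bb})\cong H^*(M_{\ba})$, hence condition (3) of Proposition \ref{mainlemma}), and your ``moreover'' paragraph --- couple both fiberwise forms to a large multiple of $\pi^*(\omega_s)$ so that the linear interpolation stays nondegenerate and cohomologous, then apply Moser --- is precisely what the paper does for $\kappa\gg 0$.

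The one step that would fail as written is in your forward direction: you assert that the straight-line path $(1-t)\omega_{\ba}^\kappa+t\,\phi^*\omega_{\bb}^\kappa$ at \emph{fixed} $\kappa$ ``can be taken symplectic, perhaps after a small perturbation within the cohomology class.'' If that were true, Moser would immediately give a symplectomorphism $(M_{\ba},\omega_{\ba}^\kappa)\cong(M_{\bb},\omega_{\bb}^\kappa)$ for every admissible $\kappa$, which is exactly the statement the paper says is \emph{not} known for $s>1$ (it is the content of Theorem \ref{symplectomorphismtheorem} for $s=1$ only). The fix is to use the freedom built into the paper's definition of deformation equivalence: the path is allowed to leave the cohomology class as long as it starts and ends in it. Concretely, the paper's path has three stages: first inflate by $\omega_{\ba}^\kappa+Kt\,\pi^*(\omega_s)$ (which by Lemma \ref{bundlecorrespondencetheorem} is just $\omega_{\ba}^{\kappa+(s+1)tK}$, hence symplectic), then interpolate linearly between $\omega_{\ba}^\kappa+K\pi^*(\omega_s)$ and $\phi^*(\omega_{\bb}^\kappa)+K\pi^*(\omega_s)$ for $K$ large (here your Thurston-type nondegeneracy argument is what is needed, using $\pi\circ\phi=\pi$ so that $\phi^*\pi^*(\omega_s)=\pi^*(\omega_s)$), and finally deflate back to $\phi^*(\omega_{\bb}^\kappa)$. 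Since you already have all of these ingredients in your third paragraph, the repair is to move that construction into the forward direction and drop the claim about staying in the cohomology class there; the in-class linear isotopy is then reserved for the $\kappa\gg 0$ symplectomorphism statement, where it is legitimate.
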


Given the class of manifolds $(M_{\ba},\omega^\kappa_{\ba})$, we can ask how many different toric structures we can put on the same deformation class.  Given symplectic toric manifolds $(M,\omega,T)$ and $(M',\omega',T')$, we recall that the toric structures are called \textbf{equivalent} if there is an equivariant symplectomorphism from one to the other, and are called \textbf{inequivalent} otherwise.

The following result uses the fact that two toric manifolds are equivalent if and only if their moment polytopes are affine equivalent, and is proven in section $2$.

\begin{lemma}\label{toricstructurelemma}
Let $\ba=(a_1,\ldots, a_r)$ and $\bb=(b_1,\ldots,b_r)$ have $0\leq a_1\leq\ldots\leq a_r$ and $0\leq b_1\leq\ldots\leq b_r$ and let $\kappa$ and $\kappa'$ be real numbers.  Then $(M_{\ba},\omega^\kappa_{\ba},T_{\ba})$ is equivalent to $(M_{\bb},\omega^{\kappa'}_{\bb},T_{\bb})$ $\iff(\ba;\kappa)=(\bb;\kappa')$.
\end{lemma}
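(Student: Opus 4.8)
The statement asserts that the toric structures $(M_{\ba},\omega^\kappa_{\ba},T_{\ba})$ are parametrized injectively by the tuple $(\ba;\kappa)$. Since we are told that two toric manifolds are equivalent precisely when their moment polytopes are affine equivalent (i.e. related by an element of $GL(n,\bZ)\ltimes \bR^n$), the natural approach is to write down the moment polytope $\Delta_{\ba}^\kappa$ of $(M_{\ba},\omega_{\ba}^\kappa,T_{\ba})$ explicitly as an intersection of half-spaces, and then extract affine invariants of this polytope that recover $\ba$ and $\kappa$. The reverse implication ($(\ba;\kappa)=(\bb;\kappa')\implies$ equivalence) is trivial, so the content is the forward direction.

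First I would record the facet normals and support numbers of $\Delta_{\ba}^\kappa$. Because $M_{\ba}=\bP(L_0\oplus L_{-a_1}\oplus\cdots\oplus L_{-a_r})$ over $\bC P^s$ with the standard torus action, the polytope has $r+1$ facets coming from the fiber $\bC P^r$ directions (a "simplex-like" cluster whose normals are $-e_1,\ldots,-e_r$ and $e_1+\cdots+e_r$ after a suitable choice of coordinates, with the $a_i$ appearing as the shifts/slopes tying the fiber coordinates to the base coordinates) and $s+1$ facets coming from the base $\bC P^s$ directions. Concretely $\Delta_{\ba}^\kappa$ is the set of $(x,y)\in\bR^r\times\bR^s$ with $x_i\ge 0$, $\sum x_i\le r+1$, $y_j\ge 0$, and $\sum_j y_j + \sum_i a_i x_i \le \kappa + (\text{const})$; the normalization $\omega_{\ba}^\kappa(\ell)=r+1$ fixes the fiber simplex to have lattice-edge-length $r+1$. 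The key point is that $a_i$ are encoded as the integers expressing the "tilt" of the top base facet relative to the fiber simplex, and $\kappa$ as the remaining size parameter.

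Next, to prove injectivity I would argue that any affine lattice equivalence $\Delta_{\ba}^\kappa\to\Delta_{\bb}^{\kappa'}$ must respect this fiber/base structure. The cleanest way: identify the fiber $\bC P^r$-simplex facets intrinsically — e.g. as the unique collection of $r+1$ facets whose normals sum to zero and which pairwise meet, or via the combinatorial type (the normal fan of $M_{\ba}$ is determined up to the $\sigma_i$-data of Proposition \ref{mainlemma}, but among the representatives with $0\le a_1\le\cdots\le a_r$ and the given volume normalization the polytope is rigid). An affine lattice map carrying $\Delta_{\ba}^\kappa$ to $\Delta_{\bb}^{\kappa'}$ permutes facets preserving this combinatorial structure, hence sends the fiber simplex to the fiber simplex (preserving its edge length $r+1$, forcing the $\omega(\ell)$ normalizations to match), and then the induced map on the quotient lattice $\bR^s$ sends the base simplex to the base simplex; reading off how the top facet is sheared then gives a permutation $\pi$ and a constant with $a_{\pi(i)} = a'_i$ (up to the shift absorbed by the $C$ of Proposition \ref{mainlemma}), but the ordering constraint $0\le a_1\le\cdots\le a_r$ kills the permutation and, together with the volume normalization (Lemma \ref{volumetheorem}) forcing $\kappa=\kappa'$, pins down $(\ba;\kappa)=(\bb;\kappa')$.

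**Main obstacle.** The delicate step is showing that an arbitrary affine lattice automorphism cannot mix the fiber directions with the base directions in a way that produces a different normalized tuple — i.e., that the fiber $\bC P^r$ sub-simplex is an affine invariant of the polytope. One must rule out "exotic" $GL(r+s,\bZ)$ maps; this is exactly where the hypothesis $0\le a_1\le\cdots\le a_r$ and the volume/fiber normalizations do the work, and where I would lean on Proposition \ref{mainlemma}(4): any two such polytopes that are even diffeomorphic as bundles already satisfy the $\sigma_i$-relations, so the remaining task is the finite check that, within one such diffeomorphism class, the extra rigidity of the moment polytope (as opposed to just the smooth bundle) together with the ordering normalization leaves only $(\ba;\kappa)$ itself. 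I expect this to reduce to a short combinatorial argument about the normal fan, but it is the one place requiring genuine care rather than routine computation.
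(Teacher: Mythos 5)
Your overall framing coincides with the paper's: the reverse implication is immediate, the forward one reduces via Delzant's theorem to showing that $\Delta_{\ba}^{\kappa}$ and $\Delta_{\bb}^{\kappa'}$ are affinely equivalent only if $(\ba;\kappa)=(\bb;\kappa')$, and the volume formula of Lemma \ref{volumetheorem} forces $\kappa=\kappa'$ exactly as you say. The problem is the second half. You correctly isolate the crux --- that an affine lattice equivalence must not be able to scramble the fiber and base data into a different normalized tuple --- but you then only describe a plan (``identify the fiber simplex intrinsically,'' ``read off the shear of the top facet,'' ``I expect this to reduce to a short combinatorial argument about the normal fan'') without executing any of it. Since recovering $\ba$ is the entire content of the lemma beyond the volume count, this is a genuine gap, not a routine omission: as written, nothing in your argument actually produces an affine invariant that distinguishes $\ba=(1,4,4)$ from $\bb=(2,2,5)$, say, even though Proposition \ref{mainlemma} makes the underlying bundles isomorphic.

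The device the paper uses to close this is different from the normal-fan bookkeeping you were contemplating, and simpler. By Remark \ref{bundlerescalingremark}, $\Delta_{\ba}^{\kappa}$ contains exactly $r+1$ distinguished $s$-dimensional faces, namely the rescaled copies of $\Delta_s$ sitting over the $r+1$ vertices of the base copy of $\Delta_r$; their edge lengths are $\kappa+s-\sigma_1(\ba)+a_i$ for $i=0,\dots,r$ (with $a_0=0$), so their volumes are affine invariants which, once $\kappa$ is known, recover the multiset $\{0,a_1,\dots,a_r\}$; the normalization $0\le a_1\le\dots\le a_r$ then pins down $\ba$. In other words the invariant is a list of face volumes, not a factorization of the facet-normal matrix. (Even this argument silently assumes that the affine equivalence matches these $r+1$ faces with the corresponding faces of $\Delta_{\bb}^{\kappa'}$ rather than with other $s$-faces of the combinatorial type $\Delta_r\times\Delta_s$ --- precisely the mixing issue you flagged --- so your instinct about where the danger lies is sound; but a proof must then actually rule it out, e.g.\ by a combinatorial identification of these faces, rather than defer it.)
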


\begin{question}Given a tuple $(\ba;\kappa)$, what is $N_n(\ba;\kappa)$, the number of inequivalent toric structures on the deformation class of $(M^{2n}_{\ba},\omega_\kappa^{\ba})$?  In particular, for fixed $\ba$ and $n$, how does it depend on $\kappa$, and for which $(\ba;\kappa)$ do we have $N_n(\ba;\kappa)=1$?
\end{question}

If $\ba=0$, the manifold $M_{\ba}$ is just a product $\bC P^r\times\bC P^s$ and this question was answered in \cite{D} as follows.

\begin{proposition}\label{producttheorem}(\cite{D}, Prop $1.8$) Let $(M,\omega)=(\bC P^r\times\bC P^s,\omega_r\times\lambda\omega_s)$.  Then if either $r,s\geq 2$, or $r>s\geq 1$ and $\lambda\leq 1$, or $r=s=1$ and $\lambda=1$, there is a unique toric structure compatible with this symplectic structure.  In all other cases, the toric structure is not unique.
\end{proposition}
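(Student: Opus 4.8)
The plan is to combine the classification in Theorem~\ref{toricclassifyingtheorem} with the ring/bundle dictionary of Proposition~\ref{mainlemma} and then analyze the resulting numerical constraints. Write $\omega=\omega_r\times\lambda\omega_s$. By Theorem~\ref{toricclassifyingtheorem} every toric structure on $(\bC P^r\times\bC P^s,\omega)$ is equivariantly symplectomorphic to exactly one $(M_\bb,\omega_\bb^{\kappa'},T_\bb)$, and by Lemma~\ref{toricstructurelemma} distinct normalized tuples give inequivalent structures; since a product has no preferred fibre, I must run this both for $\bC P^r$-bundles over $\bC P^s$ (tuples $\bb\in\bZ^r$) and for $\bC P^s$-bundles over $\bC P^r$ (tuples $\bc\in\bZ^s$), noting that the product itself is the single structure recorded by $\bb=0$ and by $\bc=0$. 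So counting toric structures reduces to counting tuples whose associated toric manifold is symplectomorphic to $(\bC P^r\times\bC P^s,\omega)$.

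First I would extract the cohomological constraint. If $(M_\bb,\omega_\bb^{\kappa'})$ is symplectomorphic to the product then $H^*(M_\bb;\bZ)\cong H^*(\bC P^r\times\bC P^s;\bZ)$ as rings, so Proposition~\ref{mainlemma} provides $C\in\bZ$ with $\sigma_i(C,\dots,C)=\sigma_i(0,\bb)$ for $1\le i\le\min\{r+1,s\}$; comparing $\sigma_1$ gives $(r+1)C=\sigma_1(\bb)\ge 0$, so $C\ge 0$. If $s\ge r+1$ the index $i=r+1$ occurs and forces $C^{r+1}=\sigma_{r+1}(0,\bb)=0$, hence $C=0$ and $\bb=0$. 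If $2\le s\le r$ the indices $i=1,2$ give $(r+1)C=\sigma_1(\bb)$ and $\binom{r+1}{2}C^2=\sigma_2(\bb)$; Maclaurin's inequality for the nonnegative reals $b_1,\dots,b_r$ yields $\sigma_2(\bb)\le\frac{r-1}{2r}\sigma_1(\bb)^2$, and substituting gives $\frac{r(r+1)}{2}C^2\le\frac{r-1}{2r}(r+1)^2C^2$, i.e. $r^2\le r^2-1$ unless $C=0$; so again $\bb=0$. The identical argument, with $r$ and $s$ interchanged, applies to the tuples $\bc$. Hence if $r\ge 2$ and $s\ge 2$ the only admissible tuples are $\bb=0$ and $\bc=0$, which describe the same toric structure; matching symplectic classes, equivalently volumes by Lemma~\ref{volumetheorem}, then pins down $\kappa'$, and uniqueness follows. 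This settles the case $r,s\ge 2$.

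It remains to treat $\min\{r,s\}=1$. Consider first $s=1$, so that only the constraint $i=1$ survives and the admissible tuples are exactly those $\bb$ with $(r+1)\mid\sigma_1(\bb)$, say $\sigma_1(\bb)=(r+1)C$ with $C\ge 0$; for any such $\bb$, Proposition~\ref{mainlemma} makes $M_\bb$ isomorphic to the product as a symplectic bundle, so by Theorem~\ref{symplectomorphismtheorem} (valid over $\bC P^1$) it is symplectomorphic to the product for the value of $\kappa'$ that matches the symplectic classes, provided that value exceeds $K_\bb(1)=\sigma_1(\bb)-1$. Using Lemma~\ref{volumetheorem} to solve $\Vol(M_\bb,\omega_\bb^{\kappa'})=\Vol(\bC P^r\times\bC P^1,\omega)$ for $\kappa'$ in terms of $\lambda$ and $C$ and then comparing with the threshold $\sigma_1(\bb)-1$, one should find that for $C=0$ the condition always holds, that for $r\ge 2$ it holds with some $C\ge 1$ (and a corresponding $\bb\ne 0$, e.g. $\bb=(0,\dots,0,r+1)$) precisely when $\lambda>1$, and that for $r=s=1$ it holds with some $C\ge 1$ exactly when $\lambda\ne 1$. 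This gives uniqueness for $r>s=1$ with $\lambda\le 1$ and for $r=s=1$ with $\lambda=1$, and non-uniqueness in the remaining subcases. The last possibility $r=1<s$ is handled by realizing the product as a $\bC P^s$-bundle over $\bC P^1$ and running the same $s=1$ analysis for $\bc\in\bZ^s$, while the $\bC P^1$-bundle-over-$\bC P^s$ description contributes only $\bc=0$ by the previous paragraph, so nothing is double counted.

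The main obstacle is the explicit computation in the third step: writing down $\Vol(M_\bb,\omega_\bb^{\kappa'})$ via Lemma~\ref{volumetheorem}, solving for $\kappa'$, and comparing with $K_\bb(1)=\sigma_1(\bb)-1$. This is exactly where the constant ``$1$'' in the hypothesis ``$\lambda\le 1$'' enters, and it demands careful bookkeeping of normalizations --- which factor plays the role of the fibre, the factor $r+1$ in the normalization $\omega(\ell)=r+1$, and the shift of the section class produced by the twist in Proposition~\ref{mainlemma}. A secondary, more routine point is to check that in each non-uniqueness case the witness tuple really yields a realizable toric structure, i.e. that its $\kappa'$ clears the threshold, which reduces to the same bookkeeping.
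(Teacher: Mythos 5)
This proposition is quoted from \cite{D} (Prop.\ 1.8) and the paper gives no proof of it, so there is no internal argument to compare against; I am therefore judging your proposal on its own terms. Your reduction via Theorem \ref{toricclassifyingtheorem}, Lemma \ref{toricstructurelemma} and Proposition \ref{mainlemma} is the right frame, and your treatment of the case $r,s\ge 2$ is complete and correct: comparing $\sigma_1$ and $\sigma_2$ and invoking Maclaurin's inequality $\sigma_2(\bb)\le\tfrac{r-1}{2r}\sigma_1(\bb)^2$ forces $C=0$ and hence $\bb=0$, and the same for $\bc$. This is in fact a cleaner, self-contained version of what the paper's Lemma \ref{finitenesslemma} and Theorem \ref{monotoneuniquenesstheorem} accomplish in greater generality, and it buys you the $r,s\ge2$ half of the statement with no case analysis.

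The genuine gap is the half where the proposition has all of its content, namely $\min\{r,s\}=1$: you reduce it to "solve for $\kappa'$ via Lemma \ref{volumetheorem} and compare with $K_{\bb}(1)=\sigma_1(\bb)-1$, and one should find the threshold is $\lambda=1$," but you never perform that computation, and it is not a formality. Concretely, the threshold depends on the normalization of $\omega_r$ and $\omega_s$: in the convention of \cite{D} (lines in both factors have equal area) the smallest fiber of the candidate polytope for $\sigma_1(\bb)=(r+1)C$ has affine length $\lambda-C$, giving the stated threshold $\lambda>1$; but under this paper's internal normalization $\omega_n(\ell)=n+1$ the same computation gives positivity of the smallest fiber iff $2\lambda>(r+1)C$, i.e.\ a threshold of $\lambda>(r+1)/2$ for $r>s=1$. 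So until the bookkeeping is actually done in a fixed convention, the claim "uniqueness iff $\lambda\le1$" is not established, and with the wrong convention it is false as stated. Two smaller points to repair in the same step: Theorem \ref{symplectomorphismtheorem} as stated compares $(M_{\ba},\omega_{\ba}^{\kappa})$ and $(M_{\bb},\omega_{\bb}^{\kappa})$ for the \emph{same} $\kappa$, so you should check that matching cohomology classes forces equal $\kappa$ before invoking it; and for $r=s=1$ the witness for $\lambda<1$ comes from the \emph{other} fibration (fiber the large sphere), which your phrasing "holds with some $C\ge1$ exactly when $\lambda\ne1$" elides. Finally, note that the quoted statement implicitly assumes $r\ge s$ (otherwise "all other cases" would wrongly include, e.g., $r=1<s$ with $\lambda$ large); your plan of swapping the roles of the factors for $r=1<s$ is the right fix, but you should state the conclusion it yields.
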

In light of this proposition, we will focus on the case where $\ba\neq 0$, and hence assume some $a_i\neq 0$.
We now provide a summary of our results, to be proven in section $4$.  Our results are complete when $r<s$, but there are still some open questions for $r\geq s$.  Recall that $r$ is the dimension of the fiber and $s$ is the dimension of the base.

The first main theorem is a uniqueness result.
\begin{theorem}\label{r<stheorem}Let $(M^{2n},\omega_\kappa^a)$ be determined by $\ba=(a_1,\ldots,a_r)$ and $\kappa$, as before.  If $r<s$, we have
\begin{equation*}
N(\ba;\kappa)= \left\{
\begin{array}{ll}
0 & \text{if } \kappa\leq K_{\ba}(s)\\
1 & \text{if } \kappa> K_{\ba}(s)
\end{array} \right.
\end{equation*}
where $K_{\ba}(s):=\sigma_1(\ba)-s$.
\end{theorem}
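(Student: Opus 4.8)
The plan is to reduce the counting of toric structures to the purely combinatorial question governed by Proposition \ref{mainlemma}(4), and then to show that when $r<s$ the only solution is the trivial one. By Theorem \ref{toricclassifyingtheorem} and Lemma \ref{toricstructurelemma}, a toric structure on the deformation class of $(M_{\ba},\omega_{\ba}^\kappa)$ compatible with the symplectic form corresponds to a tuple $(\bb;\kappa')$ with $0\leq b_1\leq\ldots\leq b_r$ such that $(M_{\bb},\omega_{\bb}^{\kappa'})$ is deformation equivalent to $(M_{\ba},\omega_{\ba}^\kappa)$; by Lemma \ref{deformationequivalentlemma} this forces $M_{\bb}\cong M_{\ba}$ as a symplectic bundle, and then by Proposition \ref{mainlemma} the cohomology rings agree and there is an integer $C$ with $\sigma_i(C,\ba+C)=\sigma_i(0,\bb)$ for $1\leq i\leq\min\{r+1,s\}$. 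Since $r<s$ we have $\min\{r+1,s\}=r+1$, so \emph{all} $r+1$ elementary symmetric functions of the two $(r+1)$-tuples coincide. Hence the existence of a competing toric structure is exactly the existence of $C\in\bZ$ with $\{C, a_1+C,\ldots,a_r+C\}=\{0,b_1,\ldots,b_r\}$ as multisets.

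The next step is the elementary observation that equality of all $r+1$ elementary symmetric functions means the two $(r+1)$-tuples are equal as multisets (they are the roots, with multiplicity, of the same monic degree-$(r+1)$ polynomial). So $(b_0,b_1,\ldots,b_r)$, after sorting, must be the sorted version of $(C, a_1+C,\ldots,a_r+C)$. The normalization $a_0=0\leq a_1\leq\ldots\leq a_r$ and the corresponding normalization $b_0=0\leq b_1\leq\ldots\leq b_r$ then pin everything down: the smallest entry of $\{C,a_1+C,\ldots,a_r+C\}$ is $C$ (since the $a_i\geq 0$), and this must equal $b_0=0$, so $C=0$ and therefore $\bb=\ba$. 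Then $\sigma_1(\ba)=\sigma_1(\bb)$ forces $\kappa=\kappa'$ via the volume normalization (Lemma \ref{volumetheorem}); alternatively, once $\bb=\ba$, Lemma \ref{toricstructurelemma} directly gives $\kappa=\kappa'$. This shows $N(\ba;\kappa)=1$ whenever a toric structure exists at all.

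Finally I would address the range of $\kappa$. A toric structure of the form $(M_{\ba},\omega_{\ba}^\kappa,T_{\ba})$ exists precisely when $\kappa>K_{\ba}(s)=\sigma_1(\ba)-s$, by Lemma \ref{bundlecorrespondencetheorem} (this is the condition for the associated polytope to be nonempty / for the line bundle sum to admit the requisite symplectic form). Hence for $\kappa\leq K_{\ba}(s)$ there are no such toric manifolds, so $N(\ba;\kappa)=0$; for $\kappa>K_{\ba}(s)$ there is exactly one by the previous paragraph. Combining the two cases yields the stated formula.

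The main obstacle is not in the combinatorics — which is genuinely routine once $\min\{r+1,s\}=r+1$ is invoked — but in cleanly chaining together the equivalences: one must be careful that "toric structure compatible with $\omega$ on the deformation class" really does reduce, via Lemma \ref{deformationequivalentlemma}, to symplectic-bundle isomorphism, and hence to Proposition \ref{mainlemma}(4), with the index range $1\le i\le\min\{r+1,s\}$ used in its full strength. The case $r<s$ is exactly the regime where that range captures all symmetric functions, which is what makes the rigidity work; for $r\geq s$ one loses the top symmetric functions and the argument breaks, consistent with the fact that the theorem is only claimed for $r<s$.
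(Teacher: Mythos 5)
Your proposal is correct and follows essentially the same route as the paper: reduce to Proposition \ref{mainlemma}(4), note that $r<s$ gives $\min\{r+1,s\}=r+1$ so all elementary symmetric functions agree and the two $(r+1)$-tuples coincide as multisets, and then use non-negativity of the $a_i$ and $b_i$ to force $C=0$ and $\bb=\ba$. The only cosmetic difference is how $C\neq 0$ is excluded (you compare minima of the multisets; the paper uses $\sigma_{r+1}(0,\bb)=0$ to deduce $C=-a_i<0$ and then contradicts non-negativity), which amounts to the same observation.
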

This gives a complete characterization for $N_n(\ba;\kappa)$ with $r<s$ and has the following interesting corollary.
\begin{corollary}Let $(M,\omega,T)$ and $(M',\omega',T')$ be toric $\bC P^r$ bundles over $\bC P^s$ with $r<s$.  If $(M,\omega)$ is deformation equivalent to $(M',\omega')$, then $(M,\omega,T)$ is equivariantly symplectomorphic to $(M',\omega',T')$.
\end{corollary}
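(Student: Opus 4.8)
The plan is to read this off from Theorem~\ref{r<stheorem}. By Theorem~\ref{toricclassifyingtheorem} there are unique tuples $(\ba;\kappa)$ and $(\bb;\kappa')$, with $0\le a_1\le\cdots\le a_r$, $0\le b_1\le\cdots\le b_r$ and (by the discussion preceding that theorem) $\kappa>K_{\ba}(s)$ and $\kappa'>K_{\bb}(s)$, such that $(M,\omega,T)$ is equivariantly symplectomorphic to $(M_{\ba},\omega_{\ba}^\kappa,T_{\ba})$ and $(M',\omega',T')$ to $(M_{\bb},\omega_{\bb}^{\kappa'},T_{\bb})$. An equivariant symplectomorphism is in particular a symplectomorphism, hence a deformation equivalence (take the constant path of forms), and deformation equivalence in the sense used here is transitive: if $\phi_1\colon M_1\to M_2$ and $\phi_2\colon M_2\to M_3$ realize deformation equivalences, then $(\phi_2\circ\phi_1)^*[\omega_3]=\phi_1^*[\omega_2]=[\omega_1]$, and concatenating $\phi_1^*(\text{path on }M_2)$ with the path on $M_1$ gives a path of symplectic forms from $(\phi_2\circ\phi_1)^*\omega_3$ to $\omega_1$. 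Hence the hypothesis $(M,\omega)\sim(M',\omega')$ yields $(M_{\ba},\omega_{\ba}^\kappa)\sim(M_{\bb},\omega_{\bb}^{\kappa'})$.

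Now both $(M_{\ba},\omega_{\ba}^\kappa,T_{\ba})$ and $(M_{\bb},\omega_{\bb}^{\kappa'},T_{\bb})$ are toric structures on the (common) deformation class of $(M_{\ba},\omega_{\ba}^\kappa)$. Since $r<s$ and $\kappa>K_{\ba}(s)$, Theorem~\ref{r<stheorem} gives $N(\ba;\kappa)=1$, so any two toric structures on this deformation class are equivalent; therefore $(M_{\ba},\omega_{\ba}^\kappa,T_{\ba})$ and $(M_{\bb},\omega_{\bb}^{\kappa'},T_{\bb})$ are equivariantly symplectomorphic. Composing with the equivalences of the first paragraph proves the corollary.

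With Theorem~\ref{r<stheorem} in hand the argument is essentially an unwinding of definitions, so there is no serious obstacle; the only points needing a word of justification are the transitivity of deformation equivalence under the cohomology constraint and the fact that a toric structure transports across any diffeomorphism pulling back the symplectic form (conjugating a Hamiltonian torus action by such a map again produces a symplectic toric manifold). If one instead wants a direct proof not invoking Theorem~\ref{r<stheorem}, one can proceed as follows: deformation equivalence forces $H^*(M_{\ba};\bZ)\cong H^*(M_{\bb};\bZ)$, so Proposition~\ref{mainlemma} produces $C\in\bZ$ with $\sigma_i(C,C+\ba)=\sigma_i(0,\bb)$ for $1\le i\le\min\{r+1,s\}=r+1$ (here $r<s$ is exactly what makes the range the full $\{1,\dots,r+1\}$); two $(r+1)$-tuples with equal elementary symmetric functions $\sigma_1,\dots,\sigma_{r+1}$ are permutations of each other, and both $(C,C+a_1,\dots,C+a_r)$ and $(0,b_1,\dots,b_r)$ are already nondecreasing, forcing $C=0$ and $\ba=\bb$; finally the identity $\phi^*[\omega']=[\omega]$ and the fact that such a $\phi$ is orientation preserving (all forms in a deformation family induce the same orientation) give $\Vol(M,\omega)=\Vol(M',\omega')$, whence $\kappa=\kappa'$ by Lemma~\ref{volumetheorem}, so $(M,\omega,T)$ and $(M',\omega',T')$ are identified with the same model.
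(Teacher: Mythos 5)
Your main argument is correct and is essentially the paper's own proof: reduce to the models $(M_{\ba},\omega_{\ba}^\kappa,T_{\ba})$ via Theorem \ref{toricclassifyingtheorem} and then invoke $N_n(\ba;\kappa)=1$ from Theorem \ref{r<stheorem}, with your remarks on transitivity of deformation equivalence just making explicit what the paper leaves implicit. The alternative direct argument you sketch via Proposition \ref{mainlemma} (equal $\sigma_1,\dots,\sigma_{r+1}$ forcing $C=0$, $\ba=\bb$, and the volume identity forcing $\kappa=\kappa'$) is also sound and is in fact how the paper proves Theorem \ref{r<stheorem} itself.
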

\begin{proof}This follows directly from the fact that if $r<s$, then $N_n(\ba,\kappa)=1$ for all $\kappa>K_{\ba}(s)$.  Indeed, the formula $N_n(\ba,\kappa)=1$ implies that for a fixed tuple $(\ba;\kappa)$, any manifold $(M,\omega,T)$ so that $(M,\omega)$ is deformation equivalent to $(M_{\ba},\omega_{\ba}^\kappa)$ is equivariantly symplectomorphic to $(M_{\ba},\omega_{\ba}^\kappa,T_{\ba})$.
\end{proof}

The case where $r\geq s$ is more complicated, as the next example shows.
\begin{example}Let $r,s=3,2$ and take $\ba=(1,4,4)$ and $\bb=(2,2,5)$. For sufficiently large $\kappa$, both $M_{\ba}^\kappa$ and $M_{\bb}^\kappa$ describe $\bC P^3$ bundles over $\bC P^2$ which are deformation equivalent, by Proposition \ref{mainlemma}.  However, these are obviously not the same toric manifold by Lemma \ref{toricstructurelemma} since $\ba\neq \bb$.  In fact, we show in Example \ref{r=3s=2example} that
\begin{equation*}
N_5(\ba;\kappa)= \left\{
\begin{array}{ll}
0 & \text{if } \kappa\leq 7\\
2 & \text{if } \kappa>7
\end{array} \right.
\end{equation*}
so that there is no choice of $\kappa$ for which $N_5(\ba;\kappa)=1$.
\end{example}

We will give more specific examples of the $r\geq s$ case at the end of section $4$.

Even though there is not a general uniqueness theorem for the $r\geq s$ case, there are still some uniqueness results.  In particular, by restricting the size of $\kappa$, we have the following theorem.
\begin{theorem}\label{monotoneuniquenesstheorem}Let $\ba$ and $\kappa$ be as before with the added assumption that $\kappa\leq 1$.  We have
\begin{equation*}
N_n(\ba;\kappa)= \left\{
\begin{array}{ll}
0 & \text{if } \kappa\leq K_{\ba}(s)\\
1 & \text{if } K_{\ba}(s)<\kappa\leq 1.
\end{array} \right.
\end{equation*}
\end{theorem}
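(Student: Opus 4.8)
The plan is to reduce the uniqueness statement for $\kappa\leq 1$ to a combinatorial question about the equation in part (4) of Proposition \ref{mainlemma}, and then to show that, under the monotonicity-type constraint $\kappa\leq 1$, the symmetric-function identities force $\bb=\ba$ (up to reordering).

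First I would set up the counting: by Theorem \ref{toricclassifyingtheorem} and Lemma \ref{toricstructurelemma}, $N_n(\ba;\kappa)$ equals the number of tuples $\bb=(b_1,\ldots,b_r)$ with $0\leq b_1\leq\cdots\leq b_r$ such that $(M_{\bb},\omega_{\bb}^{\kappa})$ is deformation equivalent to $(M_{\ba},\omega_{\ba}^{\kappa})$ (the same $\kappa$ because volume is a diffeomorphism invariant, by Lemma \ref{volumetheorem}). By Lemma \ref{deformationequivalentlemma}, such a deformation equivalence is the same as an isomorphism of symplectic bundles, and by Proposition \ref{mainlemma} (2)--(4) this in turn is equivalent to the existence of $C\in\bZ$ with $\sigma_i(C,\ba+C)=\sigma_i(0,\bb)$ for $1\leq i\leq\min\{r+1,s\}$. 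So the first task is: classify the solutions $(\bb,C)$ of this system, and show there is exactly one valid $\bb$ when $\kappa>K_{\ba}(s)$ is additionally $\leq 1$, and none when $\kappa\leq K_{\ba}(s)$. The "$\kappa\leq K_{\ba}(s)\Rightarrow N=0$" half is immediate from the fact (stated before Theorem \ref{toricclassifyingtheorem}) that $M_{\ba}$ only admits the described toric structure for $\kappa>K_{\ba}(s)$, so the content is the "$=1$" half.

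Next I would bring in the constraint $\kappa\leq 1$. The role of $\kappa$ is to pin down which $C$ can occur: admissibility of the tuple $(\bb;\kappa)$ requires $\kappa>K_{\bb}(s)=\sigma_1(\bb)-s$, and the bundle isomorphism shifts $\kappa$ in a controlled way (the shift is governed by $C$ and the normalization $\omega(\ell)=r+1$; concretely one expects a relation of the form $\kappa_{\bb}=\kappa_{\ba}+(r+1)C$ or similar coming from how $[\omega]$ transforms under $\bP(E)\cong\bP(E\otimes L_C)$). Combining $K_{\bb}(s)<\kappa\leq 1$ with $K_{\ba}(s)<\kappa\leq 1$ bounds both $\sigma_1(\ba)$ and $\sigma_1(\bb)$ by $s+1$, and together with the first symmetric-function identity $\sigma_1(C,\ba+C)=\sigma_1(0,\bb)$, i.e. $(r+1)C+\sigma_1(\ba)=\sigma_1(\bb)$, this squeezes $C$ into a very short range. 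I expect that the only integer value of $C$ surviving all the inequalities is $C=0$, and then $\sigma_i(0,\ba)=\sigma_i(0,\bb)$ for all $i$ in range forces the multiset $\{a_1,\ldots,a_r\}=\{b_1,\ldots,b_r\}$ hence $\ba=\bb$ after sorting. (One must be slightly careful when $s\leq r$, since then only finitely many $\sigma_i$ agree; but the extra inequality $0\leq b_1\leq\cdots\leq b_r$ plus $\sigma_1=\sigma_1$ and the smallness of $\sigma_1$ should still rule out distinct sorted tuples — e.g. all $b_i$ lie in a bounded range and low-order symmetric functions of a short nonnegative integer vector already determine it when the sum is this small.)

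The main obstacle, I expect, is the bookkeeping in the previous paragraph: getting the precise affine relationship between $\kappa$, $C$, and the normalization, and then checking that the inequality $\kappa\leq 1$ (rather than some other threshold) is exactly what kills every $C\neq 0$. This requires carefully tracking how the symplectic class and the constant $K_{\bb}(s)=\sigma_1(\bb)-s$ transform under the bundle isomorphism $\bP(L_0\oplus\bigoplus L_{-a_i})\cong\bP(L_0\oplus\bigoplus L_{-b_i})$, which is where the real input of Proposition \ref{mainlemma} and Lemma \ref{deformationequivalentlemma} enters. A secondary subtlety is the boundary case $r\geq s$ with few symmetric-function constraints, where one cannot simply invoke "equal elementary symmetric functions $\Rightarrow$ equal multiset"; here I would argue directly from $0\leq b_i$ and the bound $\sigma_1(\bb)\leq s+1<\infty$ that the sorted tuple is uniquely reconstructible. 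Once $C=0$ and $\bb=\ba$ are forced, uniqueness — and hence $N_n(\ba;\kappa)=1$ for $K_{\ba}(s)<\kappa\leq 1$ — follows, completing the proof.
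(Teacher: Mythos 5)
Your overall route is the same as the paper's: reduce to Proposition \ref{mainlemma}(4), use $K_{\ba}(s)<\kappa\le 1$ and $K_{\bb}(s)<\kappa\le 1$ to bound $\sigma_1(\ba),\sigma_1(\bb)\le s$, deduce $C=0$ from $(r+1)C=\sigma_1(\bb)-\sigma_1(\ba)$ with $|\sigma_1(\bb)-\sigma_1(\ba)|<r+1$, and then conclude $\ba=\bb$. (Your aside about a shift $\kappa_{\bb}=\kappa_{\ba}+(r+1)C$ is a red herring --- both structures live at the same $\kappa$ since $\kappa$ is fixed by the volume; the only constraint on $\bb$ is $\kappa>K_{\bb}(s)$, which is what you end up using anyway.)

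The one genuine gap is the final step in the case $r\ge s$, which you flag as a ``secondary subtlety'' and justify by saying that bounded nonnegative entries plus the low-order symmetric functions ``already determine'' the sorted tuple. That reason is not sufficient as stated: the paper's own Example \ref{r=3s=2example} exhibits $(1,4,4)$ and $(2,2,5)$ with $r=3$, $s=2$, two distinct sorted nonnegative integer vectors with equal $\sigma_1$ and $\sigma_2$ and entries bounded by $5$. So agreement of $\sigma_1,\dots,\sigma_s$ for bounded vectors does not in general force equality, and this is precisely why $N_n$ can exceed $1$ for larger $\kappa$. What saves the day here is the specific bound $\sigma_1(\ba)=\sigma_1(\bb)\le s$: a nonnegative integer vector with $\sigma_1\le s$ has at most $s$ nonzero entries, hence $\sigma_i=0$ automatically for all $i>s$. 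Therefore agreement of $\sigma_1,\dots,\sigma_s$ already implies agreement of \emph{all} elementary symmetric functions, and only then does the multiset identity $\{a_i\}=\{b_i\}$, hence $\ba=\bb$ after sorting, follow. With that observation inserted, your argument closes and coincides with the paper's proof.
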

This has an interesting application.  Recall that we call a symplectic manifold monotone if we have $[\omega]=k[c_1(M)]$, for some positive constant $k$ which is usually normalized to equal $1$.  Our notation is chosen so that we have the following, as in Remark $2.2(i)$ of \cite{D}.
\begin{lemma}
$(M_{\ba},\omega^\kappa_{\ba})$ is monotone $\iff$ $\kappa=1$.
\end{lemma}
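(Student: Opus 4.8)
The plan is to express both $[\omega_{\ba}^\kappa]$ and $c_1(M_{\ba})$ in a fixed basis of the rank-two space $H^2(M_{\ba};\bR)$ and compare. For the basis I would use $x$, the pullback under the projection $\pi\colon M_{\ba}\ra\bC P^s$ of the positive generator of $H^2(\bC P^s)$, and $y=c_1(\mathcal O_{\bP(E)}(1))$, the fiberwise hyperplane class, where $E=L_0\oplus L_{-a_1}\oplus\cdots\oplus L_{-a_r}$. Since $\dim H^2(M_{\ba})=2$ and $x,y$ are independent --- $x$ restricts to $0$ on a fiber line $\ell$ whereas $\langle y,\ell\rangle=1$ --- the pair $\{x,y\}$ is a basis, and pairing any class against $\ell$ reads off its $y$-coefficient.

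First I would compute $c_1(M_{\ba})$ from the extension $0\ra T_{M_{\ba}/\bC P^s}\ra TM_{\ba}\ra\pi^*T\bC P^s\ra 0$ together with the relative Euler sequence $0\ra\mathcal O\ra\pi^*E^*\otimes\mathcal O_{\bP(E)}(1)\ra T_{M_{\ba}/\bC P^s}\ra0$; since $c_1(T\bC P^s)=(s+1)x$ and $c_1(E)=-\sigma_1(\ba)x$, this yields $c_1(M_{\ba})=(r+1)y+(s+1+\sigma_1(\ba))x$. Next, on the symplectic side the normalization $\omega_{\ba}^\kappa(\ell)=r+1$ together with $\langle y,\ell\rangle=1$, $\langle x,\ell\rangle=0$ already forces the $y$-coefficient of $[\omega_{\ba}^\kappa]$ to equal $r+1$; and by the description of the parameter $\kappa$ fixed in Section 2 (Lemma \ref{volumetheorem}, Definition \ref{bundledefinition}) the $x$-coefficient is an injective affine function $\mu(\kappa)$ of $\kappa$, with $\mu(1)=s+1+\sigma_1(\ba)$ after unwinding the conventions. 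So $[\omega_{\ba}^\kappa]=(r+1)y+\mu(\kappa)x$.

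With both classes in hand the conclusion is immediate: if $(M_{\ba},\omega_{\ba}^\kappa)$ is monotone, say $[\omega_{\ba}^\kappa]=k\,c_1(M_{\ba})$ with $k>0$, then comparing $y$-coefficients (both equal $r+1\ne 0$) forces $k=1$, and then comparing $x$-coefficients gives $\mu(\kappa)=s+1+\sigma_1(\ba)=\mu(1)$, hence $\kappa=1$ by injectivity of $\mu$; conversely $\kappa=1$ gives $[\omega_{\ba}^\kappa]=c_1(M_{\ba})$ directly. The only delicate point is bookkeeping rather than mathematics: one must pin down the sign convention for $y$ (the bundle $\mathcal O_{\bP(E)}(1)$ versus the dual of the tautological subbundle) and the orientation of $\ell$ so that the $y$-coefficients of $[\omega_{\ba}^\kappa]$ and $c_1(M_{\ba})$ genuinely coincide and $\mu(1)$ comes out equal to $s+1+\sigma_1(\ba)$; once these are aligned with the conventions of Section 2, the argument reduces to the coefficient comparison above.
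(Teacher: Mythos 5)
The paper does not actually prove this lemma --- it simply cites Remark $2.2(i)$ of \cite{D} and notes that the normalization of $\kappa$ was chosen to make the statement hold --- so there is no internal proof to compare against; your self-contained argument is therefore a genuine addition rather than a variant. Its skeleton is exactly right: in the basis $\{x,y\}$ of $H^2(M_{\ba};\bR)$ the fiber line $\ell$ forces the $y$-coefficients of $[\omega_{\ba}^\kappa]$ and $c_1(M_{\ba})$ to agree (both are $r+1$), hence $k=1$ in any monotonicity relation, and by Lemma \ref{bundlecorrespondencetheorem} the increment $[\omega_{\ba}^{\kappa+K}]-[\omega_{\ba}^\kappa]=\tfrac{K}{s+1}[\pi^*\omega_s]=Kx$ shows the $x$-coefficient $\mu(\kappa)$ is affine of slope $1$, hence injective, so at most one $\kappa$ can be monotone.

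The one place where the content of the lemma actually lives is the equality $\mu(1)=(x\text{-coefficient of }c_1(M_{\ba}))$, and this you assert rather than verify; as written the two sides are computed in incompatible conventions (with $y$ the dual of the tautological subbundle of $\pi^*E$, i.e.\ the convention matching $\bP(L_0\oplus\cdots\oplus L_{-a_r})$, the relative Euler sequence reads $0\ra\mathcal{O}\ra\pi^*E\otimes\mathcal{O}(1)\ra T_{M_{\ba}/\bC P^s}\ra 0$, giving $c_1(M_{\ba})=(r+1)y+(s+1-\sigma_1(\ba))x$, not $+\sigma_1(\ba)$; your formula belongs to the dual convention, under which $\mu(1)$ also flips sign, so the conclusion survives but the displayed values do not match either convention simultaneously unless one tracks this). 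The cleanest way to close the gap using only the paper's data is to evaluate both classes on the $r+1$ invariant copies of $\bC P^s$ sitting over the vertices of the base $\Delta_r$: by Remark \ref{bundlerescalingremark} a line $A_i$ in the face over $v_i$ has $\omega_{\ba}^\kappa(A_i)=\kappa+s-\sigma_1(\ba)+(r+1)a_i$ (with $a_0=0$), while adjunction or the Euler sequence gives $\langle c_1(M_{\ba}),A_i\rangle=s+1-\sigma_1(\ba)+(r+1)a_i$; since $\ell$ and any one $A_i$ span $H_2(M_{\ba};\bR)$, the classes agree exactly when $\kappa=1$. I would either carry out this evaluation explicitly or at least check it on the Hirzebruch case $r=s=1$, where $\omega$ assigns the two sections areas $\kappa+1\mp a$ and $c_1$ assigns them $2\mp a$, before relying on the coefficient comparison.
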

In question $1.11$ of \cite{D}, McDuff conjectures that every monotone symplectic toric manifold has a unique toric structure.  An obvious corollary of the above gives some support for this conjecture.
\begin{corollary}If $(M_{\ba}^{2n},\omega_{\ba}^\kappa)$ is monotone, then $N_n(\ba;\kappa)=1$.
\end{corollary}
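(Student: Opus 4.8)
The plan is to read this off directly from the preceding Lemma together with Theorem~\ref{monotoneuniquenesstheorem}. First I would invoke the Lemma, which identifies monotonicity of $(M_{\ba},\omega_{\ba}^\kappa)$ with the single condition $\kappa = 1$; thus the hypothesis that $(M_{\ba}^{2n},\omega_{\ba}^\kappa)$ is monotone forces $\kappa = 1$. Next I would note that for this monotone toric manifold to exist at all, the parameter value $\kappa = 1$ must lie in the admissible range $\kappa > K_{\ba}(s)$ identified before Theorem~\ref{toricclassifyingtheorem} (equivalently $\sigma_1(\ba) < s+1$, i.e. $\sigma_1(\ba)\leq s$). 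Hence $K_{\ba}(s) < \kappa = 1 \leq 1$, which is exactly the hypothesis of Theorem~\ref{monotoneuniquenesstheorem}; applying that theorem immediately gives $N_n(\ba;\kappa) = 1$.

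Since the argument is a straightforward specialization, I do not expect any real obstacle: the only point needing a word of justification is that the monotone value $\kappa = 1$ genuinely falls in the range where $M_{\ba}$ carries the stated toric structure, and this is automatic from the assumed existence of $(M_{\ba},\omega_{\ba}^1,T_{\ba})$ (equivalently from $K_{\ba}(s) = \sigma_1(\ba) - s < 1$). All the substantive work—ruling out competing tuples $\bb$ when $\kappa\leq 1$—is already carried out in the proof of Theorem~\ref{monotoneuniquenesstheorem} in section~4, so here it suffices to chain the two cited results.
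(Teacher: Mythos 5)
Your argument is correct and is exactly the route the paper intends: the preceding lemma reduces monotonicity to $\kappa=1$, and since the existence of the toric structure forces $K_{\ba}(s)<\kappa=1\leq 1$, Theorem~\ref{monotoneuniquenesstheorem} applies directly. The paper treats this as an immediate consequence and gives no separate proof, so your one extra observation (that $\kappa=1$ lies in the admissible range) is the only detail worth recording.
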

\begin{remark}\label{Fanodefinition}Recall that a toric symplectic manifold $(M,\omega,T)$ is called Fano if there is a smooth family of $T$-invariant forms with $\omega_0=\omega$ and $\omega_1$ monotone.  In our case, we have that $(M^{2n}_{\ba},\omega_{\ba}^\kappa,T_{\ba})$ is Fano if and only if $K_{\ba}(s)<1$.  We will call such vectors $\ba$ Fano vectors.  In examples \ref{fanoexample1}, \ref{fanoexample2}, and \ref{fanoexample3} below, we will consider the Fano case in some specific examples.
\end{remark}

We cannot yet compute $N(\ba;\kappa)$ for all $\kappa$.  However, we can say that, as a function of $\kappa$, $N(\ba;\kappa)$ is monotonic and locally constant, with the only jumps possible being at certain integer values of $\kappa$. Furthermore, if $r=s$, these jumps are of size at most $1$.  More specifically, we have the following theorem.
\begin{theorem}\label{stepfunctiontheorem}Let $\ba=(a_1,\ldots,a_r)$ and let $K_M$ be the minimum of the set of $\kappa$ with $N_n(\ba;\kappa)\neq 0$.  Then we have the following
\begin{enumerate}
\item Let $l\geq 0$ an integer, and $\kappa_1,\kappa_2$ real numbers.  Then we have
 \[
 K_M+l(r+1)<\kappa_1,\kappa_2\leq K_M+(l+1)(r+1)\Longrightarrow N_n(\ba;\kappa_1)=N_n(\ba;\kappa_2).
 \]
\item If also $r=s$ and $\kappa=K_M+l(r+1)$ and $0<\epsilon\leq r+1$,
 \[
 N_n(\ba;\kappa)\leq N_n(\ba;\kappa+\epsilon)\leq N(\ba;\kappa)+1.
 \]
\end{enumerate}
\end{theorem}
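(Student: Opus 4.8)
The plan is to translate the counting of toric structures into a combinatorial problem about the tuples $\bb$ that are ``bundle-equivalent'' to $\ba$, using Proposition \ref{mainlemma} together with Lemmas \ref{toricstructurelemma} and \ref{deformationequivalentlemma}. By Lemma \ref{toricstructurelemma} the inequivalent toric structures on the deformation class of $(M_\ba,\omega_\ba^\kappa)$ are in bijection with the normalized tuples $\bb=(b_1,\dots,b_r)$, $0\le b_1\le\dots\le b_r$, such that (i) $M_\bb$ is bundle-isomorphic to $M_\ba$ — equivalently, by Proposition \ref{mainlemma}(4), there is $C\in\bZ$ with $\sigma_i(C,C+\ba)=\sigma_i(0,\bb)$ for $1\le i\le\min\{r+1,s\}$ — and (ii) the toric structure $(M_\bb,\omega_\bb^{\kappa})$ actually exists and lies in the given class, which by the discussion preceding Theorem \ref{toricclassifyingtheorem} happens precisely when $\kappa>K_\bb(s)=\sigma_1(\bb)-s$. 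So $N_n(\ba;\kappa)=\#\{\bb : \bb\sim\ba \text{ as bundles},\ \kappa>\sigma_1(\bb)-s\}$, and $K_M=\min_{\bb\sim\ba}(\sigma_1(\bb)-s)$. (Monotonicity in $\kappa$ is then immediate: enlarging $\kappa$ only enlarges the admissible set of $\bb$.)

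For part (1), I would show that the relevant quantity $\sigma_1(\bb)-s$ takes values only in the set $K_M+(r+1)\bZ_{\ge 0}$. From condition (i), summing appropriately, one gets a relation of the shape $\sigma_1(\bb)=\sigma_1(\ba)+(r+1)C$ for the integer $C$ witnessing the bundle isomorphism — indeed $\sigma_1(C,C+\ba)=(r+1)C+\sigma_1(\ba)$, and matching this with $\sigma_1(0,\bb)=\sigma_1(\bb)$ gives exactly that. Hence any two bundle-equivalent tuples have $\sigma_1$ differing by a multiple of $r+1$, so $\sigma_1(\bb)-s\in K_M+(r+1)\bZ_{\ge0}$ for every $\bb\sim\ba$. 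Consequently the step function $\kappa\mapsto N_n(\ba;\kappa)$ can only jump as $\kappa$ crosses a value of the form $K_M+l(r+1)$, $l\ge 0$; on each half-open interval $(K_M+l(r+1),\,K_M+(l+1)(r+1)]$ the admissible set of $\bb$ is constant, which is the claimed implication.

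For part (2), with $r=s$ the matching condition in Proposition \ref{mainlemma}(4) becomes $\sigma_i(C,C+\ba)=\sigma_i(0,\bb)$ for all $1\le i\le r$, i.e. the two degree-$(r+1)$ polynomials $\prod_{j}(t+C+a_j')$ and $t\prod_j(t+b_j)$ (with $a_0'=0$) agree in all coefficients except possibly the constant term; equivalently $\bb$ is determined by its elementary symmetric data, and the set of valid $C$'s is constrained by the requirement that the resulting roots $-b_j$ be nonpositive integers with $0=b_0\le b_1\le\dots$. The key point is that, for $r=s$, fixing $C$ determines the multiset $\{0,b_1,\dots,b_r\}$ uniquely as the root multiset of $\prod_j(t+C+a_j') - C\prod_j(a_j')\cdot(\text{correction})$ — more precisely, once $\sigma_1,\dots,\sigma_r$ of $(0,\bb)$ are pinned down there is at most one normalized $\bb$ realizing them — so distinct admissible $\bb$ in the class correspond to distinct admissible values of $C$, and each increment of $C$ by $1$ changes $\sigma_1(\bb)$ by exactly $r+1$. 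Therefore as $\kappa$ increases past a single threshold $K_M+l(r+1)$, at most one new value of $C$ (hence at most one new $\bb$) becomes admissible, giving $N_n(\ba;\kappa)\le N_n(\ba;\kappa+\epsilon)\le N_n(\ba;\kappa)+1$ for $0<\epsilon\le r+1$.

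The main obstacle I anticipate is part (2): I must rule out the possibility that two different integers $C$ produce the \emph{same} normalized tuple $\bb$ with the same $\sigma_1(\bb)$ but that \emph{both} thresholds coincide and contribute — more seriously, I must show that at a single threshold value $K_M+l(r+1)$ at most one fresh tuple $\bb$ appears, which requires that the map $C\mapsto\bb$ (on the set of $C$ for which a valid normalized $\bb$ exists) be injective when $r=s$. This injectivity is exactly where the hypothesis $r=s$ is used — for $r>s$ fewer symmetric functions are constrained, so several $\bb$ can share the same $\sigma_1$, which is why the jumps can then exceed $1$ (cf. the example with $\ba=(1,4,4)$). Establishing this injectivity cleanly, presumably by a direct analysis of when $\prod_{j=0}^r(t+C+a_j')$ can be written as $t\cdot\prod_{j=1}^r(t+b_j)$ plus a constant with the $b_j$ a nonnegative nondecreasing integer vector, is the one genuinely delicate step; everything else is bookkeeping with elementary symmetric functions and the dictionary already set up in section 2.
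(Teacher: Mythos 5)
Your proof is correct and follows essentially the same route as the paper: part (1) locates the jumps via $\sigma_1(\bb)=\sigma_1(\ba)+(r+1)C$, and for $r=s$ the size-one bound comes from the fact that $\sigma_1,\dots,\sigma_r$ of $(0,\bb)$ together with the automatic vanishing of $\sigma_{r+1}(0,\bb)$ determine the multiset $\{0,b_1,\dots,b_r\}$. The injectivity step you flag as delicate is in fact immediate for exactly this reason, and it is how the paper argues it (there phrased as: two tuples appearing at the same threshold have equal $\sigma_1$, forcing the relative constant $C$ to vanish and hence all their $\sigma_i$ to agree).
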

Notice that the number $K_M$ above need not equal the number $K_{\ba}(s)$ from before because there could be a different vector $\bb$ so that $(M_{\ba},\omega^\kappa_{\ba})\sim(M_{\bb},\omega^\kappa_{\bb})$ with $K_{\bb}(s)<K_{\ba}(s)$.  Then, we would have $N_n(\ba;\kappa)=N_n(\bb;\kappa)$ for all $\kappa$, while $K_{\bb}(s)<K_{\ba}(s)$, which would obviously imply that $K_M\leq K_{\bb}(s)<K_{\ba}(s)$.

The above theorem then says that if $K_M$ denotes the position of the first jump of $N_n(\ba;\kappa)$, then all subsequent jumps can only occur at the integers $K_M+l(r+1)$, and when $r=s$, these jumps are of size $0$ or $1$.
An obvious corollary of the above two theorems is another uniqueness result.
\begin{corollary}Let $K_M$ be as before, and assume that $r=s$.  Further assume we have an $\ba=(a_1,\ldots,a_r)$ so that $K_{\ba}=K_M$.  Then we have
\[
N_n(\ba;\kappa)=1,\quad\forall~K_M<\kappa\leq K_M+r+1.
\]
\end{corollary}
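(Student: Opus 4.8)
The plan is to combine the two preceding theorems. Since $r=s$, Theorem~\ref{stepfunctiontheorem} applies, so the only possible jump of $N_n(\ba;\kappa)$ in the half-open interval $(K_M,K_M+r+1]$ occurs at the left endpoint $K_M$, and the function is constant on that interval. Thus it suffices to evaluate $N_n(\ba;\kappa)$ at a single $\kappa$ just above $K_M$, and the corollary will follow once we show this value is $1$.

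The key observation is the hypothesis $K_{\ba}=K_M$. First I would note that $K_M$ is, by definition, the smallest $\kappa$ for which $N_n(\ba;\kappa)\neq 0$; but $N_n(\ba;\kappa)\geq 1$ whenever $\kappa>K_{\ba}(s)$, since $(M_{\ba},\omega_{\ba}^\kappa,T_{\ba})$ itself is a valid toric structure in that range (by Definition~\ref{bundledefinition} and Lemma~\ref{bundlecorrespondencetheorem}), and $N_n(\ba;\kappa)=0$ for $\kappa\leq K_{\ba}(s)$ when no other vector $\bb$ with smaller threshold is deformation equivalent. The hypothesis $K_{\ba}=K_M$ precisely says that no such competing $\bb$ exists with $K_{\bb}(s)<K_{\ba}(s)$, so in fact $K_M=K_{\ba}(s)$ and $N_n(\ba;\kappa)\neq 0 \iff \kappa>K_{\ba}(s)$.

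Now I would invoke Theorem~\ref{monotoneuniquenesstheorem} to handle the case $\kappa\leq 1$: there we already know $N_n(\ba;\kappa)=1$ for $K_{\ba}(s)<\kappa\leq 1$, so if $K_M+r+1\leq 1$ we are done immediately. In general, pick any $\kappa_0$ with $K_M<\kappa_0\leq\min(1,K_M+r+1)$ — such a $\kappa_0$ exists since $r+1>0$, and we may shrink further so that $\kappa_0\leq 1$, recalling that $K_M=K_{\ba}(s)$ could a priori be large, but if $K_M\geq 1$ then instead we argue directly. The honest structure is: either $K_M<1$, in which case Theorem~\ref{monotoneuniquenesstheorem} gives $N_n(\ba;\kappa_0)=1$ for $\kappa_0$ slightly above $K_M$, and then part~(1) of Theorem~\ref{stepfunctiontheorem} (with $l=0$) propagates this to all of $(K_M,K_M+r+1]$; or $K_M\geq 1$, which cannot happen because $K_{\ba}(s)=\sigma_1(\ba)-s\leq 1$ would then fail, contradicting... — actually this last point needs care, so the clean route is simply: for $\kappa_0\in(K_M,K_M+r+1]$ with also $\kappa_0\leq 1$ we use Theorem~\ref{monotoneuniquenesstheorem}; if the interval $(K_M,K_M+r+1]$ contains no point $\leq 1$, i.e. $K_M\geq 1$, then $(M_{\ba},\omega_{\ba}^{\kappa_0})$ is not Fano and we must instead use part~(2) of Theorem~\ref{stepfunctiontheorem} together with the base case $N_n(\ba;K_M+\epsilon)\geq 1$.

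The main obstacle I anticipate is pinning down that the value is exactly $1$ rather than merely $\geq 1$ in the non-Fano range $K_M>1$: part~(2) of Theorem~\ref{stepfunctiontheorem} only bounds the \emph{jump} at $K_M$ by $1$, and one needs the value just below $K_M$ to be $0$ — which is exactly what the identification $K_M=K_{\ba}(s)$ (forced by the hypothesis $K_{\ba}=K_M$) delivers, since then $N_n(\ba;\kappa)=0$ for $\kappa\leq K_M$. So the chain is: $N_n(\ba;K_M)=0$; the jump at $K_M$ has size at most $1$ by Theorem~\ref{stepfunctiontheorem}(2); the jump has size at least $1$ because $(M_{\ba},\omega_{\ba}^{\kappa},T_{\ba})$ is itself a toric structure for $\kappa>K_{\ba}(s)=K_M$; hence $N_n(\ba;\kappa)=1$ just above $K_M$; and Theorem~\ref{stepfunctiontheorem}(1) spreads this value across $(K_M,K_M+r+1]$. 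This avoids needing the Fano hypothesis at all, and I would write the proof in that order.
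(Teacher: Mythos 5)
Your final chain of reasoning --- $N_n(\ba;K_M)=0$ because $K_{\ba}=K_M$, the jump at $K_M$ is at most $1$ by Theorem~\ref{stepfunctiontheorem}(2), at least $1$ because $(M_{\ba},\omega_{\ba}^{\kappa},T_{\ba})$ itself appears for $\kappa>K_{\ba}$, and Theorem~\ref{stepfunctiontheorem}(1) propagates the value $1$ across $(K_M,K_M+r+1]$ --- is exactly the argument the paper intends when it calls this an ``obvious corollary of the above two theorems,'' and it is correct. The detour through Theorem~\ref{monotoneuniquenesstheorem} in the middle of your write-up is unnecessary (as you yourself conclude), so simply present the final version.
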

The following result describes the behavior of $N_n(a;\kappa)$ for large $\kappa$.
\begin{theorem}\label{finitenesstheorem}Let $\ba=(a_1,\ldots,a_r)$ and $\bb=(b_1,\ldots,b_r)$ be as before and let $C$ be an integer, as in Proposition \ref{mainlemma}.  Furthermore, assume that
\[
\sigma_i(0,\bb)=\sigma_i(C,C+\ba)~i=1,\ldots,s
\]
with $s\geq 2$.  Then we have
\[
-\tfrac{1}{r+1}\sigma_1(\ba)\leq C\leq \tfrac{r-1}{r}\sigma_1(\ba)
\]
Moreover, this implies
\[
\kappa_1,\kappa_2\geq (r+1-\tfrac{1}{r})\sigma_1(\ba)-s \Longrightarrow N_n(\ba;\kappa_1)=N_n(\ba;\kappa_2)
\]
In particular, we have
\[
N_n(\ba;\infty):=\lim_{\kappa\rightarrow\infty}N_n(\ba;\kappa)<\infty
\]
\end{theorem}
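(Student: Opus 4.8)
The plan is to extract from condition (4) of Proposition \ref{mainlemma} a two-sided bound on the shift parameter $C$, and then translate that bound on $C$ into a bound on how far apart the vectors $\bb$ that can occur (for a fixed $\ba$) must lie, which in turn bounds the range of $\kappa$ over which new toric structures can appear. First I would write out the two lowest-order symmetric function identities explicitly. The identity $\sigma_1(C,C+\ba)=\sigma_1(0,\bb)$ reads $(r+1)C+\sigma_1(\ba)=\sigma_1(\bb)$, so $\sigma_1(\bb)\geq 0$ forces $C\geq -\tfrac{1}{r+1}\sigma_1(\ba)$, which is the left-hand inequality. For the right-hand inequality I would use the $\sigma_2$ identity together with the fact that all $b_i\geq 0$: expanding $\sigma_2(0,\bb)=\sigma_2(\bb)\geq 0$ and $\sigma_2(C,C+\ba)=\binom{r+1}{2}C^2+rC\,\sigma_1(\ba)+\sigma_2(\ba)$ gives a quadratic inequality in $C$; combined with the $\sigma_1$ relation and the nonnegativity/ordering of the $a_i$, this should pin $C$ below $\tfrac{r-1}{r}\sigma_1(\ba)$. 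This is where I expect the one genuinely computational step: getting the constant exactly $\tfrac{r-1}{r}$ rather than something weaker will likely require using $\sigma_2(\bb)\le\binom{r}{2}\bigl(\tfrac{\sigma_1(\bb)}{r}\bigr)^2$ (power-mean / Maclaurin) or a similar sharp inequality on the $\bb$ side, and this uses $s\geq 2$ essentially since we need the $\sigma_2$ equation to hold.

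Next I would convert the bound on $C$ into a bound on $\sigma_1(\bb)$. From $(r+1)C+\sigma_1(\ba)=\sigma_1(\bb)$ and $C\leq \tfrac{r-1}{r}\sigma_1(\ba)$ we get $\sigma_1(\bb)\leq (r+1)\tfrac{r-1}{r}\sigma_1(\ba)+\sigma_1(\ba)=\bigl(r+1-\tfrac{1}{r}\bigr)\sigma_1(\ba)$. Therefore $K_{\bb}(s)=\sigma_1(\bb)-s\leq \bigl(r+1-\tfrac{1}{r}\bigr)\sigma_1(\ba)-s$. Now recall (from the discussion around Lemma \ref{toricstructurelemma} and Lemma \ref{deformationequivalentlemma}) that $(M_{\bb},\omega_{\bb}^\kappa)$ contributes a toric structure to the deformation class of $(M_{\ba},\omega_{\ba}^\kappa)$ precisely when $\bb$ satisfies condition (4) for some $C$ \emph{and} $\kappa>K_{\bb}(s)$. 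The key point is that once $\kappa\geq \bigl(r+1-\tfrac{1}{r}\bigr)\sigma_1(\ba)-s$, every $\bb$ satisfying (4) automatically has $\kappa> K_{\bb}(s)$, so the set of admissible $\bb$ no longer changes as $\kappa$ increases past that threshold. Hence $N_n(\ba;\kappa_1)=N_n(\ba;\kappa_2)$ for all $\kappa_1,\kappa_2$ beyond it, giving the stated stabilization.

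Finally, finiteness of $N_n(\ba;\infty)$ follows because the set of $\bb$ with $0\le b_1\le\cdots\le b_r$ and $\sigma_1(\bb)\leq \bigl(r+1-\tfrac{1}{r}\bigr)\sigma_1(\ba)$ is finite (integer points in a bounded simplex), and by Lemma \ref{toricstructurelemma} distinct such $\bb$ give distinct toric structures, so $N_n(\ba;\kappa)$ is bounded above by this finite count for every $\kappa$ and equals it in the limit. I expect the main obstacle to be the sharp $\sigma_2$ estimate in the first paragraph — in particular handling the case analysis when some $a_i$ coincide or vanish, and checking that the extremal configurations of $\bb$ really do realize the claimed endpoint $C=\tfrac{r-1}{r}\sigma_1(\ba)$ so that the constant cannot be improved — whereas the passage from the $C$-bound to the $\kappa$-stabilization and to finiteness is essentially bookkeeping with the definitions already set up in the excerpt.
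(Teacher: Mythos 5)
Your proposal is correct and follows essentially the same route as the paper: the lower bound on $C$ from $\sigma_1(\bb)\geq 0$, the upper bound $C\leq\tfrac{r-1}{r}\sigma_1(\ba)$ from comparing $\sigma_2(C,C+\ba)$ with the Maclaurin-type maximum $\sigma_2(\bb)\le\binom{r}{2}\bigl(\tfrac{\sigma_1(\bb)}{r}\bigr)^2$ over vectors with the prescribed $\sigma_1$, and then the translation of the resulting bound on $\sigma_1(\bb)$ (hence on $K_{\bb}(s)$) into stabilization of $N_n(\ba;\kappa)$ past the threshold $(r+1-\tfrac{1}{r})\sigma_1(\ba)-s$. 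The quadratic comparison you flag as the "genuinely computational step" is exactly the content of the paper's Lemma \ref{finitenesslemma}, carried out by evaluating the difference polynomial and its derivative at $C=\tfrac{r-1}{r}\sigma_1(\ba)$.
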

\begin{remark}This result is surprising at first glance.  The condition $N_n(\ba;\infty)<\infty$ implies that there are at most finitely many toric structures which are compatible with an arbitrary symplectic structure on the given deformation class of $M_a$.  This is stronger than the finiteness result proven by Borisov and McDuff in \cite{D}, which relies on fixing a symplectic structure to get finiteness.

However, if $r=s=1$, this does not happen.  Indeed, in that case, $\ba=a$ and $\bb=b$ are just numbers, and the manifolds $(M_{\ba},\omega_{\ba}^\kappa)$ are the well known Hirzebruch surfaces.  It is known for the Hirzebruch surfaces that if $b-a$ is even, then $(M_{\ba},\omega_{\ba}^\kappa)\sim(M_{\bb},\omega_{\bb}^\kappa)$, which shows that for any a, we have
\[
\lim_{\kappa\rightarrow\infty}N_2(\ba;\kappa)=\infty
\]
\end{remark}

The above theorem says that for any $\ba$, $N_{r+s}(\ba;\infty)$ is finite if $r,s\geq 2$.  A natural question to ask is what happens if instead of allowing only $\kappa$ to vary, we also allow $\ba$ to vary.  Namely, we can consider the quantity
\[
\sup_{\ba}N_{r+s}(\ba;\infty)
\]
for fixed $r,s\geq 2$.  We have the following conjecture.

\begin{conjecture}\label{unboundedconjecture}For any positive integers $r\geq s\geq 2$ we have
\[
\sup_{\ba}N_{r+s}(\ba;\infty)=\infty
\]
where the supremum is over all non-negative integer vectors $\ba$.
\end{conjecture}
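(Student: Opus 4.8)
The plan is to first reduce the conjecture to a combinatorial question about elementary symmetric functions, and then to produce, for every $N$, a vector $\ba$ with at least $N$ "companions''. Combining Proposition~\ref{mainlemma}, Lemma~\ref{deformationequivalentlemma} and Lemma~\ref{toricstructurelemma} with the finiteness in Theorem~\ref{finitenesstheorem}, the quantity $N_{r+s}(\ba;\infty)$ is exactly the number of distinct sorted tuples $\bb=(b_1,\ldots,b_r)$ of non-negative integers for which there is a $C\in\bZ$ with
\[
\sigma_i(C,a_1+C,\ldots,a_r+C)=\sigma_i(0,b_1,\ldots,b_r),\qquad 1\leq i\leq s,
\]
where we use $r\geq s$ so that $\min\{r+1,s\}=s$. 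So it suffices to exhibit, for each $N$, a vector $\ba$ admitting at least $N$ such $\bb$, and I would treat $r>s$ and $r=s$ separately.

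When $r>s$ there is already room at $C=0$: since a zero entry does not change the elementary symmetric functions, the condition there reads $\sigma_i(\ba)=\sigma_i(\bb)$, $1\leq i\leq s$, and one is reduced to finding a multiset of $r$ non-negative integers whose first $s$ elementary symmetric functions are realized by at least $N$ other such multisets. For $s=2$ this is classical: fixing $\sigma_1,\sigma_2$ is the same as fixing $\sum b_i$ and $\sum b_i^2$, and the solution set is the set of lattice points on the intersection of a hyperplane with a sphere in $\bR^r$, which (for $r\geq 3$) can be made arbitrarily large by the arithmetic of the form $x^2+xy+y^2$ once $\sigma_1$ has enough prime factors $\equiv 1\pmod 3$; one then adds a large constant so that all coordinates are non-negative. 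For general $s$ and $r$ sufficiently large I would write $\bb=W\sqcup E$ with $W$ a fixed multiset and use $\sigma_i(W\sqcup E)=\sum_j\sigma_j(W)\sigma_{i-j}(E)$ to reduce to producing many multisets $E$ of a fixed size $d>s$ with fixed first $d-1$ elementary symmetric functions; such families come from $\prod_{e\in E}(x-e)=g(x)^2-m^2=(g(x)-m)(g(x)+m)$ for a fixed monic $g$ (chosen with roots symmetric about a point, so that $g-m$ and $g+m$ split together) and varying $m$, where the requirement that $g(x)\mp m$ split into non-negative integer linear factors reduces, after completing the square, to integers with many representations by a binary quadratic form. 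Iterating this reduces $d$ down to the circle case above, which settles all sufficiently large $r$; the intermediate range of $r$ only slightly larger than $s$ would need an additional case analysis.

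The case $r=s$ is, I expect, the main obstacle. Here $C=0$ forces $\bb=\ba$, since two $(s{+}1)$-element multisets that both contain $0$ and agree in $\sigma_1,\ldots,\sigma_s$ must coincide; so a nontrivial companion must use $C\neq 0$, and one exists for a given shift $C$ precisely when the degree-$s$ polynomial $\bigl(\prod_{i=0}^{s}(x-C-a_i)-\prod_{i=0}^{s}(-C-a_i)\bigr)/x$ factors into non-negative integer linear factors. For $s=2$ this is a Pell-type condition: completing the square in the discriminant, it holds exactly when $u^2+3k^2=\sigma_1(\ba)^2+3(a_2-a_1)^2$ has a solution with $u\equiv\sigma_1(\ba)\pmod 3$ and suitable sign and parity, and $N$ solutions are forced by choosing $\sigma_1(\ba)$ with many prime factors $\equiv 1\pmod 3$. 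For $s\geq 3$ I do not see how to make a degree-$s$ polynomial split completely for arbitrarily many shifts; moreover, merely achieving $N_{2s}(\ba;\infty)\geq 2$ for some $\ba$ is equivalent (after a common translation) to the existence of two distinct $(s{+}1)$-element multisets of integers with equal power sums $p_1,\ldots,p_s$, i.e.\ an ideal Prouhet--Tarry--Escott solution of degree $s$, which is open for large $s$. Thus a complete proof when $r=s$ would in particular settle that problem, and I would expect the two to require a common construction.
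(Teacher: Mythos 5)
The statement you set out to prove is stated in the paper as a conjecture, and the paper does not prove it: the only result established there is Theorem~\ref{unboundedtheorems=2}, the case $s=2$ (for all $r\geq 2$). Your proposal is in the same position, and to your credit you say so explicitly. Your reduction of the $r=s$, $C=0$ case (two $(s{+}1)$-element multisets containing $0$ and sharing $\sigma_1,\dots,\sigma_s$ must coincide) is correct, and your observation that even $N_{2s}(\ba;\infty)\geq 2$ for $r=s$ is equivalent, after a common translation and Newton's identities, to an ideal Prouhet--Tarry--Escott solution of size $s+1$ is correct and is a sharper formulation of the obstruction than the paper's own remark that ``the equations involved are much more complicated.'' So the genuine gap --- the case $r=s\geq 3$, and with it the intermediate range of $r$ slightly above $s$ in your splitting construction --- is a gap the paper shares; there is no paper proof to compare against there.

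For the part that can be proved, $s=2$, your route differs from the paper's in both halves. The paper first reduces $r>s$ to $r=s$ by padding an $r=s$ example and its companions with zeros, so that all companions of the padded vector occur at $C=0$; it then treats $r=s=2$ by writing $\bb=(C+a+x,2C+b-x)$, imposing $C=\lambda x$ with $\lambda=1/n$, and using the Chinese Remainder Theorem on the pairwise coprime moduli $N_i=n_i^2-n_i+1$ to make $k-1$ distinct shifts $C$ work simultaneously. You instead attack $r>2$ directly at $C=0$ (many $r$-element multisets sharing $\sigma_1,\sigma_2$, i.e.\ lattice points on a circle) and $r=s=2$ via representations of $\sigma_1(\ba)^2+3(a_2-a_1)^2$ by $u^2+3k^2$ with $u=3C+\sigma_1(\ba)$; I checked the discriminant computation and it is right. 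The two arguments are cousins --- the paper's moduli $n^2-n+1$ are norms from $\bZ[\zeta_6]$, so both run on the arithmetic of the binary form of discriminant $-3$ --- but yours handles $r>s=2$ without first manufacturing an $r=s$ example. To make your version complete you still owe the routine verifications that enough representations satisfy the parity, sign, and congruence constraints ($u\equiv k\bmod 2$, $u\geq k\geq 0$, $u\equiv\sigma_1(\ba)\bmod 3$) and that the resulting sorted tuples $\bb$ are pairwise distinct; the paper's formulation gets distinctness for free because its companions arise from distinct values of $\sigma_1(\bb)$.
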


Although we have not been able to verify this conjecture in full generality, we do have the following support for our conjecture.

\begin{theorem}\label{unboundedtheorems=2}For any integer $r\geq s=2$ we have
\[
\sup_{\ba}N_{r+2}(\ba;\infty)=\infty
\]
where the supremum is over all non-negative integer vectors $\ba$.
\end{theorem}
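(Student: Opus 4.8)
The plan is to translate the statement into a counting problem about elementary symmetric functions, and then, for each $N$, to exhibit a single vector $\ba$ whose deformation class carries at least $N$ inequivalent toric structures. First I would record the reduction. For $s=2$ only $\sigma_1$ and $\sigma_2$ enter condition (4) of Proposition \ref{mainlemma}, so by Proposition \ref{mainlemma}, Lemma \ref{deformationequivalentlemma}, Lemma \ref{toricstructurelemma}, and the finiteness in Theorem \ref{finitenesstheorem} (which guarantees the limit defining $N_{r+2}(\ba;\infty)$ is a genuine count), $N_{r+2}(\ba;\infty)$ equals the number of normalized vectors $\bb=(b_1,\ldots,b_r)$, $0\le b_1\le\cdots\le b_r$, for which there exists $C\in\bZ$ with
\[
\sigma_1(C,C+\ba)=\sigma_1(0,\bb),\qquad \sigma_2(C,C+\ba)=\sigma_2(0,\bb).
\]
Since the first equation forces $C=(\sigma_1(\bb)-\sigma_1(\ba))/(r+1)$, distinct admissible values of $C$ give distinct $\bb$; hence it suffices to produce, for each $N$, a vector $\ba$ admitting at least $N$ integers $C$ for which a valid $\bb$ exists.

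Next I would specialize: let $\ba$ have exactly two nonzero entries, both equal to a positive integer $a$, so $\ba=(0,\ldots,0,a,a)$. A direct computation gives
\[
\sigma_1(C,C+\ba)=(r+1)C+2a,\qquad \sigma_2(C,C+\ba)=\binom{r+1}{2}C^2+2raC+a^2.
\]
Looking for $\bb$ of the same shape, $\bb=(0,\ldots,0,b_1,b_2)$, the matching conditions become $b_1+b_2=(r+1)C+2a$ and $b_1b_2=\binom{r+1}{2}C^2+2raC+a^2$, and such non-negative integers $b_1\le b_2$ exist exactly when $b_1+b_2\ge 0$, $b_1b_2\ge 0$, and the discriminant
\[
D(C):=(b_1+b_2)^2-4b_1b_2=-(r-1)\,C\big((r+1)C+4a\big)
\]
is a perfect square. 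The three inequalities confine $C$ to an interval $[\,x(r)\,a,\,0\,]$ whose length is a fixed positive multiple of $a$, the binding lower bound coming from $b_1b_2\ge 0$, with $x(r)<0$ depending only on $r$; here one uses $r\ge 2$ (this is where the hypothesis $r\ge s=2$ enters) so that $r^2-1>0$ and the relevant quadratics have the claimed sign pattern.

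Finally I would dispose of the perfect-square condition by a rational sweep. The conic $D(C)=m^2$ carries the rational point $(C,m)=(0,0)$, which corresponds to $\bb=\ba$; intersecting with the pencil of lines $m=tC$ yields the parametrization
\[
C(t)=\frac{-4(r-1)\,a}{t^2+r^2-1},\qquad m=tC(t),
\]
for which $D(C(t))=(tC(t))^2$ is automatically a square. Two things then need checking: that $C(t)$ lands in the valid interval $[x(r)a,0]$, which holds once $t\ge T_0(r)$ for an explicit bound $T_0(r)$ depending only on $r$ (because $|C(t)/a|\to 0$ as $t\to\infty$); and that $C(t)$ is an integer, for which it suffices that $t^2+r^2-1$ divides $4(r-1)a$. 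Given $N$, I would pick distinct integers $t_1<\cdots<t_N$ with $t_1\ge T_0(r)$ and set $a:=\prod_{j=1}^N(t_j^2+r^2-1)$; then each $C_j:=C(t_j)$ is an integer in the valid interval, the associated $b_1^{(j)}\le b_2^{(j)}$ come out as non-negative integers (using $4b_1^{(j)}b_2^{(j)}=D(C_j)\in\bZ$ to verify the parity), and the $C_j$ are pairwise distinct since $t\mapsto C(t)/a$ is strictly increasing for $t>0$. This gives at least $N$ admissible values of $C$, hence $N_{r+2}(\ba;\infty)\ge N$, and letting $N\to\infty$ proves the theorem.

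The main obstacle is precisely the realizability in the third step: one must guarantee that the prescribed pair $(\sigma_1,\sigma_2)$ actually arises from an honest non-negative integer vector, and this is where the perfect-square requirement on $D(C)$ — the number-theoretic heart of the problem — is felt. Parametrizing the conic through its obvious rational point removes it at a stroke, since the square is built into $C(t)$; the only remaining care is to keep the chosen $C_j$ inside the positivity window, which is why the $t_j$ are taken large. Everything else is bookkeeping with Proposition \ref{mainlemma}, Lemma \ref{deformationequivalentlemma} and Lemma \ref{toricstructurelemma}.
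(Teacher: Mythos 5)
Your proof is correct, and at its core it is the same idea as the paper's --- parametrize the discriminant conic through its obvious rational point and then choose $\ba$ so that the swept values of $C$ become integers --- but your execution differs in ways worth noting. The paper first reduces $r>s=2$ to $r=s=2$ by padding with zeros, works with a general two-entry vector $\ba=(a,b)$, substitutes $C=x/n$ into the quadratic matching condition, and is then forced to solve a simultaneous system of congruences $(n_i-1)b-n_ia\equiv 0\ (\mathrm{mod}\ n_i^2-n_i+1)$; this requires the Chinese Remainder Theorem together with an inductive construction (via factorials) of integers $n_i$ making the moduli $n_i^2-n_i+1$ pairwise coprime. By specializing to the diagonal vector $\ba=(0,\ldots,0,a,a)$ you collapse each congruence to a plain divisibility $t_j^2+r^2-1\mid 4(r-1)a$, which you satisfy simply by taking $a$ to be the product of the moduli --- no CRT, no coprimality construction, and no reduction to $r=s$, since your computation of $\sigma_1$, $\sigma_2$ and of $D(C)=-(r-1)C\bigl((r+1)C+4a\bigr)$ is valid for all $r\ge 2$ at once. (Indeed, for $r=2$ and $a=b$ your parameter $t$ is the paper's $2n-1$ and your modulus $t^2+3=4(n^2-n+1)$ recovers the paper's, so the two constructions literally coincide on their overlap.) Two small blemishes, neither fatal: the parenthetical identity $4b_1^{(j)}b_2^{(j)}=D(C_j)$ should read $(b_1^{(j)}+b_2^{(j)})^2-D(C_j)=4b_1^{(j)}b_2^{(j)}$, which is the relation that actually forces $b_1+b_2$ and $\sqrt{D}$ to have the same parity; and the feasible set for $C$ cut out by your three inequalities is in general a union of two intervals rather than a single one, though only the component adjacent to $0$ matters since $C(t)\to 0^-$ as $t\to\infty$.
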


Theorem \ref{finitenesstheorem} above has the following interesting corollary.

\begin{corollary}Let $\ba=(a_1,\ldots,a_r)$ be as before, and let $r,s\geq 2$.  Then there is a constant $K$ so that for all $\kappa\geq K$, the symplectomorphism class of $(M_{\ba},\omega_{\ba}^\kappa)$ has exactly $N_n(\ba;\infty)$ inequivalent toric structures, for all $\kappa>K$.
\end{corollary}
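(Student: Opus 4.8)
The plan is to obtain this as a consequence of Theorem \ref{finitenesstheorem} together with the final (``Moreover'') assertion of Lemma \ref{deformationequivalentlemma}. First I would record the finitely many vectors that are relevant: by Proposition \ref{mainlemma} a normalized vector $\bb$ with $0\leq b_1\leq\ldots\leq b_r$ gives a manifold $M_{\bb}$ isomorphic to $M_{\ba}$ as a symplectic bundle if and only if there is $C\in\bZ$ with $\sigma_i(C,C+\ba)=\sigma_i(0,\bb)$ for $1\leq i\leq\min\{r+1,s\}$, and by the bound $-\tfrac1{r+1}\sigma_1(\ba)\leq C\leq\tfrac{r-1}{r}\sigma_1(\ba)$ of Theorem \ref{finitenesstheorem} there are only finitely many such $\bb$; call this finite set $\mathcal B$, and note it is independent of $\kappa$.

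Next, for $\kappa$ large enough that $(M_{\bb},\omega_{\bb}^\kappa)$ is defined for every $\bb\in\mathcal B$ (that is, $\kappa>K_{\bb}(s)$ for all $\bb\in\mathcal B$), Lemma \ref{deformationequivalentlemma} identifies $\mathcal B$ with the set of normalized vectors $\bb$ for which $(M_{\bb},\omega_{\bb}^\kappa)$ is deformation equivalent to $(M_{\ba},\omega_{\ba}^\kappa)$, while Lemma \ref{toricstructurelemma} shows distinct elements of $\mathcal B$ give inequivalent toric structures; combined with Theorem \ref{toricclassifyingtheorem} and Lemma \ref{volumetheorem} (so that, volume being a symplectomorphism invariant, the free parameter of a competing toric structure must again equal $\kappa$), this recovers $N_n(\ba;\kappa)=|\mathcal B|$, which by Theorem \ref{finitenesstheorem} equals $N_n(\ba;\infty)$ for all $\kappa$ past some threshold $K_0$.

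I would then invoke the ``Moreover'' part of Lemma \ref{deformationequivalentlemma} once for each of the finitely many $\bb\in\mathcal B$: it furnishes a threshold $K_{\bb}$ with the property that $(M_{\bb},\omega_{\bb}^\kappa)$ is genuinely symplectomorphic to $(M_{\ba},\omega_{\ba}^\kappa)$ whenever $\kappa>K_{\bb}$. Set $K:=\max\bigl(K_0,\max_{\bb\in\mathcal B}K_{\bb}\bigr)$ and fix $\kappa>K$. Any toric structure carried by a symplectic manifold symplectomorphic to $(M_{\ba},\omega_{\ba}^\kappa)$ is a fortiori carried by one deformation equivalent to it, hence is equivalent to some $(M_{\bb},\omega_{\bb}^\kappa,T_{\bb})$ with $\bb\in\mathcal B$; conversely, for each $\bb\in\mathcal B$ the inequality $\kappa>K_{\bb}$ makes $(M_{\bb},\omega_{\bb}^\kappa,T_{\bb})$ precisely such a toric structure, and by Lemma \ref{toricstructurelemma} these are pairwise inequivalent. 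Hence the symplectomorphism class of $(M_{\ba},\omega_{\ba}^\kappa)$ carries exactly $|\mathcal B|=N_n(\ba;\infty)$ inequivalent toric structures.

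The only real subtlety is uniformity: one must be able to make the ``$\kappa\gg 0$'' of Lemma \ref{deformationequivalentlemma} hold simultaneously for all competing vectors, and this is exactly what the finiteness of $\mathcal B$ (a consequence of the $C$-bound in Theorem \ref{finitenesstheorem}) provides, by taking a maximum over a finite set. Everything else is bookkeeping with equivalences already established.
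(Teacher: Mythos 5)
Your proposal is correct and follows essentially the same route as the paper: use Theorem \ref{finitenesstheorem} to stabilize $N_n(\ba;\kappa)=N_n(\ba;\infty)$ past an explicit threshold and to cut the competing vectors $\bb$ down to a finite set, then apply the ``Moreover'' clause of Lemma \ref{deformationequivalentlemma} to each such $\bb$ and take the maximum of the finitely many resulting thresholds. Your version merely spells out details the paper leaves implicit (why the set of competitors is finite via the $C$-bound, and why a competing toric structure must carry the same parameter $\kappa$ via the volume).
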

\begin{proof}By Theorem \ref{finitenesstheorem}, we know that $N_n(\ba;\infty)$ is a finite number.  More specifically, for all $\kappa>(r+1-\tfrac{1}{r})\sigma_1(\ba)-s$, $N_n(\ba;\kappa)=N_n(\ba;\infty)$.  Also, as in Lemma \ref{deformationequivalentlemma}, for each vector $\bb$ so that $(M_{\bb},\omega_{\bb}^\kappa)$ is deformation equivalent to $(M_{\ba},\omega_{\ba}^\kappa)$ for some $\kappa$, there is a constant $C_{\bb}$ so that for all $\kappa>C_{\bb}$, $(M_{\bb},\omega_{\bb}^\kappa)$ is actually symplectomorphic to $(M_{\ba},\omega_{\ba}^\kappa)$.  Furthermore, there is a finite number of such constants $C_{\bb}$, and thus we can define the constant $K$ as
\[
K:=\max\{C_{\bb},(r+1-\tfrac{1}{r})\sigma_1(\ba)-s\}
\]
Then as above, the constant $K$ is as desired.  Namely, for all $\kappa>K$, the symplectomorphism class of $(M_{\ba},\omega_{\ba}^\kappa)$ has exactly $N_n(\ba;\infty)$ toric structures.
\end{proof}

\begin{acknowledgements}This paper would not have been possible without the constant support and advice of my advisor, Dusa McDuff, who first suggested the topic and helped revise several versions of this paper.
\end{acknowledgements}

\section{Definitions and Basic Results}
This section discusses some basic background results that are important for understanding and proving our results above.  It is divided into an introduction to the geometric ideas and the homological ideas we will need.  We now introduce the geometric ideas.

First, we will discuss toric manifolds and polytopes.  We will focus mostly on the case where $M$ is a symplectic toric $\bC P^r$ bundle over $\bC P^s$.  Recall that we say $M$ is a symplectic bundle if $M$ is an $\widetilde{M}$ bundle over $\widehat{M}$ so that $\widetilde{M}$ has a symplectic structure $\omega_0$ and the structure group of the bundle is $\Symp(\widetilde{M})$.  In particular, this implies that each fiber $F_x$ over a point $x\in\widehat{M}$ has a symplectic structure $\omega_x$ so that $i^*(\omega_x)=\omega_0$ where $i$ is the inclusion of the standard fiber.

If $H^1(\widehat{M})=0$, as it is in our case where $\widehat{M}=\bC P^s$, we can piece the forms $\omega_x$ together into a closed form $\tau$ on $M$ so that $\tau$ is non-degenerate on the fibers of $M$.  If also $(\widehat{M},\widehat{\omega})$ is symplectic, then there is a symplectic form $\omega$ on $M$, defined by
\[
\omega=\tau+K\pi^*(\widehat{\omega})
\]
where $\pi:M\rightarrow\widehat{M}$ is the projection and $K\in\bR$.  It is well known that $\omega$ is symplectic for sufficiently large $K$.

Now further assume that that we have Hamiltonian torus actions $\widetilde{T}$, $T$, and $\widehat{T}$ on $\widetilde{M}$, $M$, and $\widehat{M}$ respectively, making them each symplectic toric manifolds.  Then we say that $M$ is a symplectic toric bundle if there is a short exact sequence
\[
\widetilde{T}\rightarrow T\rightarrow\widehat{T}
\]
such that $i:(\widetilde{M},\widetilde{T})\rightarrow (M,T)$ and $\pi:(M,T)\rightarrow(\widehat{M},\widehat{T})$ are equivariant.

Now, let $\Delta$ be the moment polytope of a toric structure on some symplectic toric manifold $(M,\omega,T)$.  We can describe $\Delta$ as
\[
\{x\in t^*|\langle x,\eta_i\rangle\leq\kappa_i\text{ for all $i$}\}
\]
where the $\eta_i$ are the outward primitive integer conormals to the facets of $\Delta$ and the $\kappa_i$ are support constants.

\begin{example}\label{projectiveexample}The moment polytope of $\bC P^n$ will be denoted $\Delta_n$, and is a copy of the standard $n$-simplex when we choose $\eta_i=-e_i$ for $1\leq i\leq n$, $\eta_{n+1}=(1,\ldots,1)$ and all $\kappa_i=1$.  Notice that $\Delta_n$ has edges of length $n+1$ and has volume equal to
\[
\Vol(\Delta_n)=\tfrac{1}{n!}(n+1)^n
\]
\end{example}

We recall that any moment polytope is a simple, smooth, rational polytope.  If $\dim(\Delta)=n$, \textbf{simple} means at each vertex exactly $n$ facets meet, \textbf{rational} means that the conormal vectors to these facets are primitive integral vectors, and \textbf{smooth} means that these vectors form an integer basis of $\bZ^n$. We call such polytopes Delzant polytopes.  The well known Delzant theorem from \cite{Del} says

\begin{theorem}(Delzant)For each Delzant polytope $\Delta$, there is a symplectic toric manifold $M_\Delta$ with moment polytope $\Delta$.  Moreover, $(M,\omega,T)$ is equivariantly symplectomorphic to $(M',\omega',T')$ if and only if $\Delta_M$ and $\Delta_{M'}$ are equivalent under the affine group generated by translations and the action of $GL(n;\bZ)$
\end{theorem}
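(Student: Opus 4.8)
The plan is to prove the existence statement by Marsden--Weinstein symplectic reduction of $\bC^d$, and the uniqueness statement by patching together local equivariant normal forms over a fixed identification of the dense open orbit. Throughout, the key hypothesis that does the work is smoothness (the Delzant condition): at every vertex the primitive conormals of the incident facets form a $\bZ$-basis.

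\emph{Existence.} Write the Delzant polytope as
\[
\Delta = \{x\in t^* : \langle x,\eta_i\rangle \le \kappa_i,\ 1\le i\le d\},
\]
with $\eta_i$ primitive outward conormals. Define $\pi:\bR^d\to\bR^n$ by $e_i\mapsto\eta_i$. Simplicity makes $\pi$ surjective, and smoothness forces $\pi(\bZ^d)=\bZ^n$, so $\pi$ induces a surjection of tori $T^d\to T^n$ whose kernel $N$ is a subtorus of dimension $d-n$ with Lie algebra $\mathfrak n=\ker\pi$. Equip $\bC^d$ with the standard $T^d$-action and its moment map $z\mapsto -\tfrac12\sum_i|z_i|^2\,e_i^*+\kappa$, restrict to $N$ to get $\mu_N:\bC^d\to\mathfrak n^*$, and set $M_\Delta:=\mu_N^{-1}(0)/N$. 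One then checks, using the Delzant condition stratum by stratum, that $0$ is a regular value of $\mu_N$, that $N$ acts freely on $\mu_N^{-1}(0)$, that $M_\Delta$ is therefore a compact symplectic $2n$-manifold, that the residual effective Hamiltonian $T^n=T^d/N$-action is toric, and that its moment image is exactly $\Delta$. I expect the last identification --- tracking which $T^d$-orbits survive in the reduction over each face of $\Delta$ --- to be the bookkeeping heart of this half; it is routine but is where all the hypotheses get used. Applying $g\in GL(n;\bZ)$ or a translation to $\Delta$ only reparametrizes $T^n$ or shifts $\mu$ by a constant, so $M_{g\cdot\Delta}$ is equivariantly symplectomorphic to $M_\Delta$.

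\emph{Uniqueness: the open orbit.} Given $(M,\omega,T,\mu)$ and $(M',\omega',T',\mu')$ with moment polytopes related by $g\in\mathrm{AGL}(n;\bZ)$, first compose with $g$ and the corresponding automorphism of the identification $T'\cong T^n$ to reduce to the case $\mu(M)=\mu'(M')=\Delta$. Over the interior $\mathring\Delta$, the set $\mu^{-1}(\mathring\Delta)$ is a free $T^n$-space, and since $\mathring\Delta$ is contractible the bundle is trivial; action--angle (toric Arnol'd--Liouville) coordinates identify it, canonically and independently of $M$, with $\mathring\Delta\times T^n$ carrying the standard form. Thus the dense open pieces of $M$ and $M'$ come with a canonical equivariant symplectomorphism $\Phi_0$.

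\emph{Uniqueness: extending across the boundary, and the converse.} The genuinely delicate step is to extend $\Phi_0$ over all of $M$. For this I would invoke the equivariant symplectic slice (local normal form) theorem: near an orbit mapping into the relative interior of a codimension-$k$ face $F$, $M$ is equivariantly symplectomorphic to a standard model $\bC^k\times T^*T^{n-k}$ determined solely by the conormals of the facets containing $F$, hence solely by $\Delta$. So $M$ and $M'$ have literally identical local models over a neighborhood of each face, and on overlaps with the interior they differ from $\Phi_0$ only by an equivariant automorphism that is the identity on the interior part; a partition-of-unity Moser argument (equivalently: the sheaf of such gluing automorphisms over the contractible base $\Delta$ has no obstruction to a global section) patches the local models and $\Phi_0$ into a global equivariant symplectomorphism $M\to M'$. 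Conversely, an equivariant symplectomorphism $\Psi:(M,\omega,T)\to(M',\omega',T')$ intertwines the actions through an automorphism of $T^n$, i.e. an element of $GL(n;\bZ)$, and then $\mu$ and $\mu'\circ\Psi$ are two moment maps for the same action, hence differ by a constant in $t^*$; so the polytopes are $\mathrm{AGL}(n;\bZ)$-equivalent. The main obstacle is the boundary-extension argument: the local normal forms are cheap, but assembling them compatibly with the already-fixed interior identification $\Phi_0$ requires care with the gluing cocycle; everything else is routine once the slice theorem is in hand.
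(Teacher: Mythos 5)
The paper does not actually prove this statement: it is quoted verbatim as Delzant's theorem and imported from \cite{Del}, so there is no internal proof to compare against. Your proposal is, in outline, exactly the standard argument of Delzant's original paper: existence by symplectic reduction of $\bC^d$ by the subtorus $N=\ker(T^d\to T^n)$ determined by the conormals, with the smoothness condition guaranteeing both that $\pi(\bZ^d)=\bZ^n$ and that $N$ acts freely on $\mu_N^{-1}(0)$; uniqueness by local normal forms over the faces of $\Delta$ plus a sheaf-cohomological patching argument over the contractible base. The outline is sound and the genuinely hard points are correctly identified rather than hidden. Two remarks. First, your claim that action--angle coordinates identify $\mu^{-1}(\mathring\Delta)$ with $\mathring\Delta\times T^n$ ``canonically and independently of $M$'' is an overstatement: the identification depends on a choice of Lagrangian section of the torus fibration, and this non-canonicity is precisely why the gluing cocycle in the last step is nontrivial. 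Since you in any case resolve the patching by observing that the sheaf of equivariant symplectomorphisms commuting with the moment map has vanishing $H^1$ over the convex (hence contractible) polytope --- which is Delzant's actual mechanism --- this is a presentational slip rather than a gap, but the word ``canonical'' should go. Second, the existence half as written is a construction plus a list of claims to be ``checked stratum by stratum''; for a complete proof one must actually carry out the verification that the reduced space has moment image exactly $\Delta$ and that the residual action is effective, which you correctly flag as the bookkeeping core. As a blind reconstruction of a theorem the paper merely cites, this is the right proof and the right level of awareness of where the work lies.
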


We are most interested in the case where the manifold $M$ is a symplectic toric $\widetilde{M}$ bundle over $\widehat{M}$.  To study this, we will discuss the notion of a bundle of polytopes.

The general definition of a polytope $\Delta$ being a $\widetilde{\Delta}$ bundle over $\widehat{\Delta}$ given as $3.10$ of \cite{DT} is more complicated than we will need, so we instead summarize some key points.  In particular, we only need the notion of a $\Delta_r$ bundle over $\Delta_s$.

The basic idea is to develop a notion of bundles so that by the Delzant theorem above, a manifold $(M,\omega,T)$ is a symplectic toric $(\bC P^r,\omega_r,T_r)$ bundle over $(\bC P^s,\omega_s,T_s)$ if and only if $\Delta$ is a $\Delta_r$ bundle over $\Delta_s$.  At this point, we recall that $\Delta\subset \mathfrak{t}^*$, where $\mathfrak{t}$ is the Lie algebra of $T$, and similarly for $\Delta_r$ and $\Delta_s$.  Since the moment polytopes are then naturally subsets of the dual spaces to the Lie algebras of the torus actions, we should expect a $\Delta_r$ bundle over $\Delta_s$ to naturally be fibered by $\Delta_s$ over $\Delta_r$, instead of the other way around.  This motivates the following definition.

\begin{definition}\label{bundledefinition}We say that a polytope $\Delta$ is a $\Delta_r$ bundle over $\Delta_s$ if, for some choice of $(\ba;\kappa)$ where $\ba=(a_1,\ldots, a_r)$ and $\kappa\in\bR$ with $\kappa>K_{\ba}:=\sigma_1(\ba)-s$, $\Delta$ is affine equivalent to the polytope $\Delta_{\ba}^\kappa$, which is defined by setting
\begin{equation*}
\eta_i = \left\{
\begin{array}{ll}
-e_i & \text{if } 1\leq i\leq r\\
(1,\ldots,1,0,\ldots,0) & \text{if } i=r+1\\
-e_{i-1} & \text{if } r+2\leq i\leq s+1\\
(-a_1,\ldots,-a_r,1,\ldots,1) & \text{if } i=r+s+2
\end{array} \right.
\end{equation*}
with $\kappa_i=1$ for $1\leq i\leq r+s+1$, and $\kappa_{r+s+2}=\kappa$.
\end{definition}

\begin{remark}\label{bundlerescalingremark}The polytope $\Delta_{\ba}^\kappa$ naturally has the structure of a standard copy of $\Delta_r$ with fibers that are all rescaled copies of $\Delta_s$.  The vector $\ba$ then has a natural interpretation as the slope of the increase of the rescaling as we move in the standard directions in $\Delta_r$, while the number $\kappa$ determines the rescaling over the origin.  We will now take a few moments to show this more explicitly by computation.

We obtain relations on the coordinates $x_i$ of an arbitrary point of $\Delta_{\ba}^\kappa$ by computing $\langle x,\eta_i\rangle$ for each $i$.  We get the inequalities
\begin{equation*}
\begin{array}{ll}
x_i\geq-1,\forall i\\
x_1+\ldots+x_r\leq 1\\
x_{r+1}+\ldots +x_{r+s}\leq \kappa+a_1x_1+\ldots+a_rx_r
\end{array}
\tag{*}\end{equation*}
The first two lines of $(*)$ imply the first $r$ coordinates of $x$, $(x_1,\ldots,x_r)$, form a standard copy of $\Delta_r$, as described in Example \ref{projectiveexample}.  Also, the first and third lines of $(*)$ show that the last $s$ coordinates of $x$, $(x_{r+1},\ldots,x_{r+s})$, form a rescaled copy of $\Delta_s$.  Namely, they form a polytope $\Delta_s^{\kappa,x}$ described as a subset of $\bR^s$ by the conormals
\begin{equation*}
\eta_i=\left\{
\begin{array}{ll}
\eta_i=-e_i,\forall 1\leq i\leq s\\
\eta_{s+1}=(1,\ldots,1)
\end{array}\right.
\end{equation*}
with support constants $\kappa_i=1$ for $1\leq i\leq s$ and $\kappa_{s+1}=\kappa+a_1x_1+\ldots+a_rx_r$.  Thus, $\Delta_s^{\kappa,x}$ is simply a standard simplex with edge length $s+\kappa+a_1x_1+\ldots+a_rx_r$.

Note also that the inequalities $(*)$ justify the restriction that $\kappa>\sigma_1(\ba)-s$.  Indeed, if we assume that $(x_1,\ldots,x_r)=(-1,\ldots,-1)$, then the third inequality of $(*)$ says that
\[
x_{r+1}+\ldots+x_{r+s}\leq\kappa-a_1-\ldots-a_r
\]
But on the other hand, the first line of $(*)$ implies that $x_i\geq -1$, so that
\[
x_{r+1}+\ldots+x_{r+s}\geq s
\]
Thus, to avoid contradiction, we must assume that $\kappa>\sigma_1(\ba)-s$.

Also, note that in our case, we assumed all $a_i\geq 0$, so that in the inequality
\[
x_{r+1}+\ldots+x_{r+s}\leq\kappa+a_1x_1+\ldots+a_rx_r,
\]
the size of the right-hand side increases as the $x_i$ increase.  Thus, the $\Delta_s$ fiber of the point $(-1,\ldots,-1)$ is the smallest fiber.
\end{remark}
We now have the following lemma, which gives the relation between $\bC P^r$ bundles over $\bC P^s$ and $\Delta_r$ bundles over $\Delta_s$ and discusses the effect of increasing $\kappa$ on the symplectic form $\omega_{\ba}^\kappa$

\begin{lemma}\label{bundlecorrespondencetheorem} Let $(M,\omega,T)$ be a symplectic toric manifold with polytope $\Delta$.  Then $(M,\omega,T)$ is a symplectic toric $(\bC P^r,\omega_r,T_r)$ bundle over $(\bC P^s,\omega_s,T_s)$ if and only if $\Delta$ is a $\Delta_r$ bundle over $\Delta_s$ equivalent to $\Delta_{\ba}^\kappa$ for some $(\ba;\kappa)$.  Moreover, for a fixed pair $(\ba;\kappa)$
\[
\omega_{\ba}^{\kappa+K}=\omega_{\ba}^\kappa+\tfrac{K}{s+1}\pi^*(\omega_s).
\]

\end{lemma}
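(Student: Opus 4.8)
The statement splits into two essentially independent parts: the equivalence between symplectic toric $\bC P^r$-bundles over $\bC P^s$ and moment polytopes affine equivalent to some $\Delta_{\ba}^\kappa$, and the displayed formula relating $\omega_{\ba}^{\kappa+K}$ to $\omega_{\ba}^\kappa$. The plan is to obtain the first from Delzant's theorem together with the bundle-of-polytopes formalism of \cite{DT}, and the second from the standard description of the symplectic cohomology class of a toric manifold in terms of the support constants of its moment polytope. I expect the main difficulty to be in the forward direction of the first part, namely extracting the precise normal form $\Delta_{\ba}^\kappa$ (with integral twisting vector and the bound $\kappa>\sigma_1(\ba)-s$) from an abstract symplectic toric bundle; once that is in hand, the converse and the cohomological computation are routine.

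For the equivalence, suppose first $(M,\omega,T)$ is a symplectic toric $(\bC P^r,\omega_r,T_r)$-bundle over $(\bC P^s,\omega_s,T_s)$, so that there is a short exact sequence $\widetilde T\to T\to\widehat T$ making the fibre inclusion and the projection equivariant. By the discussion preceding Definition \ref{bundledefinition} (see \cite[3.10]{DT}), the moment polytope $\Delta\subset\mathfrak t^*$ then fibres, via the projection $\mathfrak t^*\to\widetilde{\mathfrak t}^*$ dual to $\widetilde T\hookrightarrow T$, over the moment polytope of the fibre, with fibres copies — rescaled as one moves in the base — of the moment polytope of the base. Since the fibre is $(\bC P^r,\omega_r)$ with $\omega_r(\ell)=r+1$ and the base is $(\bC P^s,\omega_s)$, the Delzant theorem for projective space identifies these with $\Delta_r$ and with rescaled copies of $\Delta_s$. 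Choosing affine coordinates on $\mathfrak t^*$ adapted to a vertex of $\Delta_r$ and to a splitting of the torus sequence, the outward conormals of $\Delta$ are brought to the normal form of Definition \ref{bundledefinition}; smoothness of $\Delta$ forces the twisting data to be an integral vector, which after permuting the summands and tensoring by a line bundle we normalise so that $0\le a_1\le\ldots\le a_r$, while nonemptiness and smoothness force $\kappa>\sigma_1(\ba)-s$ exactly as in Remark \ref{bundlerescalingremark}. Hence $\Delta$ is affine equivalent to $\Delta_{\ba}^\kappa$. Conversely, if $\Delta=\Delta_{\ba}^\kappa$ then $(x_1,\ldots,x_{r+s})\mapsto(x_1,\ldots,x_r)$ maps $\Delta$ onto $\Delta_r$ with fibres the rescaled simplices $\Delta_s^{\kappa,x}$ of Remark \ref{bundlerescalingremark}; this exhibits $\Delta$ as a $\Delta_r$-bundle over $\Delta_s$, and translating back through \cite{DT} and Delzant shows $M_\Delta$ is a symplectic toric $\bC P^r$-bundle over $\bC P^s$.

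For the formula, write $H^2(M_{\ba};\bR)=\bR f\oplus\bR h$, where $f$ restricts to the hyperplane class on the fibre (so $f\cdot\ell=1$) and $h=\pi^*$ of the positive generator of $H^2(\bC P^s)$; since $\Delta_s$ has edge length $s+1$ (Example \ref{projectiveexample}), $\pi^*[\omega_s]=(s+1)h$. Now use the standard formula $[\omega_\Delta]=\sum_i\kappa_i[D_i]$, with $D_i$ the divisor over the $i$-th facet, together with the linear relations $\sum_i(\eta_i)_k[D_i]=0$ obtained from the $k$-th coordinates of the conormals in Definition \ref{bundledefinition}. For $k=r+j$ with $1\le j\le s$, only $\eta_{r+1+j}$ and $\eta_{r+s+2}$ have nonzero $k$-th coordinate, which gives $[D_{r+1+j}]=[D_{r+s+2}]$; since $D_{r+1+j}$ is the preimage of a coordinate hyperplane of $\bC P^s$ we have $[D_{r+1+j}]=h$, and therefore $[D_{r+s+2}]=h=\tfrac{1}{s+1}\pi^*[\omega_s]$. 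Because $\Delta_{\ba}^{\kappa+K}$ differs from $\Delta_{\ba}^\kappa$ only in that the last support constant $\kappa_{r+s+2}=\kappa$ becomes $\kappa+K$, we conclude
\[
[\omega_{\ba}^{\kappa+K}]=[\omega_{\ba}^\kappa]+K[D_{r+s+2}]=[\omega_{\ba}^\kappa]+\tfrac{K}{s+1}\pi^*[\omega_s].
\]
Finally, by the construction recalled before the lemma each $\omega_{\ba}^\kappa$ has the form $\tau_{\ba}+c(\kappa)\pi^*\omega_s$ for one fixed closed fibrewise-nondegenerate form $\tau_{\ba}$ restricting to $\omega_r$ on the fibre and some real $c(\kappa)$; since $[\pi^*\omega_s]$ is not a multiple of $[\tau_{\ba}]$ (the latter pairs nontrivially with $\ell$, the former does not), the cohomological identity forces $c(\kappa+K)=c(\kappa)+\tfrac{K}{s+1}$, whence $\omega_{\ba}^{\kappa+K}=\tau_{\ba}+\bigl(c(\kappa)+\tfrac{K}{s+1}\bigr)\pi^*\omega_s=\omega_{\ba}^\kappa+\tfrac{K}{s+1}\pi^*\omega_s$, which is again a symplectic bundle form of the same type, as claimed.
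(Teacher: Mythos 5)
Your treatment of the equivalence is essentially the paper's: both of you reduce it to Delzant's theorem together with the bundle-of-polytopes formalism of \cite{DT} (Remark $5.2$ there), and your extra detail about normalizing coordinates at a vertex to reach the conormal data of Definition \ref{bundledefinition} fills in what the paper leaves implicit. For the displayed formula, however, you take a genuinely different route. The paper works with the explicit model $M_{\ba}=\bC P^r\times_{\bC^*}(\bC^{s+1}\setminus\{0\})$ and obtains $\omega_{\ba}^{\kappa+K}-\omega_{\ba}^{\kappa}=\tfrac{K}{s+1}\pi^*(\omega_s)$ by comparing the reductions of $\omega_r\times\lambda(\kappa)\omega_0$ at the two parameter values; you instead compute in cohomology, using $[\omega_\Delta]=\sum_i\kappa_i[D_i]$ and the relations $\sum_i(\eta_i)_k[D_i]=0$ to identify $[D_{r+s+2}]$ with the pullback of the hyperplane class and hence with $\tfrac{1}{s+1}\pi^*[\omega_s]$. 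That computation is correct and arguably more transparent than the paper's ``easy computation,'' and it isolates exactly where the factor $\tfrac{1}{s+1}$ comes from. The one soft spot is your final upgrade from cohomology classes to forms: the assertion that $\omega_{\ba}^{\kappa}=\tau_{\ba}+c(\kappa)\pi^*\omega_s$ with a \emph{single} coupling form $\tau_{\ba}$ independent of $\kappa$ is precisely the structural fact that the paper's reduction model supplies, and it does not follow from the general existence statement $\omega=\tau+K\pi^*(\widehat{\omega})$ recalled before the lemma, which only produces some $\tau$ for each individual form. To close this, either invoke the explicit model (the reduced forms at different levels of the $\bC^*$ moment map differ by a pullback from the base) or note that the Delzant construction over $\Delta_{\ba}^{\kappa}$ and $\Delta_{\ba}^{\kappa+K}$ produces forms agreeing on the fibres, after which your cohomological identity pins down the constant. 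With that one point addressed, your argument is complete and is a legitimate alternative to the paper's.
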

\begin{proof}The first statement is discussed in detail in Remark $5.2$ of \cite{DT}, but is difficult to prove in much generality without the full definition of a bundle of polytopes, which we have omitted.  The idea is to use the full definition of a bundle of polytopes to compute $M$ as a complex manifold.  In fact, in remark $5.2$ of \cite{DT}, it is concluded that if $\ba=(a_1,\ldots,a_r)$,
\[
M_{\ba}=\bC P^r\times_{\bC^*}(\bC^{s+1}\setminus\{0\})
\]
for the following $\bC^*$ actions.  Let $(z_1,\ldots,z_r)$ be coordinates on $\bC P^r$.  Then if $te^{i\theta}$ represents the standard polar form of a number in $\bC^*$, the action on $\bC P^r$ is described by
\[
te^{i\theta}\cdot(z_1\ldots,z_r)=\Bigl((te^{i\theta+a_1})z_1,\ldots,(te^{i\theta+a_r})z_r\Bigr)
\]
On $\bC^{s+1}\setminus\{0\}$, the $\bC^*$ action is described by
\[
te^{i\theta}\cdot(z_1,\ldots,z_{s+1})=\Bigl((te^{i\theta})z_1,\ldots,(te^{i\theta})z_{s+1}\Bigr)
\]
which is the standard $\bC^*$ action.

Furthermore, we have
\[
(M_{\ba},\omega_{\ba}^\kappa)=\Bigl(\bC P^r\times (\bC^{s+1}\setminus\{0\}),\Omega_{\lambda(\kappa)}=\omega_r\times\lambda(\kappa)\omega_0\Bigr)
\]
where $\omega_r$ is the standard form on $\bC P^r$, with the rescaling so that $\omega_r(\ell)=r+1$ with $\ell$ the homology class of a line, $\omega_0$ is the standard form on $\bC^{s+1}$, and $\lambda(\kappa)$ is a rescaling factor determined by $\kappa$.

We seek to compute $\omega_{\ba}^{\kappa+K}$.  As above, $M_{\ba}$ is determined as a complex manifold by the relation
\[
M=\bC P^r\times_{\bC^*}(\bC^{s+1}\setminus\{0\}).
\]
Furthermore, $\omega_{\ba}^{\kappa+K}$ is the reduction of $\Omega_{\lambda(\kappa+K)}$ by the $\bC^*$ action.  Then, an easy computation shows that
\[
\omega_{\ba}^{\kappa+K}-\omega_{\ba}^\kappa=\tfrac{K}{s+1}\omega_s
\]
where $\omega_s$ is the standard form on $\bC P^s$ normalized so that $\omega_s(\ell)=s+1$, as before.  Reordering the terms, we get the desired result.
\end{proof}

We now give a helpful condition for detecting when a polytope $\Delta$ is a $\Delta_r$ bundle over $\Delta_s$.  First, there is the notion of two polytopes being combinatorially equivalent.

\begin{definition}\label{combinatoriallyequivalentdefinition}Two polytopes $\Delta$ and $\Delta'$ are said to be \emph{combinatorially equivalent} if there is a bijection $\phi$ between the facets $F_i$ of $\Delta$ and $F_i'$ of $\Delta'$ with $\phi(F_i)=F_i'$ such that for each index set $I$
\[
\bigcap_{i\in I}F_i\neq\emptyset\Longleftrightarrow\bigcap_{i\in I}F_i\neq\emptyset
\]
\end{definition}

McDuff and Tolman prove the following lemma in \cite{DT}

\begin{lemma}\label{bundlelemma}(\cite{DT} Lemma $4.10$) Let $\Delta$ be a polytope which is smooth and combinatorially equivalent to $\Delta_r\times\Delta_s$.  Then $\Delta$ is a $\Delta_r$ bundle over $\Delta_s$ or a $\Delta_s$ bundle over $\Delta_r$.
\end{lemma}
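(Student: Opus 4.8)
The plan is to work directly with the primitive outward conormals of $\Delta$ together with the combinatorial incidence data, reduce the problem to a normal-form computation, and then dispose of the surviving obstruction by passing to $2$-faces, where it becomes the case $r=s=1$ (Hirzebruch surfaces). Write $n=\dim\Delta=r+s$, so $\Delta$ has $r+s+2$ facets. The combinatorial equivalence with $\Delta_r\times\Delta_s$ splits these facets into a family $A_1,\dots,A_{r+1}$ (corresponding to $(\partial\Delta_r)\times\Delta_s$) and a family $B_1,\dots,B_{s+1}$ (corresponding to $\Delta_r\times(\partial\Delta_s)$), characterized by the property that a collection of facets has nonempty intersection exactly when it omits at least one $A_i$ and at least one $B_j$. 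Consequently the vertices of $\Delta$ are the points $v_{p,q}:=\bigcap_{i\ne p}A_i\cap\bigcap_{j\ne q}B_j$, at each of which exactly $r$ facets of the first family and $s$ of the second meet. Let $\alpha_i$ and $\beta_j$ be the primitive outward conormals of $A_i$ and $B_j$.

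First I would put the conormals in a normal form. At the vertex $v_{r+1,s+1}$ the conormals $\alpha_1,\dots,\alpha_r,\beta_1,\dots,\beta_s$ form a $\bZ$-basis of the lattice because $\Delta$ is smooth; applying the resulting element of $GL(n;\bZ)$ and a translation, I may assume $v_{r+1,s+1}$ is the origin and $\alpha_i=-e_i$ for $1\le i\le r$, $\beta_j=-e_{r+j}$ for $1\le j\le s$, all with support constant $0$. For each $i\le r$, smoothness at the vertex $v_{i,s+1}$ (the origin with $A_i$ replaced by $A_{r+1}$) makes its $n$ conormals a basis, which after clearing columns against the $-e_k$, $k\ne i$, forces the $e_i$-coordinate of $\alpha_{r+1}$ to be $\pm1$; it is $+1$ because the edge of $\Delta$ issuing from the origin in the $+e_i$ direction terminates on $A_{r+1}$. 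Hence $\alpha_{r+1}=(1,\dots,1,-c_1,\dots,-c_s)$ (with $r$ leading $1$'s) for some integers $c_j$, and by the symmetric argument $\beta_{s+1}=(-d_1,\dots,-d_r,1,\dots,1)$ for some integers $d_i$. Finally, smoothness at each ``doubly flipped'' vertex $v_{i,j}$ with $i\le r$, $j\le s$, reduces, after clearing columns, to the requirement that the $2\times2$ determinant $1-c_jd_i$ equal $\pm1$, i.e.\ $c_jd_i\in\{0,2\}$ for all $i,j$.

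The crux is the dichotomy that either $c_j=0$ for all $j$ or $d_i=0$ for all $i$; I would derive it by passing to $2$-faces. Fix $i\le r$ and $j\le s$ and consider the face $\Delta\cap\bigcap_{k\le r,\,k\ne i}A_k\cap\bigcap_{l\le s,\,l\ne j}B_l$: combinatorially it is a quadrilateral (a product of an edge of $\Delta_r$ with an edge of $\Delta_s$), it lies in the coordinate plane spanned by $e_i$ and $e_{r+j}$, it is itself smooth (faces of smooth polytopes are smooth), and in that plane its four conormals are $(-1,0)$, $(1,-c_j)$, $(0,-1)$, $(-d_i,1)$. Now a bounded, nonempty, $2$-dimensional quadrilateral with conormals of exactly this shape must satisfy $c_jd_i=0$: smoothness at its fourth vertex gives $c_jd_i\in\{0,2\}$, while $c_jd_i=2$ is impossible because in each of the four sign cases with $|c_j|\,|d_i|=2$ the demand that the last two facets actually meet in a vertex lying in the polygon collapses it to lower dimension (a short computation of that vertex's coordinates against the sign constraints on the support constants). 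As this holds for all $i,j$, if some $c_{j_0}\ne0$ then $d_i=0$ for every $i$, and symmetrically; this is the dichotomy. If all $c_j=0$ then $\alpha_{r+1}=(1,\dots,1,0,\dots,0)$ and $\beta_{s+1}=(-d_1,\dots,-d_r,1,\dots,1)$, so --- after translating to restore the standard support constants, and up to the harmless rescaling that normalizes the edge length of the base simplex --- $\Delta$ carries precisely the conormal and support data of the polytope $\Delta_{\ba}^\kappa$ of Definition~\ref{bundledefinition} with $\ba=(d_1,\dots,d_r)$, boundedness of $\Delta$ forcing $\kappa>\sigma_1(\ba)-s$ exactly as in Remark~\ref{bundlerescalingremark}; thus $\Delta$ is a $\Delta_r$ bundle over $\Delta_s$. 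If instead all $d_i=0$, interchanging the first $r$ and the last $s$ coordinates exhibits $\Delta$ as a $\Delta_s$ bundle over $\Delta_r$.

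I expect the dichotomy to be the one real obstacle: the pairwise relations $c_jd_i\in\{0,2\}$ coming from smoothness alone do not determine the configuration, and one must feed in the global combinatorial and boundedness information. Organizing this through the square $2$-faces makes the argument uniform and reduces it to the Hirzebruch-surface base case, where it is an explicit finite check; this is essentially Lemma~$4.10$ of \cite{DT}. The remaining points are bookkeeping --- tracking the signs of the $c_j,d_i$ (which need not be non-negative before normalization) and matching the support constants to the normalization of Definition~\ref{bundledefinition} --- and are not deep.
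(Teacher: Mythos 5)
Your proposal is correct in substance, but note that the paper itself gives no proof of this lemma at all --- it is quoted verbatim from \cite{DT} (Lemma 4.10) and used as a black box --- so what you have written is a self-contained reconstruction rather than an alternative to an argument in the text. Your route (normalize the conormals at one vertex, use smoothness at the singly-flipped vertices to force $\alpha_{r+1}=(1,\dots,1,-c_1,\dots,-c_s)$ and $\beta_{s+1}=(-d_1,\dots,-d_r,1,\dots,1)$, get $c_jd_i\in\{0,2\}$ from the doubly-flipped vertices, and kill the value $2$ on square $2$-faces) is essentially the McDuff--Tolman argument specialized from their more general setting of products of several simplices, and every step checks out: in the $2$-face with conormals $(-1,0),(0,-1),(1,-c),(-d,1)$ and support constants $0,0,\mu,\nu$ with $\mu,\nu>0$, the requirement that the vertex on the third and fourth edges have non-negative coordinates (when $1-cd=-1$) forces $\mu+c\nu\le 0$, while the requirement that $(0,\nu)$ satisfy the third inequality forces $\mu+c\nu\ge 0$, so three edges pass through one point and the combinatorics collapses --- this works uniformly in all four sign cases, and the resulting vanishing $c_jd_i=0$ for all $i,j$ does yield the dichotomy. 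The one caveat worth making explicit is the normalization mismatch you flag at the end: Definition \ref{bundledefinition} fixes \emph{all} of $\kappa_1=\dots=\kappa_{r+s+1}=1$, and since $\sum_{i\le r}\eta_i+\eta_{r+1}=0$ the quantity $\kappa_1+\dots+\kappa_{r+1}$ is translation-invariant, so a genuine rescaling (not merely an integral affine equivalence) is needed to put the base simplex at edge length $r+1$; this is consistent with the paper's normalization $\omega(\ell)=r+1$ of the fiber and base forms, but it means the conclusion is ``affine equivalent to $\Delta_{\ba}^\kappa$ after rescaling,'' which is how the lemma is actually used in Lemma \ref{toricclassifyinglemma}. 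Likewise, making the $d_i$ non-negative requires re-basing at the vertex with the smallest fiber, as the paper does separately in Section 2; neither point is a gap in your argument.
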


For the rest of the paper, we will only be interested in polytopes $\Delta$ which are $\Delta_r$ bundles over $\Delta_s$ for some choice of $r,s$, which as in Definition \ref{bundledefinition} are determined by pairs $(\ba;\kappa)$.

Using the above presentation we see that the vector $a=(a_1,\ldots,a_r)$ determines the underlying bundle structure of the corresponding manifold $M$, while the constant $\kappa$ determines how much of the structure of the base $\widehat{M}$ is pulled back to the total space.

We reinterpret $\kappa$ in terms of the volume of the polytope to relate the above choice of $(\ba;\kappa)$ to the choice given at the beginning of the paper.  We have the following lemma.

\begin{lemma}\label{volumetheorem}
\[
Vol(\Delta_{\ba}^\kappa)=\tfrac{1}{r!}\tfrac{1}{s!}(r+1)^r(\kappa+s)^s
\]
\end{lemma}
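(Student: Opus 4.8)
The plan is to compute the Euclidean volume of $\Delta_{\ba}^\kappa$ by slicing it over its base simplex, using the fibered description worked out in Remark \ref{bundlerescalingremark}. There the projection onto the first $r$ coordinates is shown to carry $\Delta_{\ba}^\kappa$ onto a standard copy of $\Delta_r$, with the slice over a point $x=(x_1,\dots,x_r)$ of this base being the simplex $\Delta_s^{\kappa,x}$, namely a standard $s$-simplex of edge length $\kappa+s+a_1x_1+\dots+a_rx_r$. So I would first record the two elementary facts needed, both immediate from Example \ref{projectiveexample} by applying a homothety of ratio $\tfrac{L}{k+1}$: a standard $k$-simplex of edge length $L$ has volume $\tfrac{1}{k!}L^k$, and in particular $\Vol(\Delta_r)=\tfrac{1}{r!}(r+1)^r$.

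Then Fubini's theorem gives
\[
\Vol(\Delta_{\ba}^\kappa)=\int_{\Delta_r}\Vol\bigl(\Delta_s^{\kappa,x}\bigr)\,dx=\frac{1}{s!}\int_{\Delta_r}\bigl(\kappa+s+a_1x_1+\dots+a_rx_r\bigr)^s\,dx ,
\]
and it remains to evaluate this integral. Expanding the $s$-th power by the binomial theorem and integrating term by term, the constant term alone contributes $(\kappa+s)^s\,\Vol(\Delta_r)=\tfrac{1}{r!\,s!}(r+1)^r(\kappa+s)^s$, which is exactly the claimed value; so the real content is to understand what the terms involving the linear form $\ell(x):=a_1x_1+\dots+a_rx_r$ contribute. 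The degree-one term is governed by the first moments $\int_{\Delta_r}x_i\,dx$, and these all vanish because the barycenter of $\Delta_r$ is the origin (the $r+1$ vertices of $\Delta_r$ sum to $0$). Geometrically this says that the slice over the barycenter is exactly the scaled simplex $\Delta_s^\kappa$ of edge length $\kappa+s$, so that the "average fiber'' is $\Delta_s^\kappa$; in particular this already disposes of the case $s=1$.

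The step I expect to be the main obstacle is therefore the evaluation of the remaining moments $\int_{\Delta_r}\ell(x)^k\,dx$ for $2\leq k\leq s$ when $s\geq 2$, i.e. controlling the full Taylor expansion of the integrand in $\ba$ rather than just its linear part. This is a concrete computation: a convenient route is the substitution $y_i=x_i+1$, which identifies $\Delta_r$ with the dilate $(r+1)\Sigma_r$ of the standard simplex $\Sigma_r=\{y\geq 0,\ \sum_i y_i\leq 1\}$ and rewrites $\kappa+s+\ell(x)$ as $(\kappa-K_{\ba})+\langle\ba,y\rangle$ with $\kappa-K_{\ba}>0$, reducing the problem to standard Dirichlet-type integrals over $\Sigma_r$ (or, alternatively, one pushes the linear form $\langle\ba,y\rangle$ forward to a single real variable and integrates). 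Once that integral has been carried out, reordering the resulting terms into the form in the statement of Lemma \ref{volumetheorem} finishes the proof.
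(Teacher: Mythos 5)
Your Fubini reduction is the right rigorous framework, and your observation that the first moments vanish (the barycenter of $\Delta_r$ is the origin) does correctly dispose of $s=1$. But the step you flag as ``the main obstacle'' is a genuine gap that cannot be closed: the higher moments $\int_{\Delta_r}\ell(x)^k\,dx$ for $2\leq k\leq s$ do not vanish and do not reorganize into the stated right-hand side, so for $s\geq 2$ and $\ba\neq 0$ the identity you are trying to prove is simply not the value of your integral. Concretely, take $r=1$, $s=2$, $\ba=(a)$: the base is the segment $[-1,1]$ and the fiber over $x_1$ is a $2$-simplex of edge length $\kappa+2+ax_1$, hence of area $\tfrac12(\kappa+2+ax_1)^2$, so
\[
\Vol(\Delta_{(a)}^{\kappa})=\int_{-1}^{1}\tfrac12(\kappa+2+ax_1)^2\,dx_1=(\kappa+2)^2+\tfrac{a^2}{3},
\]
whereas the lemma asserts $(\kappa+2)^2$. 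What your expansion actually shows is that $\tfrac{1}{r!\,s!}(r+1)^r(\kappa+s)^s$ is only the contribution of the constant and linear parts of $(\kappa+s+\ell(x))^s$; the quadratic and higher moments of $\ell$ contribute strictly positive corrections whenever $s\geq 2$ and $\ba\neq 0$.

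For comparison, the paper's own proof asserts that because the fiber's \emph{size} depends linearly on the base point and $\Delta_s^{\kappa}$ is the fiber over the barycenter, the total volume equals $\Vol(\Delta_r)\cdot\Vol(\Delta_s^{\kappa})$. That inference is valid only when the fiber's \emph{volume} is an affine function of the base point, i.e.\ when $s=1$ or $\ba=0$; for $s\geq 2$ the fiber volume is the $s$-th power of an affine function, and Jensen's inequality already forces the average fiber volume to exceed the fiber volume at the barycenter strictly unless $\ell$ is constant. So your proposal is not wrong in approach --- it is the paper's argument made honest --- but it leads to the conclusion that the lemma as stated needs either the hypothesis $s=1$ (or $\ba=0$), or a corrected right-hand side incorporating the moments $\int_{\Delta_r}\ell(x)^k\,dx$ you identified; correspondingly, the deduction of $\kappa=\kappa'$ from equality of volumes in the proof of Lemma \ref{toricstructurelemma} would also need to be revisited.
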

\begin{proof}Consider the polytope $\Delta_{\ba}^\kappa$.  As we saw before, this geometrically looks like a standard copy of $\Delta_r$ with a rescaled copy of $\Delta_s$ over each point.  The point $(0,\ldots,0)$ is the barycenter of the standard copy of $\Delta_r$, and the copy of $\Delta_s$ over this point is the rescaled polytope $\Delta_s^\kappa$ discussed in Remark \ref{bundlerescalingremark}.  We recall that it has the form of a standard $s$ simplex with side length $s+\kappa$.  Now, since the sizes of the rescaled copies of $\bC P^s$ over the base copy of $\bC P^r$ depend linearly on the coordinates in $\bC P^r$, and $\Delta_s^\kappa$ is the $\Delta_s$ over the barycenter, we know that
\[
Vol(\Delta_{\ba}^\kappa)=Vol(\Delta_r)Vol(\Delta_s^\kappa).
\]
However, a simple geometric argument shows that
\[
Vol(\Delta_s^\kappa)=\tfrac{1}{s!}(\kappa+s)^s\qquad Vol(\Delta_r)=\tfrac{1}{r!}(r+1)^r.
\]
\end{proof}
Thus, the tuple $(\ba;\kappa)$ can be interpreted as $\ba$ determining the bundle structure and $\kappa$ determining the volume.

Also, as we see in section $2.4$ of \cite{D}, we can restrict to the case where $a_i\geq 0$ for all $i$.  To see this, recall from before that $\Delta_{\ba}^\kappa$ has a standard copy of $\Delta_r$ with each point having a rescaled $\Delta_s$ over it.  Also, we know that for any vertex of $\Delta$, we can choose cordinates around that vertex so that the edge directions from that vertex are the standard vectors $e_1,\ldots,e_n$.  If we choose coordinates for $\Delta$ around the point of $\Delta_r$ with the "smallest" copy of $\Delta_s$, then by the interpretation of $-a_i$ as the slopes of linear changes in the standard coordinate directions, we have $-a_i\leq 0$ for all $i$, which means $a_i\geq 0$ for all $i$.

We now prove Lemma \ref{toricstructurelemma}, which we recall states that
\[
(M_{\ba},\omega_{\ba}^\kappa,T_{\ba})\cong(M_{\bb},\omega_{\bb}^\kappa,T_{\bb})~\Longleftrightarrow~(\ba;\kappa)=(\bb;\kappa),
\]
where $\cong$ denotes the relation of equivariant symplectomorphism.

\begin{proof}[Proof of Lemma \ref{toricstructurelemma}]First, we notice that if $(\ba;\kappa)=(\bb;\kappa')$, then the manifolds are equivalent.  It remains to show that if the manifolds are equivalent, then $(\ba;\kappa)=(\bb;\kappa')$.  In particular, we show that if $\Delta_{\ba}^\kappa$ is affine equivalent to $\Delta_{\bb}^\kappa$, then $(\ba;\kappa)=(\bb;\kappa')$.

Since $\Delta_{\ba}^\kappa$ is affine equivalent to $\Delta_{\ba}^\kappa$ and affine equivalences preserve volume, $\Vol(\Delta_{\ba}^\kappa)\neq \Vol(\Delta_{\bb}^{\kappa'})$.  Then, a simple application of Lemma \ref{volumetheorem} shows that $\kappa=\kappa'$.  We now show that $\ba=\bb$.

As in Remark \ref{bundlerescalingremark}, the polytope $\Delta_{\ba}^\kappa$ consists of a standard copy of $\Delta_r$ with a rescaled copy of $\Delta_s$ over each point.  Furthermore, as we move in the direction $e_i$ in the base copy of $\Delta_r$, the edge lengths of the specific copy of $\Delta_s$ increase linearly with slope $a_i$.  Thus, we have exactly $r+1$ different $s$-dimensional faces of $\Delta$, which are all copies of $\Delta_s$ of various sizes sitting over the $r+1$ vertices of this $\Delta_r$.  In particular, combining remark \ref{bundlerescalingremark} with lemma \ref{volumetheorem}, we can easily compute that volume of the smallest such $\Delta_s$ is $\tfrac{1}{s!}(\kappa+s-\sigma_1(\ba))$, while the volumes of the other $s$ faces will be $\tfrac{1}{s!}(\kappa+s-\sigma_1(\ba)+a_i)$.

Similarly, in $\Delta_{\bb}^\kappa$, there are $r+1$ different $s$-dimensional faces which are copies of $\Delta_s$, and their volumes are given by $\tfrac{1}{s!}(\kappa+s-\sigma_1(\bb))$ and $(\tfrac{1}{s!}(\kappa+s-\sigma_1(\bb)+b_i)$.  Now, if there is an affine equivalence from $\Delta_{\ba}^\kappa$ to $\Delta_{\bb}^\kappa$, it would have to send the $r+1$ copies of $\Delta_s$ in $\Delta_{\ba}^\kappa$ to the corresponding copies of $\Delta_s$ in $\Delta_{\ba}^\kappa$ while preserving their volumes.  In particular, by the above computations, this implies that $\sigma_1(\ba)=\sigma(\bb)$ and furthermore that for each $i$, there is a $j$ so that $a_i=b_j$.  But the assumption that $0\leq a_1\leq\ldots\leq a_r$ and $0\leq b_1\leq\ldots\leq b_r$ implies that for each $i$, we can make the choice so that $a_i=b_i$.  Thus, if the polytopes are affine equivalent, then $(\ba;\kappa)=(\bb;\kappa')$, as desired.
\end{proof}

Now we will get into some of the more technical lemmas we will need for the proofs of our results.

\begin{lemma}\label{B=0homologicallemma}Let $r,s\geq 1$ be integers with $r>1$, and $\ba=(a_1,\ldots,a_r)$ be a non-negative integer vector with some $a_i\neq 0$.  Assume $H^*(M;\bZ)$ is isomorphic to the graded ring generated by $\alpha_0$ and $\beta_0$ of $H^2(M)$ with relations
\[
\alpha_0^{s+1}=0,\qquad \beta_0\prod_{i=1}^r(\beta_0-a_i\alpha_0)=0,
\]
Then if there exist integers $A,B$ so that $(A\alpha_0+B\beta_0)^{s+1}=0$, we must have $B=0$.
\end{lemma}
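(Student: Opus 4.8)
The plan is to work inside the graded ring $R = \bZ[\alpha_0,\beta_0]/(\alpha_0^{s+1},\ \beta_0\prod_{i=1}^r(\beta_0-a_i\alpha_0))$ and analyze the element $(A\alpha_0 + B\beta_0)^{s+1}$ directly, assuming $B \neq 0$ and deriving a contradiction. The key structural fact is that, since $\alpha_0^{s+1}=0$, the degree-$2(s+1)$ part of $R$ is spanned by monomials $\alpha_0^{s+1-k}\beta_0^k$ with $1 \le k \le s+1$, and the relation $\beta_0\prod(\beta_0-a_i\alpha_0)=0$ lets us rewrite any $\beta_0^{r+1}$ (times anything) in terms of lower powers of $\beta_0$. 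So first I would expand $(A\alpha_0+B\beta_0)^{s+1} = \sum_{k=0}^{s+1}\binom{s+1}{k}A^{s+1-k}B^k\alpha_0^{s+1-k}\beta_0^k$, drop the $k=0$ term (it is $A^{s+1}\alpha_0^{s+1}=0$), and then reduce using the two relations to express the result as a $\bZ$-linear combination of a fixed basis of $R_{2(s+1)}$.

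The second step is to pin down that basis and isolate one coefficient whose vanishing forces $B=0$. Concretely, I expect the highest surviving power of $\beta_0$ to be the cleanest handle: look at the coefficient of the top-degree monomial in $\beta_0$ among those not killed by the relations. If $r \ge s$, then $\beta_0^{s+1}$ (and hence $\alpha_0^0\beta_0^{s+1}$, interpreting degrees correctly — more precisely the monomial $\beta_0^{s+1}$, which has the right total degree and is not yet reducible since the $\beta_0$-relation only applies at power $r+1 > s$) is a basis element, and its coefficient in the expansion is exactly $B^{s+1}$, forcing $B^{s+1}=0$, hence $B=0$. If $r < s$ the relation $\beta_0\prod(\beta_0-a_i\alpha_0)=0$ does kick in; then I would reduce $\beta_0^k$ for $k > r$ via $\beta_0^{r+1} = \sum_{j=1}^{r}(-1)^{j-1}\sigma_j(\ba)\alpha_0^{j}\beta_0^{r+1-j}$ (from expanding the product relation) and track the coefficient of $\alpha_0^{s+1-r}\beta_0^{r}$, or more carefully of the monomial $\alpha_0^{s-r}\beta_0^{r+1}\cdot(\text{one more reduction})$; in any case the leading contribution in $B$ to the lowest $\alpha_0$-power surviving is a nonzero integer multiple of a power of $B$, again forcing $B=0$. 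The point in both regimes is that $A$ can only enter multiplied by a positive power of $\alpha_0$, so the "pure $\beta_0$, minimal $\alpha_0$" part of the expansion is governed by $B$ alone.

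The main obstacle is the bookkeeping in the case $r < s$: there the single relation must be applied iteratively to bring every $\beta_0^k$ with $k$ up to $s+1$ down below degree $r+1$ in $\beta_0$, and one must check that no cancellation among the resulting terms kills the coefficient one wants to use. The cleanest way to handle this uniformly is probably to pass to the quotient ring $R / (\alpha_0) \cong \bZ[\beta_0]/(\beta_0^{\min\{s+1,\,r+1\}})$: the image of $(A\alpha_0+B\beta_0)^{s+1}$ there is $B^{s+1}\beta_0^{s+1}$, and this is nonzero in the quotient precisely when $s+1 \le \min\{s+1,r+1\}$, i.e. always when $r \ge s$, giving $B=0$ immediately; for $r<s$ one instead quotients by $(\alpha_0^{2})$ or works modulo $(\alpha_0)$-adic filtration and examines the leading term, where the coefficient of the lowest $\alpha_0$-power is visibly $B^{s+1}$ times a unit. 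I would structure the final write-up around this quotient trick, which sidesteps most of the symmetric-function algebra, and only invoke the explicit reduction $\beta_0^{r+1}=\sum(-1)^{j-1}\sigma_j(\ba)\alpha_0^j\beta_0^{r+1-j}$ if the referee wants the $r<s$ case spelled out term by term. Since $r>1$ and some $a_i \neq 0$ by hypothesis, the relations are genuinely as stated and no degenerate collapse of the ring occurs, so the argument goes through.
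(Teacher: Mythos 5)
Your reduction-to-a-basis setup is fine, and the case $r>s$ is correct: there every monomial $\alpha_0^{s+1-k}\beta_0^k$ with $1\le k\le s+1$ lies in the additive basis $\{\alpha_0^i\beta_0^j:0\le i\le s,\ 0\le j\le r\}$, so the coefficient $B^{s+1}$ of $\beta_0^{s+1}$ must vanish. The gap is in the case $r\le s$, which is the substantive part of the lemma. First, there is an off-by-one error: the relation rewrites $\beta_0^{r+1}$, so $\beta_0^{s+1}$ is a basis element only when $s+1\le r$, i.e.\ $s<r$; when $r=s$ the monomial $\beta_0^{s+1}=\beta_0^{r+1}$ is itself reducible and its ``coefficient'' cannot be read off. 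Your quotient is also computed incorrectly: $R/(\alpha_0)\cong\bZ[\beta_0]/(\beta_0^{r+1})$, not $\bZ[\beta_0]/(\beta_0^{\min\{s+1,r+1\}})$, and in it $\beta_0^{s+1}=0$ precisely when $r\le s$ --- so the quotient detects nothing in exactly the cases that need work. The fallback devices for $r<s$ also fail: every basis monomial of degree $2(s+1)$ is $\alpha_0^{s+1-j}\beta_0^j$ with $j\le r$, hence has $\alpha_0$-exponent at least $s+1-r\ge 2$, so reducing mod $\alpha_0^2$ kills the entire element; and the coefficient of the lowest surviving $\alpha_0$-power, $\alpha_0^{s+1-r}\beta_0^r$, is not a unit times $B^{s+1}$ but $\sum_{m\ge 0}\tbinom{s+1}{r+m}A^{s+1-r-m}B^{r+m}h_m(\ba)$, where $h_m$ is the complete homogeneous symmetric polynomial (e.g.\ $h_1=\sigma_1$, $h_2=\sigma_1^2-\sigma_2$); its vanishing does not visibly force $B=0$.

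A symptom of the gap is that your argument for $r\le s$ never uses the hypotheses $r>1$ and $\ba\ne 0$, yet the statement is false without them: for $r=s=1$, $\ba=(2)$ one has $(\beta_0-\alpha_0)^2=\beta_0^2-2\alpha_0\beta_0=0$, and for $\ba=0$ with $r\le s$ one has $\beta_0^{s+1}=0$. The paper's proof (following Choi--Masuda--Suh) handles $r\le s$ by lifting to $\bZ[\alpha_0,\beta_0]$: one writes $(A\alpha_0+B\beta_0)^{s+1}-A^{s+1}\alpha_0^{s+1}=D\,\beta_0\prod_i(\beta_0-a_i\alpha_0)$ with $D$ homogeneous of degree $s-r$, and compares factorizations. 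Setting $X=A\alpha_0$, $Y=B\beta_0$, the left side is $(X+Y)^{s+1}-X^{s+1}=\prod_{\zeta^{s+1}=1}\bigl(Y+(1-\zeta)X\bigr)$, which has at most two linear factors over $\mathbb{Q}$ (from $\zeta=\pm1$), while the right side, if nonzero, is divisible by the $r+1\ge 3$ linear forms $\beta_0$ and $\beta_0-a_i\alpha_0$. That is where $r>1$ and $a_i\ne 0$ enter, and some such use of these hypotheses is unavoidable; if you want to stay inside the quotient ring, setting all $r$ basis coefficients to zero just reproduces this ideal-membership identity, so you would end up reconstructing the same factorization argument.
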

\begin{proof}
This a slight restatement of Lemma $6.2$ in \cite{CMS}.  We follow their proof closely.  Since $(A\alpha_0+B\beta_0)^{s+1}=0$, $(A\alpha_0+B\beta_0)^{s+1}$ must be a consequence of our other relations.  Namely, there exists $C,D$ so that
\[
(A\alpha_0+B\beta_0)^{s+1}-C\alpha_0^{s+1}=D\beta_0\prod_{i=1}^r(\beta_0-a_i\alpha_0),
\]
where $C$ is an integer and $D$ is an integer polynomial in $\alpha_0$ and $\beta_0$ of degree $s-r$ if $r\leq s$, and $D=0$ if $r>s$.

If $r>s$, we then have $(A\alpha_0+B\beta_0)^{s+1}-C\alpha_0^{s+1}=0$, which gives $B=0$ and $C=A^{s+1}$, as desired.

Consider now $r\leq s$.  Suppose first that $A=0$.  Since the right hand side has no pure $\alpha_0$ terms and $A=0$, we must have $C=0$ and the left hand side is only a $\beta_0^{s+1}$ term.  But some $a_i\neq 0$, so that the right hand side has a non-zero $\beta_0^{r+1}$ term and a non-zero $\alpha_0\beta_0^r$ term, which is a contradiction. Thus, $A\neq 0$.  Now, since the right hand side has no pure $\alpha_0$ terms and $A\neq 0$, we must have $C=A^{s+1}$ to cancel the $\alpha_0^{s+1}$ term from the left hand side.  If now $B\neq 0$, the remaining terms on the left hand side can be expressed as a polynomial in $\alpha_0$ and $\beta_0$ with no more than $2$ linear factors when optimally factored, while the right hand side has at least three linear factors since some $r>1$, so that the two polynomials can never be equal.  We briefly describe the
factorization of the LHS.  First, Let $A\alpha_0=X$ and $B\beta_0=Y$.  Then, since $C=A^{s+1}$, the LHS can be expressed as
\[
(X-Y)^{s+1}-X^{s+1}
\]
and this has no more than $2$ linear factors, as claimed.  This contradiction establishes that $B=0$.
\end{proof}

We will now prove Proposition \ref{mainlemma}, which we use heavily in the proofs of our main theorems.

\begin{proof}[Proof of Proposition \ref{mainlemma}]We will prove that $(1)\Rightarrow(2)\Rightarrow(4)\Rightarrow(1)$ and also $(2)\Leftrightarrow(3)$.

First, we prove that $(1)\Rightarrow(2)$.  This is the hardest direction of the proof, and we will break it into three cases.  First assume that $r>1$.  This proof is taken from Theorem $6.1$ of \cite{CMS}.  The Stanley-Reisner presentation of $H^*(M_{\ba};\bZ)$ on $\Delta_{\ba}$ gives generators $\alpha_0$ and $\beta_0$ for $H^*(M_{\ba};\bZ)$ satisfying
\begin{equation*}
\alpha_0^{s+1}=0
\tag{$\alpha_0$}\end{equation*}
\begin{equation*}
\beta_0\prod_{i=1}^r(\beta_0-a_i\alpha_0)=0
\tag{$\beta_0$}\end{equation*}
and similarly, from the polytope $\Delta_{\bb}$, we get generators $\alpha$ and $\beta$ of $H^*(M_{\bb};\bZ)$ with the relations
\begin{equation*}
\alpha^{s+1}=0
\tag{$\alpha$}\end{equation*}
\begin{equation*}
\beta\prod_{i=1}^r(\beta-b_i\alpha)=0.
\tag{$\beta$}\end{equation*}
Since $H^*(M_{\ba})$ is isomorphic to $H^*(M_{\bb})$, there exist integers $A,B,C,D$ with $AD-BC=1$ so that
\[
\alpha_0=A\alpha+B\beta,\qquad\beta_0=C\alpha+D\beta
\]
Using $\alpha_0^{s+1}=0$ and Lemma \ref{B=0homologicallemma}, we conclude that $B=0$, so that $A=D=\pm 1$.  Moreover we can arrange $A=D=1$ by possibly changing the signs of both $\alpha$ and $\beta$.  Now we substitute $\beta_0=C\alpha+\beta$ and $\alpha_0=\alpha$ into the relation $(\beta_0)$, and since the relation $(\beta_0)$ must equal the relation $(\beta)$, we know that the two polynomials are equal as polynomials in $\beta$.  Substituting the specific value $\beta=1$, we obtain the relation
\begin{equation*}
\prod_{i=0}^r(1+(-a_i+C)\alpha)=\prod_{i=0}^r(1-b_i\alpha)
\tag{*}\end{equation*}
where we assume that $a_0=b_0=0$.

But the left hand side is just the total Chern class of the bundle
\[
[L_0\oplus L_{-a_1}\oplus\ldots\oplus L_{-a_r}]\otimes L_C
\]
while the right hand side is the total Chern class of the bundle
\[
L_0\oplus L_{-b_1}\oplus\ldots\oplus L_{-b_r}.
\]
Thus, since these two bundles have the same total Chern class and are sums of line bundles, they are isomorphic as vector bundles, i.e.
\[
[L_0\oplus L_{-a_1}\oplus\ldots\oplus L_{-a_r}]\otimes L_C\cong(L_0\oplus L_{-b_1}\oplus\ldots\oplus L_{-b_r}).
\]
But the above shows that
\[
\bP(L_0\oplus L_{-a_1}\oplus\ldots\oplus L_{-a_r})=\bP(L_0\oplus L_{-a_1}\oplus\ldots\oplus L_{-a_r}),
\]
as desired.

Now, consider the case $r=s=1$.  Then $\ba=(a)$, $\bb=(b)$, and we know that $M_{\ba}$ and $M_{\bb}$ are just the Hirzebruch surfaces $H_a$ and $H_b$ respectively.  Repeated application of Lemma \ref{symplectomorphismlemma} then implies that $H_a$ is symplectomorphic to $H_b$ if $b-a$ is even.  A simple computation shows that $H_0\ncong H_1$, so that in fact $H_a$ is symplectomorphic to $H_b$ if and only if $b-a$ is even.  In particular, $H^*(M_{\ba};\bZ)\cong H^*(M_{\bb};\bZ)$ if and only if $b-a$ is even.  But then $C=\tfrac{a-b}{2}$ is an integer.  Now let $a_0=b_0=0$ $\alpha$ is as before.  In particular, $\alpha^2=0$ and a simple computation shows
\[
\prod_{i=0}^1(1+(-a_i+C)\alpha)=(1+C\alpha)(1+(C-a)\alpha)=1+(2C-a)\alpha=1-b\alpha=\prod_{i=0}^1(1-b_i\alpha),
\]
which implies condition $(2)$ as above.

Lastly, consider the case $r=1$, $s\geq 2$.  As before, $\ba=(a)$ and $\bb=(b)$.  Using the Stanley-Reisner presentation, we get $\alpha_0$, $\beta_0$, $\alpha$ and $\beta$ as before, with integers $A$, $B$, $C$, and $D$ with $AD-BC=1$ and
\[
\alpha_0=A\alpha+B\beta,\qquad\beta_0=C\alpha+D\beta
\]
Now, recall from equations $(\beta_0)$ and $(\beta)$ above that $\beta_0(\beta_0-b\alpha_0)=0$ and $\beta(\beta-a\alpha)=0$Substituting from the above, expanding, and simplifying, we get
\begin{align*}
0&=(C\alpha+D\beta)((C\alpha+D\beta)-b(A\alpha+B\beta))\\
&=(C(C-bA))\alpha^2+(C(D-bB)+D(C-bA))\alpha\beta+(D(D-bB))\beta^2\\
&=(C(C-b)A)\alpha^2+(C(D-bB)+D(C-bA)+a(D(D-bB)))\alpha\beta.
\end{align*}
Also, since $s\geq 2$, equation $(\alpha)$ tell us that $\alpha^2\neq 0$ and $\alpha\beta\neq 0$, which means that
\[
C(C-bA)=0\qquad C(D-bB)+D(C-bA)=-a(D(D-bB)).
\]
$C(C-bA)=0$ implies that either $C=0$ or $C=bA$.  If $C=0$, then by $AD-BC=1$, we know that $A=D=\pm 1$, where by changing signs of $\alpha$ and $\beta$ if necessary, we can arrange $A=D=1$.  Substituting into the above, this tells us that
\[
-b=-a(1-bB)
\]
so that $b$ divides $a$.

Now, assume $C=bA$.  Then $AD-BC=1$ implies that $AD-bAB=1$ so $A(D-bB)=1$ which says that $A=D-bB=\pm 1$, where as before we can arrange $A=D-bB=1$.  Then substituting as before, we get
\[
b=-aD
\]
so that again $b$ divides $a$.  Thus, in either case, we have $b$ divides $a$.  But switching the roles of $a$ and $b$, we clearly also have $a$ divides $b$, so that in fact, $a=b$.  Thus, we clearly have
\[
\prod_{i=0}^1(1-a_i\alpha)=\prod_{i=0}^r(1-b_i\alpha)
\]
which implies condition $(2)$ as before.  Thus, we conclude that $(1)\Rightarrow (2)$.

We next prove $(2)\Rightarrow(4)$.  Now, since we are assuming that $\bP(L_0\oplus L_{-a_1}\oplus\ldots\oplus L_{-a_r})$ is isomorphic to $\bP(L_0\oplus L_{-b_1}\oplus\ldots\oplus L_{-b_r})$ as a projective vector bundle, we know that there is some $C$ so that $(L_0\oplus L_{-a_1}\oplus\ldots\oplus L_{-a_r})\otimes L_C$ is isomorphic to $L_0\oplus L_{-b_1}\oplus\ldots\oplus L_{-b_r}$ as vector bundles, which implies that they have the same total Chern class, which gives us the relation

\begin{equation*}
\prod_{i=0}^r(1+(-a_i+C)\alpha)=\prod_{i=0}^r(1-b_i\alpha)
\tag{*}\end{equation*}
where we assume that $a_0=b_0=0$ as before.  Here, we have assumed that $\alpha$ is the standard generator of $\bC P^s$, so that $L_{\ba}$ is the line bundle over $\bC P^s$ with first Chern class given by $a\alpha$.  Since $\alpha^{s+1}=0$, we know by expanding and comparing coefficients of $\alpha^i$ that the above equation is true if and only if:
\[
\sigma_i(C,C-a_1,\ldots,C-a_r)=\sigma_i(0,-b_1,\ldots,-b_r)\quad 1\leq i\leq \min\{r+1,s\}
\]
which in turn is true if and only if
\[
\sigma_i(C,a_1+C,\ldots,a_r+C)=\sigma_i(0,b_1,\ldots,b_r)
\]
where we have replaced $-C$ by $C$ as the arbitrary constant.  This finishes the proof that $(2)\Rightarrow(4)$.

Next we show that $(4)\Rightarrow(1)$.  By $(4)$, we know that there exists a constant $C$ so that
\begin{equation*}
\prod_{i=0}^r(1+(-a_i+C)\alpha)=\prod_{i=0}^r(1-b_i\alpha)
\tag{*}\end{equation*}
where as above, $\alpha$ is the standard generator of $H^2(\bC P^s)$.  As before, this implies condition $(2)$, which implies condition $(1)$.

It remains to show $(2)\Leftrightarrow(3)$.  In both cases, the manifold $M$ is a smooth $\bC P^r$ bundle over $\bC P^s$.  The difference is that in $(2)$, we are considering it as a projective vector bundle, so that the structure group of the bundle is $PU(r+1)$, whereas in condition $(3)$, we are considering it as a symplectic bundle, so that the structure group of the bundle is $\Symp(\bC P^r)$.  Thus, the fact that $(2)\Rightarrow(3)$ follows immediately from the fact $PU(r+1)\subset\Symp(\bC P^r)$.

It remains to show that $(3)\Rightarrow(2)$.  However, as is shown in \cite{Rez}, there is a natural extension of the notion of Chern classes to symplectic bundles.  Thus, since we have two isomorphic symplectic bundles, they have equal total Chern classes in the symplectic sense, which implies that they have equal total Chern class in the projective sense.  Thus, there is a constant $C$ so that the bundles $(L_0\oplus L_{-a_1}\oplus\ldots\oplus L_{-a_r})\otimes L_C$ and $(L_0\oplus L_{-b_1}\oplus\ldots\oplus L_{-b_r})$ have the same total Chern class, which as before implies that they are isomorphic as vector bundles.  This in turn implies the condition $(2)$.
\end{proof}

Lastly, we need a couple more lemmas to characterize the possible moment polytopes of toric structures on symplectic toric bundles.  First, we recall the following theorem from \cite{Tim}

\begin{lemma}\label{combinatoriallyequivalentlemma}(\cite{Tim} Prop $1.1.1$) Let $\Delta$ be a polytope of dimension $n$ with $n+2$ facets.  Then there exists and $k,m$ with $k+m=n$ so that $\Delta$ is combinatorially equivalent to $\Delta_k\times\Delta_m$.
\end{lemma}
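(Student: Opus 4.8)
The plan is to pass to polar duals, where the statement becomes a clean classification via Gale transforms. Combinatorial equivalence is preserved by polar duality, and a simple $n$-polytope $\Delta$ with $n+2$ facets is dual to a \emph{simplicial} $n$-polytope $P=\Delta^{*}$ with $n+2$ vertices; since a simplex is self-dual and polar duality interchanges Cartesian products with free (direct) sums, $(\Delta_k\times\Delta_m)^{*}\cong\Delta_k\oplus\Delta_m$. So it suffices to prove that a simplicial $n$-polytope $P$ with $n+2$ vertices is combinatorially equivalent to $\Delta_k\oplus\Delta_{n-k}$ for suitable $k$. (Simplicity of $\Delta$ is genuinely used: a square pyramid is a non-simple $3$-polytope with $5$ facets which is not a product of simplices. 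All polytopes occurring in this paper are moment polytopes, hence simple.)

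Next I would invoke the Gale transform of the vertex configuration $v_1,\dots,v_{n+2}$ of $P$: it is a configuration of $n+2$ vectors $\bar v_1,\dots,\bar v_{n+2}$ in $\bR^{(n+2)-n-1}=\bR^{1}$, i.e.\ just $n+2$ real numbers. By the standard dictionary (Gr\"unbaum; see also Ziegler's \emph{Lectures on Polytopes}), a subset $Y$ of the vertices spans a face of $P$ if and only if $0\in\operatorname{relint}\operatorname{conv}\{\bar v_i:v_i\notin Y\}$; for numbers on a line this condition depends only on the vector of signs $(\varepsilon_1,\dots,\varepsilon_{n+2})\in\{+,0,-\}^{n+2}$, so the combinatorial type of $P$ is determined by this sign vector. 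Taking $Y=\{v_i\}$, the requirement that every $v_i$ actually be a vertex translates into: for each $i$ the remaining $\bar v_j$ contain both a positive and a negative value; hence at least two of the $\varepsilon_i$ equal $+$ and at least two equal $-$, and $P$ being simplicial forces none of them to be $0$. Thus the sign vector consists of $p\ge 2$ pluses and $q\ge 2$ minuses, with $p+q=n+2$.

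Finally I would identify the polytope. Set $Q:=\Delta_{p-1}\oplus\Delta_{q-1}$, which has $p+q=n+2$ vertices and dimension $(p-1)+(q-1)=n$. Realizing $\Delta_{p-1}$ and $\Delta_{q-1}$ as regular simplices centred at the origin in complementary coordinate subspaces, the vertices of $Q$ are $\{(w_i,0)\}_{i=0}^{p-1}\cup\{(0,u_j)\}_{j=0}^{q-1}$ with $\sum_i w_i=0$ and $\sum_j u_j=0$; the unique (up to scaling) affine dependence among them assigns weight $+q$ to every $w$-vertex and $-p$ to every $u$-vertex, so the sign vector of $Q$ is ``$p$ pluses and $q$ minuses'' — the same as that of $P$. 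Since the combinatorial type is determined by the sign vector, $P\cong Q$, and dualizing back yields $\Delta\cong\Delta_{p-1}\times\Delta_{q-1}$ with $k=p-1$, $m=q-1$ and $k+m=n$.

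The step carrying the real content is the Gale dictionary used in the second paragraph — that the combinatorial type of a $d$-polytope with $d+2$ vertices is read off from the signs in its Gale transform, and that ``every $v_i$ is a vertex'' amounts to ``$p,q\ge 2$''. These are classical, and the statement in exactly the form needed here is \cite{Tim}, Prop.~1.1.1. An alternative avoiding Gale duality is induction on $n$: each facet of a simple $n$-polytope with $n+2$ facets is a simple $(n-1)$-polytope with $n$ or $n+1$ facets, hence a product of two simplices by induction, after which one must analyze how these facets are glued together around $\Delta$ — correct, but with noticeably more combinatorial bookkeeping.
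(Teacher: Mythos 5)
Your argument is correct, but there is nothing in the paper to compare it against: the paper does not prove this lemma, it simply quotes it from Timorin (\cite{Tim}, Prop.\ 1.1.1). What you have written is the classical Gale--diagram classification of $n$-polytopes with $n+2$ facets (dually, of polytopes with $n+2$ vertices), essentially as in Gr\"unbaum, Ch.\ 6, and it is sound: the Gale transform of the $n+2$ vertices of the dual lives in $\bR^1$, the face lattice is read off from the sign vector, vertexhood of every point forces $p,q\ge 2$, and the sign vector with $p$ pluses and $q$ minuses is realized by $\Delta_{p-1}\oplus\Delta_{q-1}$, whose polar is $\Delta_{p-1}\times\Delta_{q-1}$. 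Two remarks. First, you are right that the lemma as stated in the paper silently omits the hypothesis that $\Delta$ be simple, without which it is false (square pyramid); Timorin's proposition is stated for simple polytopes, and every moment polytope is simple, so the omission is harmless in context, but your repair is the correct one. Second, the assertion that simpliciality of $P$ forces the Gale vector of no vertex to vanish deserves one more sentence: if $\bar v_i=0$ then $P$ is a pyramid with apex $v_i$ over the $(n-1)$-polytope spanned by the other $n+1$ vertices, and that base is a facet of $P$ with $(n-1)+2$ vertices, hence not a simplex, contradicting simpliciality. With that sentence added, the proof is complete and is a perfectly good self-contained substitute for the citation.
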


We use this to prove the following fundamental lemma.

\begin{lemma}\label{toricclassifyinglemma}
If $(M^{2n},\omega,T)$ is a symplectic toric manifold with $\dim H^2(M)=2$, then $M$ is a $\bC P^r$ bundle over $\bC P^s$, and hence symplectomorphic to some $(M_{\ba},\omega_{\ba}^\kappa)$.  Moreover, if $\ba\neq 0$, any other toric structure on $M$ is a $\bC P^r$ bundle over $\bC P^s$ for the same $r,s$.
\end{lemma}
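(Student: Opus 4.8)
The plan is to read off the combinatorial type of the moment polytope $\Delta$ of $(M,\omega,T)$ from the hypothesis $\dim H^2(M)=2$, feed it into the bundle-recognition lemmas, and then, for the ``moreover'' part, recover the base dimension from the cohomology ring of $M$. First I would use the standard fact that for a smooth compact toric manifold $M^{2n}$ the classes of the toric divisors generate $H^2(M)$ subject to exactly $n$ independent linear relations, so $\Delta$ has exactly $\dim H^2(M)+n=n+2$ facets. By Lemma \ref{combinatoriallyequivalentlemma} there are $k,m$ with $k+m=n$ such that $\Delta$ is combinatorially equivalent to $\Delta_k\times\Delta_m$; since the simplex $\Delta_n$ has only $n+1$ facets we must have $k,m\geq 1$, and I set $r=k$, $s=m$. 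A moment polytope is smooth, so Lemma \ref{bundlelemma} shows that $\Delta$ is a $\Delta_r$ bundle over $\Delta_s$ or a $\Delta_s$ bundle over $\Delta_r$; after possibly interchanging the roles of $r$ and $s$, Lemma \ref{bundlecorrespondencetheorem} identifies $(M,\omega,T)$ with a symplectic toric $\bC P^r$ bundle over $\bC P^s$, and Theorem \ref{toricclassifyingtheorem} then yields $(\ba;\kappa)$ with $(M,\omega)$ symplectomorphic to $(M_{\ba},\omega_{\ba}^\kappa)$.

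For the second statement, assume $\ba\neq 0$ and let $(M,\omega',T')$ be any other toric structure on the same manifold $M$. It again has $\dim H^2=2$, so the part just proved presents $M$ as a $\bC P^{r'}$ bundle over $\bC P^{s'}$ with $r'+s'=n$; by Theorem \ref{toricclassifyingtheorem} this structure is equivariantly symplectomorphic to $M_{\bb}$ for some vector $\bb$ in the $(r',s')$-framework. The relevant ring-theoretic invariant is
\[
\nu(M):=\min\bigl\{\,k\geq 1:\ x^k=0\ \text{for some primitive }x\in H^2(M;\bZ)\,\bigr\},
\]
and I claim that whenever $M$ is presented by a \emph{nonzero} vector the number $\nu(M)$ equals the base dimension plus one. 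In the Stanley--Reisner presentation $H^*(M_{\ba};\bZ)=\bZ[\alpha_0,\beta_0]/(\alpha_0^{s+1},\ \beta_0\prod_{i=1}^r(\beta_0-a_i\alpha_0))$ the base class $\alpha_0$ satisfies $\alpha_0^{s+1}=0\neq\alpha_0^s$, so $\nu(M)\leq s+1$. For the reverse inequality: when $r>1$, Lemma \ref{B=0homologicallemma} (whose hypotheses require $\ba\neq0$) forces any integral $x=A\alpha_0+B\beta_0$ with $x^{s+1}=0$ to have $B=0$, and if $x$ is primitive then $A=\pm1$, so $x^k=A^k\alpha_0^k\neq0$ for $k\leq s$; when $r=1$ one reaches the same conclusion by a direct computation, expanding $x^k$ in the basis $\{\alpha_0^i\beta_0^j:0\leq i\leq s,\ 0\leq j\leq 1\}$ of $H^*(M;\bZ)$ and using $\beta_0^j=a^{j-1}\alpha_0^{j-1}\beta_0$.

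Granting the claim, if $\bb\neq 0$ then applying it to both presentations gives $s'+1=\nu(M)=s+1$, so $s'=s$, $r'=r$, and the other toric structure exhibits $M$ as a $\bC P^r$ bundle over $\bC P^s$ with the correct fibre and base. If $\bb=0$, then $M$ is diffeomorphic to $\bC P^{r'}\times\bC P^{s'}$, and a short computation gives $\nu(M)=\min(r',s')+1$; with $\nu(M)=s+1$ and $r'+s'=r+s$ this forces $\{r',s'\}=\{r,s\}$, so $M\cong\bC P^r\times\bC P^s$. If $s\geq2$ this contradicts $\ba\neq0$ via Proposition \ref{mainlemma}, which would supply an integer $C$ with $\sigma_1=\sigma_2=0$ for the list $C,a_1+C,\dots,a_r+C$, whence their sum of squares $\sigma_1^2-2\sigma_2$ vanishes and they all vanish; if $s=1$ then $M\cong\bC P^r\times\bC P^1$ is itself (trivially) a $\bC P^r$ bundle over $\bC P^1$. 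In every case the other toric structure is a $\bC P^r$ bundle over $\bC P^s$, as required.

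The step I expect to be the main obstacle is this last one — distinguishing the two possible directions of the fibration. A priori a different toric structure could present $M$ as a $\bC P^s$ bundle over $\bC P^r$ instead of the reverse, and ruling this out is precisely where the hypothesis $\ba\neq0$ enters; it is also why the low case $r=1$ (outside the range of Lemma \ref{B=0homologicallemma}) and the case in which $M$ happens to be a product of projective spaces each need a separate argument.
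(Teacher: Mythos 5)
Your proof is correct and follows essentially the same route as the paper: the facet count together with Lemma \ref{combinatoriallyequivalentlemma} and Lemma \ref{bundlelemma} produces the bundle structure, and Lemma \ref{B=0homologicallemma} is the engine that pins down the base dimension from the cohomology ring, with the product ($\bb=0$) case handled separately. Your repackaging of that cohomological step as the invariant $\nu$, and your explicit treatment of $r=1$ (which falls outside the stated hypothesis $r>1$ of Lemma \ref{B=0homologicallemma} and is glossed over in the paper), are if anything slightly more careful than the paper's own argument.
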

\begin{proof}This proof follows the proof of Corollary $6.3$ in \cite{CMS}.
By assumption, $\dim H^2(M)=2$, and therefore $\Delta_M$ has $\dim\Delta_M +\rank(H^2(M))=n+2$ facets, which by Lemma \ref{combinatoriallyequivalentlemma} tells us it is combinatorially equivalent to some $\Delta_r\times\Delta_s$ with $r+s=n$.  Since it is also smooth, Lemma \ref{bundlelemma} says $\Delta_M$ is a $\Delta_r$ bundle over $\Delta_s$ for some choice of $r$ and $s$ with $r+s=n$, which implies that $M$ is a $\bC P^r$ bundle over $\bC P^s$ by Lemma \ref{bundlecorrespondencetheorem}.  As in Definition \ref{bundledefinition}, this bundle is determined by a pair $(\ba;\kappa)$ where $\ba=(a_1,\ldots,1_r)$ can be chosen so that $a_i\geq 0$.

Now, assume that some $a_i\neq 0$ and that we have some other toric structure generating a polytope $\Delta'$.  By the above, $\Delta'$ is a $\Delta_k$ bundle over $\Delta_m$ where $k+m=n$, and hence is determined by a pair $(b;\kappa)$. We show that $k=r$ and $m=s$.  Comparing information about Betti numbers, we can easily conclude that $r+s=m+k$ and $(1+r)(1+s)=(1+k)(1+m)$ so that $\{r,s\}=\{k,m\}$.  We show that we can arrange $m=s$.

To see this, assume that $m=r$, so that $k=s$.  If $r=s$, there is nothing to prove.  First, assume $r<s$.  Now, since $M$ is a $\bC P^k$ bundle over $\bC P^m$, there is an element $\gamma$ in $H^2(M;\bZ)$ so that $\gamma^{m+1}=0, \gamma\neq 0$.  But $\gamma^{m+1}=\gamma^{r+1}$, and $r<s$, therefore $\gamma^s=0$.  But $M$ is a $\bC P^r$ bundle over $\bC P^s$ determined by the vector $\ba$, so as in the proof of Proposition \ref{mainlemma}, we know that $H^*(M;\bZ)\cong H^*(M_{\ba};\bZ)$ has generators $\alpha_0$ and $\beta_0$ with relations
\[
\alpha_0^{s+1}=0,\qquad \prod_{i=0}^r(\beta_0-a_i\alpha_0)=0.
\]
We recall now that we have assumed that some $a_i$ is not zero.  We claim that an element $\gamma$ as above cannot exist.  Indeed, if it did, then we would have $\gamma=A\alpha_0+B\beta_0$ with $\gamma^{s}=0$, and in particular, $\gamma^{s+1}=0$.  Therefore, by Lemma \ref{B=0homologicallemma} above, we have $B=0$, so that $\gamma=A\alpha_0$ and $\gamma^s=0$, with $\gamma\neq 0$.  Since $\gamma\neq 0$, $A\neq 0$ and therefore $\gamma^s=0$ implies that $\alpha_0^s=0$, which is a contradiction.  Therefore, $m=s$ and $k=r$, as required.

Now, consider the case where $r>s$.  Since $k=s$ and $m=r$, we then have $k<m$.  There are two cases to consider.  First, assume $\bb\neq 0$.  Thus, some $b_i$ is non-zero, and we can run the above argument with the roles of $\ba$, $r$, and $s$ replaced by $\bb$, $k$, and $m$ to get the desired result.

Now, if $\bb=0$, then our $\Delta_k$ bundle over $\Delta_m$ is actually $\Delta_k\times\Delta_m=\Delta_m\times\Delta_k$ which is also a $\Delta_m$ bundle over $\Delta_k$, hence a $\Delta_r$ bundle over $\Delta_s$, as desired.
\end{proof}

Using this, we can now prove Theorem \ref{toricclassifyingtheorem}, which we recall said that any toric symplectic manifold $(M,\omega,T)$ with $\dim H^2(M)=2$ is equivariantly symplectomorphic to the bundle $(M_{\ba},\omega^\kappa_{\ba},T_{\ba})$ for a unique tuple $(\ba;\kappa)$ with $0\leq a_1\leq\ldots\leq a_r$.

\begin{proof}[Proof of Theorem \ref{toricclassifyingtheorem}]
Since $\dim H^2(M)=2$, Lemma \ref{toricclassifyinglemma} then implies that $\Delta_M$ is a $\Delta_r$ bundle over $\Delta_s$ determined by some tuple $(\ba;\kappa)$ with $0\leq a_1\leq a_r$, and in fact that $(M,\omega,T)$ is equivariantly symplectomorphic to $(M_{\ba},\omega_{\ba}^\kappa,T_{\ba})$.  Lemma \ref{toricstructurelemma} implies that the tuple $(\ba;\kappa)$ determined in this fashion is in fact uniquely determined.
\end{proof}

\section{Equivalence Relations on Toric Symplectic Manifolds}

We now prove Theorem \ref{symplectomorphismtheorem}, which we recall said that if $(M_{\ba},\omega_{\ba}^\kappa)$ and $(M_{\bb},\omega_{\bb}^\kappa)$ are $\bC P^r$ bundles over $\bC P^s$ with $s=1$ determined by vectors $(\ba;\kappa)$ and $(\bb;\kappa)$, then they are isomorphic as symplectic bundles if and only if they are actually symplectomorphic.  First, we will prove a special case of this, which will act as a technical lemma.

\begin{lemma}\label{symplectomorphismlemma}Let $\ba=(a_1,\ldots,a_r)$ and let $(\ba;\kappa)$ determine the symplectic bundle $(M_{\ba},\omega_{\ba}^\kappa)$, as before, where we assume that $M_{\ba}$ is a $\bC P^r$ bundle over $\bC P^1$.  Now, assume that either $\ba'=(a_1+1,\ldots,a_i+2,\ldots,a_r+1)$ for some $i$ or that $\ba'=(a_1,\ldots,a_i+1,\ldots,a_j-1,\ldots,a_r)$ for some $i,j$.  Then $(M_{\ba},\omega_{\ba}^\kappa)$ and $(M_{\ba'},\omega_{\ba'}^\kappa)$ are symplectomorphic.
\end{lemma}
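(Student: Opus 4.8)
The plan is to realize $M_{\ba}$ and $M_{\ba'}$ as clutched symplectic $\bC P^r$-bundles over $S^2=\bC P^1$ (recall $s=1$), show the two clutching loops are homotopic through Hamiltonian loops, and then turn that homotopy into a Moser deformation. Concretely, present $M_{\ba}$ as obtained by glueing two copies of $(\bC P^r,\omega_{FS})\times D^2$ — with $\omega_{FS}$ normalized so that $\omega_{FS}(\ell)=r+1$ — along their common boundary via the loop
\[
\psi^{\ba}_t=\bigl[\mathrm{diag}(1,e^{2\pi i a_1 t},\dots ,e^{2\pi i a_r t})\bigr]\in\mathrm{PU}(r+1)\subset\Symp(\bC P^r,\omega_{FS}),\qquad t\in[0,1];
\]
by Lemma \ref{bundlecorrespondencetheorem} the form $\omega_{\ba}^\kappa$ is the minimal coupling form of this fibration, with $\kappa$ fixing the total area of the base, and likewise for $M_{\ba'}$ with $\psi^{\ba'}$.

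First I would check that both moves preserve the homotopy class of the clutching loop. The class of $\psi^{\ba}$ in $\pi_1(\mathrm{PU}(r+1))\cong\bZ/(r+1)$ is $\sigma_1(\ba)\bmod(r+1)$, since its lift to $U(r+1)$ has determinant $e^{2\pi i\sigma_1(\ba)t}$; the first move increases $\sigma_1(\ba)$ by $2+(r-1)=r+1$ and the second leaves it unchanged, so $[\psi^{\ba}]=[\psi^{\ba'}]$ and there is a smooth homotopy $\psi^{(s)}$, $s\in[0,1]$, of based loops in $\mathrm{PU}(r+1)$ from $\psi^{\ba}$ to $\psi^{\ba'}$, all acting on $\bC P^r$ by $\omega_{FS}$-preserving Hamiltonian diffeomorphisms. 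Such a homotopy yields a smooth symplectic $\bC P^r$-bundle $E\to S^2\times[0,1]$ equal to $M_{\ba}$ over $S^2\times\{0\}$ and to $M_{\ba'}$ over $S^2\times\{1\}$; as $H^1(S^2\times[0,1])=0$, the fibrewise forms $\omega_{FS}$ assemble into a closed coupling form $\tau$ on $E$, and with $\rho\colon E\to S^2\times[0,1]\to S^2$ the restrictions
\[
\omega_s:=\bigl(\tau+c(\kappa)\,\rho^{*}\omega_{S^2}\bigr)\big|_{S^2\times\{s\}}
\]
are symplectic forms which, after matching normalizations, coincide with $\omega_{\ba}^\kappa$ at $s=0$ and with $\omega_{\ba'}^\kappa$ at $s=1$. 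Identifying all the slices with $M_{\ba}\cong M_{\ba'}$ via the retraction of $S^2\times[0,1]$ onto $S^2\times\{0\}$, the $\omega_s$ become a smooth path of cohomologous symplectic forms on a fixed manifold — cohomologous because the coupling class is an invariant of the fibration isomorphism type and the base term is fixed — so Moser's theorem delivers the symplectomorphism $(M_{\ba},\omega_{\ba}^\kappa)\cong(M_{\ba'},\omega_{\ba'}^\kappa)$.

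The delicate point, which I expect to be the main obstacle, is that $\tau+c\,\rho^{*}\omega_{S^2}$ is non-degenerate only for $c$ large enough, and it must be non-degenerate on every slice for the particular $c(\kappa)$ attached to the given $\kappa$. Non-degeneracy of the $s$-slice amounts to positivity of the area of its smallest section, which at $s=0,1$ is precisely the hypothesis $\kappa>K_{\ba}(1)$, resp. $\kappa>K_{\ba'}(1)$; so one must arrange the interpolating homotopy $\psi^{(s)}$ so that this minimal section area stays positive for all intermediate $s$ as soon as $\kappa>\max(K_{\ba}(1),K_{\ba'}(1))$. I would do this by choosing $\psi^{(s)}$ as efficient as the topology allows — moving the two relevant line-bundle degrees toward each other by $+1$ and $-1$, so that the sum of the degrees stays essentially monotone along the homotopy — and then checking the section-area estimate directly. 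When $r=1$ this is just the assertion $H_a\simeq H_{a+2}$, where the interpolation and the estimate can be carried out completely explicitly; this also furnishes a base case should one prefer to reduce the general lemma to it by localizing near a suitable Hirzebruch sub-bundle.
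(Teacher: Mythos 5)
Your topological setup is correct: the clutching loop of $M_{\ba}$ lies in $\mathrm{PU}(r+1)$ with class $\sigma_1(\ba)\bmod (r+1)$ in $\pi_1(\mathrm{PU}(r+1))\cong\bZ/(r+1)$, the first move shifts $\sigma_1$ by exactly $r+1$ and the second by $0$, so the loops are homotopic and the bundles isomorphic. But the step you yourself flag as ``the delicate point'' is in fact the entire content of the lemma, and you do not supply it. A homotopy of clutching loops plus the coupling form gives a family $\omega_s$ of closed forms in a fixed cohomology class, but their nondegeneracy is only guaranteed for coupling constant $c$ sufficiently large, with a threshold depending on the intermediate (non-toric) bundles in the homotopy; that weaker statement is precisely Lemma \ref{deformationequivalentlemma} (deformation equivalence always, symplectomorphism only for $\kappa\gg 0$), which the paper proves separately and which does not suffice here. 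The sharp case is the second move $e_{i,j}$, where $K_{\ba}=K_{\ba'}$ and the lemma must hold for $\kappa$ arbitrarily close to the common threshold $\sigma_1(\ba)-1$: there your ``minimal section area stays positive along the homotopy'' must hold with no margin at all, and neither the choice of an ``efficient'' homotopy nor the section-area estimate is actually produced. Without that estimate, Moser cannot be invoked and the argument collapses to the already-known deformation equivalence.

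The paper's proof avoids paths of forms entirely. It works on the moment polytope: since $s=1$, a hyperplane $\mathcal{H}$ transverse to the fiber direction meets $\Delta_{\ba}^\kappa$ in a copy of $\Delta_r$, which has affine symmetries permuting its vertices. Cutting $\Delta_{\ba}^\kappa$ along $\mathcal{H}$ and regluing the two halves via such a symmetry produces $\Delta_{\ba'}^\kappa$ for the two elementary moves (for suitable choices of the conormal of $\mathcal{H}$), and on the manifold this regluing is realized by a symplectic isotopy supported in a collar of the hypersurface $\Phi^{-1}(\mathcal{H})$. This constructs the symplectomorphism directly at the given $\kappa$, all the way down to $\max(K_{\ba},K_{\ba'})$, with no limiting argument. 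If you want to salvage your route, you would need to prove the uniform nondegeneracy estimate along an explicit homotopy --- essentially redoing the Hirzebruch-surface cut-and-paste inside a $U(2)$-block --- at which point you have reconstructed the paper's argument in different language.
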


\begin{proof}
We will prove this theorem in two parts, corresponding to the cases where $\ba'=(a_1+1,\ldots,a_i+2,\ldots,a_r+1)$ for some $i$ or where $\ba'=(a_1+1,\ldots,a_i+1,\ldots,a_j-1,\ldots,a_r+1)$.  Both parts will use the same basic symplectomorphism technique, which we describe below.

Recall as in definition \ref{bundledefinition} that $\Delta$, a $\Delta_r$ bundle over $\Delta_1$, has coordinates $(x_1,\ldots,x_r,z)$ where $(x_1,\ldots,x_r)$ are coordinates on the standard $\Delta_r$ and $z$ will be thought of as the vertical direction, describing the copies of $\Delta_1$ over various points of the base copy of $\Delta_r$.  Recall also there there is a moment map, denoted $\Phi:M_{\Delta}\rightarrow\Delta$ which takes $M_{\Delta}$ to $\Delta$.  Let $\mathcal{H}$ be any hyperplane transverse to the $z$ direction with conormal $\eta_{\mathcal{H}}=(b_1,\ldots,b_r,1)$, with $b_i$ integers.  Consider the intersection of the hyperplane $\mathcal{H}$ with the polytope $\Delta$.  This gives a polytope $\Delta_{\mathcal{H}}$, which is still Delzant because the $b_i$ are integers.

Since $\mathcal{H}$ is transverse to the vertical $z$ direction, $\Delta_{\mathcal{H}}$ effectively splits the polytope $\Delta$ into a top half and a bottom half.  The polytope $\Delta$ is then described by taking the top half and bottom half and gluing them together along $\Delta_{\mathcal{H}}$ by the identity.  Now, consider an affine equivalence of the polytope $\Delta_{\mathcal{H}}$, which we will denote $\phi'$.  We can then define a polytope $\Delta'$ by taking the top half and bottom half, and gluing them together along $\Delta_{\mathcal{H}}$ by the affine equivalence $\phi'$ instead of the identity.  Since the map $\phi'$ is an affine equivalence, $\Delta'$ is evidently still a Delzant polytope.  An example of this is shown in Figure $1$.

We briefly explain why $M_{\Delta}$ and $M_{\Delta'}$ are symplectomorphic.  To do this, we redescribe the above process in a way that is the same symplectically, but not torically.  Namely, we will look on the level of the manifolds, not the polytopes.  First, we consider the hyperplane $Q=\Phi^{-1}(\mathcal{H})$ in $M_{\Delta}$, and thicken it by taking $Q\times\{(0,\epsilon)\}$ and intersecting this with $M_{\Delta}$.  As before, this section of the manifold effectively divides M into a top and bottom half, with the attaching maps to $Q\times\{(0,\epsilon)\}$ at $Q\times\{0\}$ and $Q\times\{\epsilon\}$ being the identity.  We can then symplectically isotop  $Q\times\{(0,\epsilon)\}$ to a thickened hyperplane $Q'\times\{(0,\epsilon)\}$ by an isotopy $\Psi$ where $Q'\times\{\epsilon\}$ is equal to $Q\times\{\epsilon\}$, while $Q'\times\{0\}$ is equivariantly symplectomorphic to $Q\times\{0\}$ by the map $\Phi^*\phi'$, the lift of the affine equivalence $\phi'$.  By doing this we can produce a manifold $M'$ by letting $M'=M$ both above and below $Q\times\{(0,\epsilon)\}$, but replacing $Q\times\{(0,\epsilon)\}$ with $Q'\times\{(0,\epsilon)\}$, with the attaching map to the top half at $Q'\times\{\epsilon\}$ being the identity as before, while the attaching map to the bottom half at $Q'\times\{\epsilon\}$ is the map $\Phi^*\phi'$.  $M_{\Delta}$ and $M'$ are then isotopic, hence symplectomorphic, by the isotopy $\Psi'$ which equals the identity on the top half and bottom half, and which isotops $Q\times\{(0,\epsilon)\}$ to $Q'\times\{(0,\epsilon)\}$ by the isotopy $\Psi$.  However, by construction $M'$ is symplectomorphic to $M_{\Delta'}$, which implies that $M_{\Delta}$ and $M_{\Delta'}$ are symplectomorphic, as desired.

\begin{figure}[t]
\includegraphics{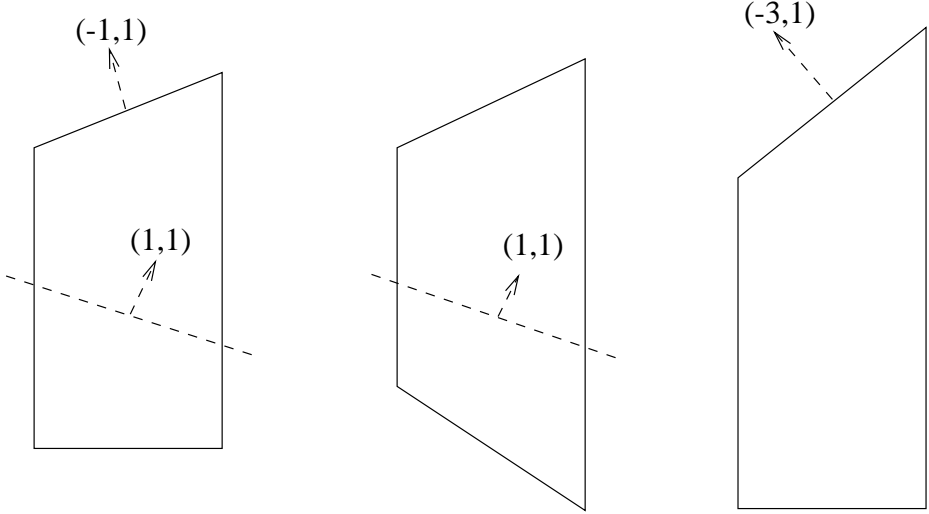}
\caption{Example of Lemma \ref{symplectomorphismlemma} with $r=s=1$, $\ba=1$, and $\eta_{\mathcal{H}}=(1,1)$.  The first figure is $\Delta_{(1)}^\kappa$, the dotted line in the first two figures represents the hyperplane $\mathcal{H}$, the second figure is $\Delta'$, and the third figure is $\Delta_{(3)}^\kappa$.  Notice that $\Delta'$ is affine equivalent to $\Delta_{(3)}^\kappa$}
\end{figure}

To complete the proof, we need only show that we can choose the hyperplane $\mathcal{H}$ and affine equivalence of $\Delta_{\mathcal{H}}$ in such a way that we can obtain $\Delta'=\Delta_{\ba'}^\kappa$ where $\ba'$ is one of the vectors from before.  Before we do this, we first notice that since $\Delta$ is a $\Delta_r$ bundle over $\Delta_1$, if we take $\mathcal{H}$ transverse to the $z$ direction and intersect it with $\Delta$, then $\Delta_{\mathcal{H}}$ is a simply a copy of $\Delta_r$.  We will label the vertices of the standard $\Delta_r$ as $v_0,\ldots,v_r$ where $v_0=(-1,\ldots,-1)$ and $v_i=(-1,\ldots,n,\ldots,-1)$ for $1\leq i\leq r$.

First, we consider vectors of the form $\ba'=(a_1+1,\ldots,a_i+2,\ldots,a_r+1)$.  To show that $\Delta'=\Delta_{\ba'}^\kappa$, we will consider the hyperplane with conormal vector $(1,\ldots,1)$.  Recall from Remark \ref{bundlerescalingremark} that a $\Delta_r$ bundle over $\Delta_1$ can be thought of as a copy of $\Delta_r$ fibered by vertical copies of $\Delta_1$, where the value of $a_i$ is the slope of increase of the sizes of $\Delta_1$ along the edge from $v_0$ to $v_i$.  Thus, to compute the value of $a_i$, we only need to know the size of the vertical edge over each vertex of $\Delta_r$.  It can then be easily computed that if we take the hyperplane described by $(1,\ldots,1)$ as above and take an affine equivalence of $\Delta_{\mathcal{H}}$ which takes the vertex of $\Delta_{\mathcal{H}}$ over $v_0$ to the vertex of $\Delta_{\mathcal{H}}$ over $v_i$, then this shortens the vertical edge over $v_0$ by $1$ unit, lengthens the vertical edge over $v_i$ by $1$, and fixes all other lengths.  This corresponds exactly to changing $\ba$ to $\ba'$.

Consider now the vectors of the form $\ba'=(a_1,\ldots,a_i-1,\ldots,a_j+1,\ldots,a_r)$ and take the hyperplane with conormal vector $(0,\ldots,-1,\ldots,0,1)$ where the $-1$ is in the $i^{th}$ slot.  Then as above, it can be easily computed that by taking an affine equivalence of $\Delta_{\mathcal{H}}$ which takes the vertex of $\Delta_H$ above $v_i$ to the vertex of $\Delta_H$ above $v_j$, we shorten the vertical edge over $v_i$ by $1$ unit, lengthen the vertical edge over $v_j$ by $1$ unit, and fix all other lengths.  Again, this corresponds exactly to changing $\ba$ to $\ba'$, which completes the proof.
\end{proof}

\begin{remark}It can be shown that the above argument only works in the $s=1$ case.  Indeed, if we try to run the above argument in the $s>1$ case, we will find that $\Delta_{\mathcal{H}}$ will correspond to a certain $\Delta_r$ bundle over $\Delta_{s-1}$ where $s-1>0$.  In the $s=1$ case, we had $\Delta_{\mathcal{H}}$ as a $\Delta_r$ bundle over $\Delta_0$, which is just a copy of $\Delta_r$, which has plenty of affine symmetries.  In fact, in $\Delta_r$, there is an affine symmetry which swaps any two vertices.  However, $\Delta_r$ bundles over $\Delta_{s-1}$ with $s-1>0$ have very few affine symmetries.  The only time when $\Delta_{\ba}^\kappa$ will have a symmetry is when some $a_i=0$ or when some $a_i=a_j$.  However, in our case it is easy to check that if we arrange our hyperplane $\mathcal{H}$ to have $\Delta_{\mathcal{H}}$ have one of these symmetries, then in fact the polytopes $\Delta$ and $\Delta'$ from before are affine equivalent.  More specifically, the affine equivalence $\phi$ of $\Delta_{\mathcal{H}}$ could be extended to a global affine equivalence of either the top half or bottom half, which obviously would imply that $\Delta$ and $\Delta'$ are affine equivalent.  In other words, if $s>1$, this symplectomorphism technique only picks up the equivariant symplectomorphisms corresponding to coordinate changes on the polytope $\Delta$.
\end{remark}

We can now use Lemma \ref{symplectomorphismlemma} above to prove Theorem \ref{symplectomorphismtheorem}.

\begin{proof}[Proof of Theorem \ref{symplectomorphismtheorem}]
First, we will assume that $(M_{\ba},\omega_{\ba}^\kappa)$ is symplectomorphic to $(M_{\bb},\omega_{\bb}^\kappa)$.  If this is true, then $H^*(M_{\ba})\cong H^*(M_{\bb})$, which by Proposition \ref{mainlemma}, implies that $M_{\ba}$ and $M_{\bb}$ are isomorphic as symplectic bundles.  Note that the specific choice of symplectomorphism will in general have nothing to do with the choice of isomorphism of symplectic bundles.

Now assume that $M_{\ba}$ is isomorphic to $M_{\bb}$ as a symplectic bundle.  By Proposition \ref{mainlemma}, there exists a vector $C$ so that $\sigma_1(C,\ba+C)=\sigma_1(0,\bb)$, Thus, we have $\sigma_1(\bb)=(r+1)C+\sigma_1(\ba)$ for some $C$. Without loss of generality, we assume $\sigma_1(\ba)\leq\sigma_1(\bb)$.  We show that any vector $\bb$ can be reached from $\ba$ by the following elementary moves.  We will denote by $e_1(\ba)$ the elementary move described by
\[
e_1(\ba)=(a_1+2,a_2+1,\ldots,a_r+1)
\]
and by $e_{i,j}(\ba)$ the elementary move described by
\[
e_{i,j}(\ba)=(a_1,\ldots,a_i-1,\ldots,a_j+1,\ldots,a_r).
\]
Lemma \ref{symplectomorphismlemma} then says that $(M_{\ba},\omega_{\ba}^\kappa)$ is symplectomorphic $(M_{e_1(\ba)},\omega_{e_1(\ba)}^\kappa)$ and to $(M_{e_{i,j}(\ba)},\omega_{e_{i,j}(\ba)})$ for all $i,j$.  Thus, if we can reach $\bb$ from $\ba$ by the elementary moves $e_1$ and $e_{i,j}$, Lemma \ref{symplectomorphismlemma} would give a symplectomorphism from $(M_{\ba},\omega_{\ba}^\kappa)$ to $(M_{\bb},\omega_{\bb}^\kappa)$ as desired.

First, we recall that $\sigma_1(\bb)=(r+1)C+\sigma_1(\ba)$, where by our assumption, $C\geq 0$.  Thus, by repeatedly applying $e_1$, we can get a vector
\[
\ba'=e_1^C(\ba)=(a_1+2C,a_2+C,\ldots,a_r+C)
\]
where $\sigma_1(\ba')=\sigma_1(\bb)$.

Next we can get a vector $\ba^1$ as follows:
\[
\ba^1=e_{1,2}^{a_1+2C-b_1}(\ba')=(b_1,a_1+a_2+3C-b_1,a_2+C,\ldots,a_r+C)
\]
Continuing on by induction, we get vectors $\ba^i$, where
\begin{align*}
\ba^i&=e_{i-1,i}^{a_1+\ldots+a_{i-1}+iC-b_1-\ldots-b_{i-1}}(\ba^{i-1})\\
&=(b_1,\ldots,b_i,a_1+\ldots+a_{i+1}-b_1\ldots-b_i+(i+2)C,a_{i+2}+C,\ldots,a_r+C).
\end{align*}
But then a straightforward computation shows that
\[
\ba^{r-1}=(b_1,\ldots,b_{r-1},a_1+\ldots+a_r+(r+1)C-b_1-\ldots-b_{r-1})=(b_1,\ldots,b_r)=\bb
\]
where $b_r=a_1+\ldots+a_r+(r+1)C-b_1-\ldots-b_{r-1}$ is true because
\[
a_1+\ldots+a_r+(r+1)C=\sigma_1(\ba')=\sigma_1(\bb)=b_1+\ldots+b_r
\]
Thus, we have reached $\bb$ from $\ba$ by using the elementary moves $e_1$ and $e_{i,j}$, as desired.
\end{proof}

Lastly, we will say more about the deformation class of $(M_{\ba}\omega_\kappa^a)$.  In particular, we will prove Lemma \ref{deformationequivalentlemma}, which says that if $M_{\ba}$ and $M_{\bb}$ are both $\bC P^r$ bundles over $\bC P^s$, then they are isomorphic as symplectic bundles if and only if they are deformation equivalent, and furthermore that they are symplectomorphic if $\kappa\gg 0$. This justifies our use of deformation equivalence as the equivalence relation on symplectic manifolds.

\begin{proof}[Proof of Lemma \ref{deformationequivalentlemma}]
First assume that $(M_{\ba},\omega_{\ba}^\kappa)$ is deformation equivalent to $(M_{\bb},\omega_{\bb}^\kappa)$.  Then $M_{\ba}$ and $M_{\bb}$ are diffeomorphic, and in particular, $H^*(M_{\ba})$ is isomorphic to $H^*(M_{\bb})$, so that by Proposition \ref{mainlemma}, $M_{\ba}$ is isomorphic to $M_{\bb}$ as a symplectic bundle.

Now assume that $M_{\ba}$ is isomorphic to $M_{\bb}$ as a symplectic bundle.  This implies that there is a diffeomorphism $\phi:M_{\ba}\rightarrow M_{\bb}$ so that
\[
\phi^*(\omega_{\bb}|_{F_{\phi(x)}})=\omega_{\ba}|_{F_x}
\]
where $F_x$ is the fiber over $x$ and $F_{\phi(x)}$ is the fiber over $\phi(x)$.  In other words, the diffeomorphism $\phi$ preserves the fiberwise symplectic structures of $M_{\ba}$ and $M_{\bb}$, which also implies that $\phi^*([\omega_{\bb}^\kappa])=[\omega_{\ba}^\kappa]$.  We wish to show that $M_{\ba}$ and $M_{\bb}$ are deformation equivalent.  By the above, it suffices to show that there is a family of symplectic forms $\omega_t$ so that $\omega_0=\omega_{\ba}^\kappa$ and $\omega_3=\phi^*(\omega_{\bb}^\kappa)$.  We can produce such a family explicitly.  Namely, if $\pi$ is the map from $M_{\ba}$ to $\bC P^s$ and $\omega_s$ is the standard symplectic form on $\bC P^s$, the deformation $\omega_t$ can be chosen explicitly as
\begin{equation*}
\omega_t = \left\{
\begin{array}{ll}
\omega_{\ba}^\kappa+Kt\pi^*(\omega_s) & \text{if } 0\leq t\leq 1\\
(t-1)\phi^*(\omega_{\bb}^\kappa) +(2-t)\omega_{\ba}^\kappa+K\pi^*(\omega_s)& \text{if } 1\leq t\leq 2\\
\phi^*(\omega_{\bb}^\kappa)+(3-t)K\pi^*(\omega_s) & \text{if } 2\leq t\leq 3.
\end{array} \right.
\end{equation*}
Recall that Lemma \ref{bundlecorrespondencetheorem} says
\[
\omega_{\ba}^{\kappa+K}=\omega_{\ba}^\kappa+\tfrac{K}{s+1}\pi^*(\omega_s)
\]
For $0\leq t\leq 1$, this implies that $\omega_t=\omega_{\ba}^{(\kappa+(s+1)(tK))},$ and hence is non-degenerate.  Also, if $K$ is large enough, then $\omega_t$ for $1\leq t\leq 2$ will all be non-degenerate.  Now, recall that since $\phi$ is an isomorphism of symplectic bundles, $\pi\circ\phi=\pi$, and hence $\pi^*(\omega_s)=\phi^*(\pi^*(\omega_s))$.  Using this and Lemma \ref{bundlecorrespondencetheorem} as above, we have for $2\leq t\leq 3$ that
\begin{align*}
\omega_t&=\phi^*(\omega_{\bb}^\kappa)+(3-t)K\pi^*(\omega_s)\\
&=\phi^*(\omega_{\bb}^\kappa)+(3-t)K\phi^*(\pi^*(\omega_s))\\
&=\phi^*(\omega_{\bb}^\kappa+(3-t)K\pi^*(\omega_s))\\
&=\phi^*(\omega_{\bb}^{\kappa+(s+1)((3-t)K)})
\end{align*}

Lastly, by the above, for any $\lambda>\kappa+K$ with $K$ sufficiently large, $(M_{\ba},\omega_{\ba}^\lambda)$ is isotopic to $(M_{\bb},\omega_{\bb}^\lambda)$ by the linear isotopy $t\omega_{\ba}^\lambda+(1-t)\omega_{\bb}^\lambda$, and hence they are in fact symplectomorphic, as required.
\end{proof}

\section{Proofs of Main Theorems}

We now give the proofs of the main theorems stated in the introduction.  First we will prove Theorem \ref{r<stheorem}, which we recall stated that if $(M^{2n}_{\ba},\omega_{\ba}^\kappa)$ is the $\bC P^r$ bundle over $\bC P^s$ determined by $(\ba;\kappa)$, then $N_n(\ba;\kappa)=1$ when $r<s$.

\begin{proof}[Proof of Theorem \ref{r<stheorem}]Since $r<s$, $\alpha^{r+1}\neq 0$, so by Proposition \ref{mainlemma}, we know $M_{\ba}\sim M_{\bb}$ if and only if
\[
\sigma_i(C,C+a_1,\ldots,C+a_r)=\sigma_i(0,b_1,\ldots,b_r)\quad 1\leq i\leq \min\{r+1,s\}=r+1
\]
If $C=0$, then $(C,C+\ba)=(0,\ba)$.  Therefore, $\sigma_i(0,\ba)=\sigma_i(0,\bb)$ for all $1\leq i\leq r+1$, which implies $\ba=\bb$, as desired.  If $C\neq 0$, then if $\sigma_{r+1}(C,C+\ba)=\sigma_{r+1}(0,\bb)=0$, we must have some $i$ where $C+a_i=0$, so $C=-a_i<0$.  But then there is no way that $\sigma_i(C,C+\ba)=\sigma_i(0,\bb)$, for all $i$ since all $b_i\geq 0$ and $C<0$.  This contradiction finishes the theorem.
\end{proof}

We now focus on proving the theorems stated for the $r\geq s$ case.  Before we do that however, we give an example of a vector $\ba$ and constant $\kappa$ so that $N(\ba;\kappa)>1$.  We first note that by Proposition \ref{mainlemma} and Lemma \ref{toricstructurelemma}, we must only produce two vectors $\ba=(a_1,\ldots, a_r)$ and $\bb=(b_1,\ldots,b_r)$ and a number $C$ so that
\[
\sigma_i(C,C+\ba)=\sigma_i(0,\bb),~1\leq i\leq\min\{r+1,s\}=s
\]
with $\ba\neq \bb$.  Indeed, then by Lemma \ref{toricstructurelemma}, since $\ba\neq\bb$ they represent different toric structures but by Proposition \ref{mainlemma} and Lemma \ref{deformationequivalentlemma}, we know that the underlying manifolds will be deformation equivalent. But $r\geq s$, so $\min(r+1,s)=s$, which does not force $(C,C+\ba)=(0,\bb)$.  We have the following explicit example.

\begin{example}\label{r=3s=2example}Let $\ba=(1,4,4)$, $\bb=(2,2,5)$ describe $\bC P^3$ bundles over $\bC P^2$.  Since $\sigma_1(\ba)=\sigma_1(\bb)$, $K_{\ba}=K_{\bb}$, where we recall $K_{\ba}$ is the number so that $\Delta_{\ba}^\kappa$ is a bundle for all $\kappa>K_{\ba}$.  Thus, as we increase $\kappa$, the two toric structures $(M_{\ba},\omega_{\ba}^\kappa,T_{\ba})$ and $(M_{\bb},\omega_{\bb}^\kappa,T_{\bb})$ will both appear at the same time, so that the corresponding jump in $N_5(\ba;\kappa)$, which occurs at $\kappa=7$, will be a jump of size $2$.  Also, a fairly simple check will show that there is no other choice of vector $\bc$ such that $M_{\bc}$ is equivalent to $M_{\ba}$.  More specifically, by Proposition \ref{mainlemma}, the only options would be vectors $\bc$ that had $\sigma_1(\bc)=5,1$, or $\sigma_1(\bc)\geq 9$, corresponding to $C=-1$ or $C=-2$, or $C\geq 0$.  The $C=-1$ and $C=-2$ cases can easily be checked not to work by hand.  If $C=0$, $(1,4,4)$ and $(2,2,5)$ are the only solutions, as a simple computation shows.

Now assume $C=1$.  If we take the vector $(1,4,4)$ and look for more examples with $C=1$, we must compare the vector $(1,2,5,5)$ to an arbitrary vector $(0,d_1,d_2,d_3)$.  But $\sigma_1(1,2,5,5)=13$ and $\sigma_2(1,2,5,5)=57$, while the biggest that $\sigma_2(0,d_1,d_2,d_3)$ could be with $\sigma_1(0,d_1,d_2,d_3)=13$ is when $(0,d_1,d_2,d_3)=(0,4,4,5)$, which has $\sigma_2(0,4,4,5)=56$.  Note that $(0,4,4,5)$ is indeed the biggest because it is the vector which is closest to having all terms equal, which an exercise in calculus will confirm is the biggest. That there are no examples with $C\geq 2$ follows directly from Lemma \ref{sigma2inductionlemma}.
Hence, for the above choice of $\ba=(1,4,4)$, we have
\begin{equation*}
N_5(\ba;\kappa)= \left\{
\begin{array}{ll}
0 & \text{if } \kappa\leq 7\\
2 & \text{if } \kappa>7.
\end{array} \right.
\end{equation*}
\end{example}

We will now go back and prove the various theorems we stated for the case $r\geq s$, starting with Theorem \ref{monotoneuniquenesstheorem}, which says that $N_n(\ba;\kappa)=1$ if $\kappa\leq 1$.
\begin{proof}[Proof of Theorem \ref{monotoneuniquenesstheorem}]
If $\ba$ defines a toric structure with $\kappa\leq 1$, then we know that $\kappa>-s+\sigma_1(\ba)=K_{\ba}$, so in particular,
\[
0<\sigma_1(\ba)<s+\kappa\leq s+1
\]
since $\kappa\leq 1$.  Proposition \ref{mainlemma} implies that if $\ba$ determines a bundle with a non-unique structure, then $r\geq s$ and there is a vector $b=(b_1,\ldots,b_r)$ and a number $C$ so that
\[
\sigma_i(C,C+a_1,\ldots,C+a_r)=\sigma_i(0,b_1,\ldots,b_r)\quad 1\leq i\leq s
\]
In our case, $\ba$ and $\bb$ satisfy $a_i\geq 0$ and $b_i\geq 0$, so that $\sigma_1(\ba)>0$ and $\sigma_1(\bb)>0$.  If they are both to be valid toric structures with $\kappa\leq 1$, they must also satisfy $\sigma_1(\ba)\leq s$ and $\sigma_1(\bb)\leq s$, as we saw above.  But $\sigma_1(C,C+\ba)=(r+1)C+\sigma_1(\ba)$, so putting this all together, we see that if $\sigma_1(C,C+\ba)=\sigma_1(0,\bb)=\sigma_1(\bb)$, then $C=0$ and $\sigma_1(\ba)=\sigma_1(\bb)\leq s$.

Now assume $\ba=(a_1,\ldots,a_r)$ with $a_i\geq 0$ and $\sigma_1(\ba)=k$ with $1\leq k\leq s$.  Then since $a_i\in\bZ$, the vector $\ba$ has at most $s$ non-zero terms. Therefore, for any two vectors $\ba$ and $\bb$ as above, we must have $\sigma_i(\ba)=\sigma_i(\bb)=0$ for all $i>s$.  Therefore, if $\sigma_i(\ba)=\sigma_i(\bb)$ for all $1\leq i\leq s$, we actually have $\sigma_i(\ba)=\sigma_i(\bb)$ for all $i$, which means that $\ba=\bb$ up to reordering, as required.
\end{proof}

We next prove Theorem \ref{stepfunctiontheorem}, which is a simple consequences of the above machinery.  Recall that Theorem \ref{stepfunctiontheorem} says first that the function $N_n(\ba;\kappa)$ is a step function which can only have jumps at the values $K_M+l(r+1)$, and second that if $r=s$, then these potential jumps are all of size $1$.

\begin{proof}[Proof of Theorem \ref{stepfunctiontheorem}]We first prove statement $(1)$.  Recall that $K_M\in\bZ$ is the largest number so that $N_n(\ba;K_M)=0$.  Recall also that $K_M$ need not equal $K_{\ba}$ for every possible $\ba$.  However, by the definition of $K_M$, there is always some vector $\bb$ so that $K_{\bb}=K_M$.  For convenience sake, we will assume that $K_{\ba}=K_M$. We know from Proposition \ref{mainlemma} that if there is another inequivalent toric structure on $M_{\ba}$, there is a vector $\bb$ so that $\ba\neq \bb$ and an integer $C$ so that
\[
\sigma_i(C,C+a_1,\ldots,C+a_r)=\sigma_i(0,b_1,\ldots,b_r)\quad 1\leq i\leq s<r+1
\]
In particular, we know that $C\geq 0$, since any $\bb$ determining a toric structure on $M_{\ba}$ must have $K_{\bb}\geq K_M=K_{\ba}$ which implies $\sigma_1(\bb)\geq\sigma_1(\ba)$.  Thus, $\sigma_1(\bb)=\sigma_1(\ba)+C(r+1)$ for some integer $C\geq 0$ and the value of $N_n(\ba;\kappa)$ can only jump at the values of $\kappa$ where
\[
\kappa=K_{\bb}=-s+\sigma_1(\bb)=-s+\sigma_1(\ba)+C(r+1)=K_{\ba}+l(r+1)=K_M+l(r+1)
\]
where $l=C\geq 0$, which finishes the proof of statement $(1)$.

We now prove statement $(2)$.  Assume that for some $\kappa$, there is a jump of size $2$ or more.  Then there exist two vectors $\ba\neq\bb$ with $K_{\ba}=K_{\bb}$ and a constant $C$ so that
\[
\sigma_i(C,C+a_1,\ldots,C+a_r)=\sigma_i(0,b_1,\ldots,b_r)\quad 1\leq i\leq s<r+1
\]
But $K_{\ba}=K_{\bb}$ implies $\sigma_1(\ba)=\sigma_1(\bb)$, which implies that $C=0$, which obviously implies
\[
\sigma_i(\ba)=\sigma_i(0,\ba)=\sigma_i(0,\bb)=\sigma_i(\bb)
\]
for all $1\leq i\leq s=r$, which implies that $\ba=\bb$ up to reordering.  This contradiction establishes statement $(2)$.
\end{proof}

Next, we prove Theorem \ref{finitenesstheorem}.  Recall that this theorem said that
\[
\Bigl(\kappa_1,\kappa_2>(r+1-\tfrac{1}{r})\sigma_1(\ba)-s\Bigr)\Longrightarrow \bigl(N_n(\ba;\kappa_1)=N_n(\ba;\kappa_2)\bigr).
\]
Before we prove this however, we prove a technical lemma which is the main force behind the theorem.

\begin{lemma}\label{finitenesslemma}Fix an $r\geq s\geq 2$.  Assume we have non-negative integer vectors $\ba=(a_1,\ldots,a_r)$ and $\bb=(b_1,\ldots,b_r)$ as before, and a real number $C$ so that
\[
\sigma_i(C,C+\ba)=\sigma_i(0,\bb),\quad\forall 1\leq i\leq s<r+1
\]
Then
\[
-\tfrac{1}{r+1}\sigma_1(\ba)\leq C\leq \tfrac{r-1}{r}\sigma_1(\ba)
\]
\end{lemma}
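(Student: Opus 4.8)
The plan is to use only the two identities $\sigma_1(C,C+\ba)=\sigma_1(0,\bb)$ and $\sigma_2(C,C+\ba)=\sigma_2(0,\bb)$, both of which are available since $s\ge 2$. Write $S=\sigma_1(\ba)$ and $T=\sigma_1(\bb)$, so that the $\sigma_1$ identity reads $(r+1)C+S=T$. Since every $b_i\ge 0$ we have $T=\sum_i b_i\ge 0$, and this immediately forces $C\ge-\tfrac{1}{r+1}S$, which is the lower bound.

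For the upper bound I would expand both $\sigma_2$'s via $\sigma_2(x_0,\dots,x_r)=\tfrac12\bigl[(\sum_i x_i)^2-\sum_i x_i^2\bigr]$. With $x_0=C$ and $x_i=C+a_i$ a direct computation gives
\[
\sigma_2(C,C+\ba)=\tfrac12\Bigl[r(r+1)C^2+2rCS+S^2-\textstyle\sum_{i=1}^r a_i^2\Bigr],
\]
while $\sigma_2(0,\bb)=\tfrac12\bigl[T^2-\sum_{i=1}^r b_i^2\bigr]$. Next I would feed in two elementary estimates, chosen so they point the same way: Cauchy--Schwarz on the $r$ components of $\bb$ gives $\sum_i b_i^2\ge T^2/r$, hence $\sigma_2(0,\bb)\le\tfrac{r-1}{2r}T^2$; and $\sum_i a_i^2\le(\sum_i a_i)^2=S^2$ (valid since all $a_i\ge0$) gives $\sigma_2(C,C+\ba)\ge\tfrac12\bigl[r(r+1)C^2+2rCS\bigr]$. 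Combining these with the equality $\sigma_2(C,C+\ba)=\sigma_2(0,\bb)$ and substituting $T=(r+1)C+S$ yields
\[
r(r+1)C^2+2rCS\le\tfrac{r-1}{r}\bigl((r+1)C+S\bigr)^2.
\]

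Clearing the denominator $r$ and collecting terms, the unwieldy $C^2$-coefficients collapse down to $-(r+1)$ and one is left with the clean quadratic inequality $(r+1)C^2+2CS-(r-1)S^2\le 0$, which factors as $\bigl((r+1)C-(r-1)S\bigr)(C+S)\le 0$. The lower bound already gives $C+S\ge\tfrac{r}{r+1}S\ge 0$: if $C+S>0$ we divide through the factored inequality and conclude $C\le\tfrac{r-1}{r+1}S\le\tfrac{r-1}{r}S$, while if $C+S=0$ the bound $C+S\ge\tfrac{r}{r+1}S$ forces $S=0$ and then $C=0=\tfrac{r-1}{r}S$. Either way we obtain the stated bound (in fact the slightly sharper $C\le\tfrac{r-1}{r+1}S$). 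The only genuine point to watch is arranging the algebra so that the leading $C^2$ terms cancel and applying the two crude bounds on $\sum a_i^2$ and $\sum b_i^2$ in the correct directions; beyond that the argument is pure bookkeeping.
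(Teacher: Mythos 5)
Your proof is correct, and it shares the paper's two key ingredients: the lower bound comes from non-negativity of $\sigma_1(0,\bb)=(r+1)C+\sigma_1(\ba)$ exactly as in the paper, and your Cauchy--Schwarz estimate $\sigma_2(0,\bb)\le\tfrac{r-1}{2r}\sigma_1(0,\bb)^2$ is precisely the paper's observation that $\sigma_2(0,\bb)$ is maximized, for fixed $\sigma_1$, when all $b_i$ are equal. Where you genuinely diverge is the endgame. The paper retains the term $\sigma_2(\ba)$, forms the difference of two quadratic polynomials $P_{\ba}(C)-P_{\bb}(C)$, and shows it is positive for $C>\tfrac{r-1}{r}\sigma_1(\ba)$ by evaluating the polynomial and its derivative at that endpoint. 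You instead also discard $\sigma_2(\ba)\ge 0$ (equivalently $\sum_i a_i^2\le\sigma_1(\ba)^2$), which reduces everything to the single quadratic inequality $(r+1)C^2+2C\sigma_1(\ba)-(r-1)\sigma_1(\ba)^2\le 0$; its factorization $\bigl((r+1)C-(r-1)\sigma_1(\ba)\bigr)\bigl(C+\sigma_1(\ba)\bigr)\le 0$, combined with $C+\sigma_1(\ba)\ge\tfrac{r}{r+1}\sigma_1(\ba)\ge 0$ from the lower bound, finishes the argument (and your degenerate case $C+\sigma_1(\ba)=0$ is handled correctly). This is cleaner than the value-and-derivative computation and in fact yields the strictly sharper bound $C\le\tfrac{r-1}{r+1}\sigma_1(\ba)$, which still implies the stated one since $\tfrac{r-1}{r+1}\le\tfrac{r-1}{r}$. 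The only thing you give up is the $\sigma_2(\ba)$ term, which the paper's formulation keeps in play and which could in principle tighten the bound further for specific vectors $\ba$; for the purposes of Theorem \ref{finitenesstheorem} that extra information is not needed.
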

\begin{proof}First, notice that if $C<-\tfrac{1}{r+1}\sigma_1(\ba)$, then
\[
\sigma_1(\bb)=(r+1)C+\sigma_1(\ba)<0
\]
which contradicts the fact that $\bb$ is a positive integer vector.  Thus, we must have $C\geq -\tfrac{1}{r+1}\sigma_1(\ba)$.

It remains to show that $C\leq\tfrac{r-1}{r}$.  Since $s\geq 2$, it suffices to show that if $C>\tfrac{r-1}{r}$, then any non-negative integer vector $\bb$ with $\sigma_1(0,\bb)=\sigma_1(C,C+\ba)$ satisfies
\[
\sigma_2(0,\bb)<\sigma_2(C,C+\ba).
\]
Indeed, for these values of $C$, there cannot exist a vector $\bb$ with $\sigma_i(0,\bb)=\sigma_i(C,C+\ba)$ for all $i$.   To see this, we will consider the two polynomials
\[
P_{\ba}(C):=\sigma_2(C,C+\ba),\quad P_{\bb}(C):=\sigma_2(0,\tfrac{(r+1)C+\sigma_1(\ba)}{r},\ldots,\tfrac{(r+1)C+\sigma_1(\ba)}{r})
\]
Notice that any vector $\bb$ with $\sigma_1(0,\bb)=\sigma_1(C,C+\ba)$, has $\sigma_2(\bb)<P_{\bb}(C)$ as a consequence of basic calculus. Indeed, the quantity $\sigma_2(0,b_1,\ldots,b_r)$ is maximized by $b_1=\ldots=b_r$, and the inequality follows from this fact.  Thus, to prove the theorem, it only remains to show that $P_{\ba}(C)-P_{\bb}(C)>0$ for all $C>\tfrac{r-1}{r}\sigma_1(\ba)$.  We will do this by showing that
\[
(P_{\ba}-P_{\bb})(\tfrac{r-1}{r}\sigma_1(\ba))>0,\quad (P_{\ba}-P_{\bb})'(\tfrac{r-1}{r}\sigma_1(\ba))\geq 0
\]
Then since $P_{\ba}-P_{\bb}$ is a degree $2$ polynomial, the desired result will follow.  We show this by explicitly computing both terms.

First, we see that
\[
P_{\ba}(C)=\sigma_2(C,C+\ba)=\tbinom{r+1}{2}C^2+\tbinom{r}{1}\sigma_1(\ba)C+\sigma_2(\ba)=\tfrac{(r+1)(r)}{2}C^2+r\sigma_1(\ba)C+\sigma_2(\ba).
\]

Next, after some rearranging, we see that
\begin{align*}
P_{\bb}(C)=\sigma_2(0,\tfrac{(r+1)C+\sigma_1(\ba)}{r},\ldots,\tfrac{(r+1)C+\sigma_1(\ba)}{r})&=\tbinom{r}{2}\left(\tfrac{r+1}{r}\right)^2C^2+2\tbinom{r}{2}\tfrac{r+1}{r^2}\sigma_1(\ba)C+\tbinom{r}{2}\tfrac{\sigma_1(\ba)^2}{r^2}\\
&=\tfrac{r(r-1)(r+1)^2}{2r^2}C^2+\tfrac{r(r-1)(r+1)\sigma_1(\ba)}{r^2}C+\tfrac{r(r-1)\sigma_1(\ba)^2}{2r^2}\\
&=\left(\tfrac{r^2-1}{r^2}\right)\left(\tfrac{(r+1)(r)}{2}C^2+r\sigma_1(\ba)C\right)+\tfrac{r-1}{2r}\sigma_1(\ba)^2.
\end{align*}
A simple computation gives
\[
(P_{\ba}-P_{\bb})(C)=\tfrac{r+1}{2r}C^2+\tfrac{1}{r}\sigma_1(\ba)C+\sigma_2(\ba)-\tfrac{r-1}{2r}\sigma_1(\ba)^2.
\]
Also, taking the derivative of this, we get that
\[
(P_{\ba}-P_{\bb})'(C)=\tfrac{r+1}{r}C+\tfrac{1}{r}\sigma_1(\ba).
\]
We then have the following computation:
\begin{align*}
(P_{\ba}-P_{\bb})(\tfrac{r-1}{r}\sigma_1(\ba))&=\tfrac{(r+1)(r-1)^2}{2r^3}(\sigma_1(\ba))^2+\tfrac{2r^2-2r}{2r^3}(\sigma_1(\ba))^2+\sigma_2(\ba)-\tfrac{r^3-r^2}{2r^3}\sigma_1(\ba)^2\\
&=\tfrac{2r^2-3r+1}{2r^3}\sigma_1(\ba)^2+\sigma_2(\ba)=\tfrac{(2r-1)(r-1)}{2r^3}\sigma_1(\ba)^2+\sigma_2(\ba)>0.
\end{align*}
where the last inequality follows since $a_i\geq 0$ and some $a_i\neq 0$ and $r\geq s>1$, which implies that $(2r-1)>0$ and $r-1>0$.  We also have
\[
(P_{\ba}-P_{\bb})'(\tfrac{r-1}{r}\sigma_1(\ba))=(\tfrac{r+1}{r}\tfrac{r-1}{r}+\tfrac{1}{r})\sigma_1(\ba)\geq 0.
\]
This computation completes the proof.
\end{proof}

The following similar lemma is useful in applications.

\begin{lemma}\label{sigma2inductionlemma}Fix an integer $C\geq 1$ and a non-negative integer vector $\ba$.  Consider the inequalities
\begin{equation*}
\sigma_2(0,\bb)\leq\sigma_2(C,C+\ba)
\tag{$*_0$}\end{equation*}
\begin{equation*}
\sigma_2(0,\bb)<\sigma_2(C+n,C+n+\ba)
\tag{$*_n$}
\end{equation*}
where $n\geq 1$ and in $(*_k)$ with $0\leq k\leq n$, $\bb$ ranges over all integer vectors with $\sigma_1(0,\bb)=\sigma_1(C+k,C+k+\ba)$.  Then
\[
(*_0)\Longrightarrow (*_n)~\forall~n\geq 1.
\]
\end{lemma}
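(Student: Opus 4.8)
The plan is to reduce the statement to an elementary computation with the function
\[
m(S):=\max\{\,\sigma_2(0,\bb):\bb\in\bZ_{\ge 0}^{r},\ \sigma_1(\bb)=S\,\}=\max\{\,\sigma_2(\bb):\bb\in\bZ_{\ge 0}^{r},\ \sigma_1(\bb)=S\,\},
\]
where I read the quantifier over $\bb$ in $(*_k)$ as ranging over \emph{non-negative} integer vectors, as in Proposition \ref{mainlemma} (over all integer vectors $\sigma_2$ is unbounded for fixed $\sigma_1$, so the statement is only meaningful in this range). Set $S_k:=\sigma_1(C+k,C+k+\ba)=(r+1)(C+k)+\sigma_1(\ba)$ and $P_k:=\sigma_2(C+k,C+k+\ba)$. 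Because the constraint on $\bb$ in $(*_k)$ is exactly $\sigma_1(\bb)=S_k$, the inequality $(*_0)$ is equivalent to $m(S_0)\le P_0$ and, for $n\ge 1$, $(*_n)$ is equivalent to $m(S_n)<P_n$. So I must show that $m(S_0)\le P_0$ forces $m(S_n)<P_n$ for every $n\ge1$.

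Next I would record the two relevant increments. Adding $(1,\dots,1)$ to an $N$-tuple $\bv$ changes $\sigma_2$ by $(N-1)\sigma_1(\bv)+\binom N2$; with $N=r+1$ and $\bv=(C+k,C+k+\ba)$ this gives $P_{k+1}-P_k=rS_k+\binom{r+1}{2}$. On the other side, $\sigma_2(\bb)=\tfrac12\big(\sigma_1(\bb)^2-\sum_i b_i^2\big)$, so $m(S)=\tfrac12\big(S^2-g(S)\big)$, where $g(S):=\min\{\sum_i b_i^2:\bb\in\bZ_{\ge0}^{r},\ \sigma_1(\bb)=S\}$ is attained at the balanced vector whose entries lie in $\{\lfloor S/r\rfloor,\lfloor S/r\rfloor+1\}$ (strict convexity of $x\mapsto x^2$); concretely $g(S)=rq^2+2pq+p$ when $S=qr+p$ with $0\le p<r$. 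Since $S_{k+1}-S_k=r+1$,
\[
m(S_{k+1})-m(S_k)=\tfrac12(r+1)(2S_k+r+1)-\tfrac12\big(g(S_{k+1})-g(S_k)\big).
\]

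The crux is the exact value of $g(S_{k+1})-g(S_k)=g(S_k+r+1)-g(S_k)$. Writing $S_k=qr+p$ with $0\le p<r$ and $q=\lfloor S_k/r\rfloor$, I would evaluate it from the formula for $g$ in the two cases $p\le r-2$ (the remainder becomes $p+1$) and $p=r-1$ (so $S_k+r+1=(q+2)r$); in both cases the answer is $g(S_{k+1})-g(S_k)=\big(2S_k+(r+1)\big)+2(q+1)$. Plugging this into the displayed increment and simplifying yields the clean identity
\[
m(S_{k+1})-m(S_k)=\big(P_{k+1}-P_k\big)-\big(\lfloor S_k/r\rfloor+1\big),
\]
so in particular $m(S_{k+1})-m(S_k)<P_{k+1}-P_k$. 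I expect this arithmetic step to be the only real obstacle: the softer bound $m(S)\le\tfrac{(r-1)S^2}{2r}$ obtained by letting the $b_i$ be real cannot be substituted for $m$ anywhere, since the hypothesis $(*_0)$ controls only the \emph{integer} maximum $m(S_0)$, which can fall short of the real one by order $r$; it is the exact formula for $g$ that makes the estimate close.

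Finally I would telescope over $k=0,\dots,n-1$: for $n\ge1$,
\[
m(S_n)-m(S_0)=\big(P_n-P_0\big)-\sum_{k=0}^{n-1}\big(\lfloor S_k/r\rfloor+1\big)\ <\ P_n-P_0 ,
\]
and combining with $m(S_0)\le P_0$ (which is $(*_0)$) gives $m(S_n)<P_n$, i.e.\ $(*_n)$. Everything apart from the computation of $g(S_{k+1})-g(S_k)$ is bookkeeping.
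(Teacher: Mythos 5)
Your proof is correct and follows essentially the same route as the paper: both identify the balanced vector as the maximizer of $\sigma_2$ at fixed $\sigma_1$ and show that its increment under $S\mapsto S+r+1$ falls short of the increment of $\sigma_2(C+k,C+k+\ba)$ by exactly $\lfloor S_k/r\rfloor+1\geq 1$ (the paper organizes this as an induction on $n$ comparing consecutive balanced vectors, you as a closed formula for $g$ plus telescoping). One small quibble with a parenthetical remark: for fixed $\sigma_1=S$ the quantity $\sigma_2$ is in fact bounded \emph{above} over all integer vectors (by $\tfrac{r-1}{2r}S^2$, via Cauchy--Schwarz applied to $\sum b_i^2$) and only unbounded below, so the maximum is the same whether or not one restricts to non-negative $\bb$; your reading is nonetheless the intended one.
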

\begin{proof}
An obvious induction shows that it suffices to prove the theorem in the case $n=1$.  Write $(r+1)C+\sigma_1(a)=kr+l$ for some integers $k,l$, where $k>0$ since $C\geq 1$ and $0\leq l<r$.  We will call $\ba'=(a'_0,\ldots,a'_r)=(C,C+\ba)$.  Then the integer vector $\bb$ with $\sigma_1(\bb)=\sigma_1(C,C+\ba)$ with largest value of $\sigma_2(0,\bb)$ is $\bb=(k,\ldots,k,k+1,\ldots,k+1)$ with exactly $r-l$ entries equal to $k$ and $l$ entries equal to $k+1$.  By assumption, we know $\sigma_2(0,\bb)\leq\sigma_2(\ba')$.  Now, consider $(C+1,C+1+\ba)$.  Then $\sigma_1(C+1,C+1+\ba)=(k+1)r+l+1$ and the vector $\bb'$ with this $\sigma_1$ and the largest $\sigma_2$ is now $\bb'=(k+1,\ldots,k+1,k+2,\ldots,k+2)$ where here there are $l+1$ entries equal to $k+2$ and $r-l-1$ entries equal to $k+1$.  Then we have $(C+1,C+1+\ba)=(a'_0+1,\ldots,a'_r+1)$ and $\bb'=(b_1+1,\ldots,b_{r-l}+2,\ldots,b_r+1)$, since $b_{r-l}=k$ while $b_{r-l}'=k+2$.  A simple computation shows that
\[
\sigma_2(C+1,C+1+\ba)=\sigma_2(\ba')+r\sigma_1(\ba')+\tbinom{r+1}{2}
\]
Another simple computation shows that
\begin{align*}
\sigma_2(\bb')&=\sigma_2(\bb)+r\sigma_1(\bb)-b_{r-l}+\tbinom{r-1}{2}+2r-2=\sigma_2(\bb)+r\sigma_1(\ba')-b_{r-l}+\tfrac{r^2-3r+2+4r-4}{2}\\
&=\sigma_2(\bb)+r\sigma_1(\ba')-k+\tfrac{r^2+r}{2}-1=\sigma_2(\bb)+r\sigma_1(\ba')+\tbinom{r+1}{2}+(-1-k)\\
&<\sigma_2(\ba')+r\sigma_1(\ba')+\tbinom{r+1}{2}=\sigma_2(C+1,C+1+\ba)
\end{align*}
where the last inequality is true because $\sigma_2(\bb)\leq\sigma_2(\ba')$ and $-1-k<0$.  Thus, we have the desired result.
\end{proof}

\begin{proof}[Proof of Theorem \ref{finitenesstheorem}]
First, by Theorem \ref{r<stheorem}, it suffices to consider $r\geq s>1$.  We will show that for any non-negative integer vector $\ba$, $N_n(\ba;\kappa_1)-N_n(\ba;\kappa_2)=0$ when $\kappa_1,\kappa_2>(r+1-\tfrac{1}{r})\sigma_1(\ba)-s$.  Without loss of generality, assume that $\kappa_1>\kappa_2$.  Now, if the result were false, we would have $N_n(\ba;\kappa_1)-N_n(\ba;\kappa_2)>0$, which would mean there was some vector $\bb$ with $\kappa_2<K_{\bb}\leq \kappa_1$ and a corresponding number $C$ so that
\[
\sigma_i(C,C+a_1,\ldots,C+a_r)=\sigma_i(0,b_1,\ldots,b_r)\quad 1\leq i\leq s<r+1
\]
Also, since $K_{\bb}>\kappa_2>(r+1-\tfrac{1}{r})\sigma_1(\ba)-s$ and $K_{\bb}=-s+\sigma_1(\bb)$, we know that
\begin{align*}
\sigma_1(\bb)=(r+1)C+\sigma_1(a)>(r+1-\tfrac{1}{r})\sigma_1(\ba)&\Longrightarrow(r+1)C>\tfrac{r^2-1}{r}\sigma_1(\ba)\\
&\Longrightarrow C>\tfrac{r^2-1}{r(r+1)}\sigma_1(\ba)=\tfrac{r-1}{r}\sigma_1(\ba)
\end{align*}
But this is impossible by Lemma \ref{finitenesslemma} above.
\end{proof}

We will now give a proof of Theorem \ref{unboundedtheorems=2}, which says that for any $r\geq s=2$,
\[
\sup_{\ba} N_{r+s}(\ba;\infty)=\infty
\]

\begin{proof}First we show that the above can be reduced to the case $r=s$.  A simple computation shows that for any vectors $\ba$ and $\bb$ and any real number $\kappa$,
\[
\sigma_i(\ba)=\sigma_i(\bb)~\forall 1\leq i\leq s,\Longrightarrow\sigma_i(\ba+\kappa)=\sigma_i(\bb+\kappa)~\forall 1\leq i\leq s
\]
Next, consider a vector $\ba=(a_1,\ldots,a_r)$ with $N_{2r}(\ba;\infty)=k$.  Then there exist vectors $\bb_1,\ldots,\bb_{k-1}$ with corresponding constants $C_{1},\ldots,C_{k-1}$ so that
\[
\sigma_i(C_j,\ba+C_j)=\sigma_i(0,\bb_j)~\forall 1\leq i\leq s,~1\leq j\leq k-1
\]
Now consider some $r>s$.  Then $r=s+l$ for some $l\geq 1$.  We can define the following vectors:
\[
\bb_j^l=(0,\ldots,0,C_{k-1}-C_j,C_{k-1}-C_j+\bb_j)
\]
where this vector has $l-1$ $0s$.  Then, the above computation shows that
\[
\sigma_i(0,\bb_j^l)=\sigma_i(0,\ldots,0,C_{k-1},C_{k-1}+\ba)~\forall 1\leq i\leq s,~1\leq j\leq k-1,~l\geq 1
\]
This shows that if the theorem holds for $r=s$, then it holds for any $r>s$.

We now consider the case where $r=s=2$.  We will show that for any $k$, there exists a vector $\ba_k=(a_k,b_k)$ with $N_4(\ba_k;\infty)=k$.

To see this, notice that any vector $\ba=(a,b)$ and any vector $\bb$ with $\sigma_1(0,\bb)=(C,C+a,C+b)$ can be written as $\bb=(C+a+x,2C+b-x)$ for some integer $x$.  Then
\[
\sigma_2(0,C+a+x,2C+b-x)=\sigma_2(C,C+a,C+b)\Longleftrightarrow bx-ax+Cx-x^2=bC+C^2
\]
We will look for solutions of the special form $C=\lambda x$.  Now, substituting for C and solving for $x$ gives
\[
x=\tfrac{b-a-b\lambda}{\lambda^2-\lambda+1}
\]
Thus, any choice of $a,b$ and $\lambda$ such that $x$ and $C$ are both integers will result in a vector $\bb$ with $\bb\neq \ba$, but $M_{\ba}\sim M_{\bb}$.  We consider the family where $\lambda=\tfrac{1}{n}$.  Substituting for $\lambda$, we have the following computation
\[
x=\tfrac{b-a-\tfrac{b}{n}}{\tfrac{1}{n^2}-\tfrac{1}{n}+1}=\tfrac{\tfrac{n-1}{n}b-a}{\tfrac{n^2-n+1}{n^2}}=\tfrac{n}{n^2-n+1}((n-1)b-na),\quad C=\tfrac{1}{n^2-n+1}((n-1)b-na)
\]
Thus, if we can find a pair of integers $a,b$ with $(n-1)b-na\equiv 0~\mod (n^2-n+1)$, then $x$ and $C$ will be integers as desired.  More specifically, if for each $k$ we can find integers $a_k,b_k$ and $k-1$ integers $n_1,\ldots,n_{k-1}$ with
\begin{align*}
(n_i-1)b_k-n_ia_k&\equiv 0 \mod (n_i^2-n_i+1)\Longleftrightarrow \\
&-b_k \equiv n_i(a_k-b_k) \mod (n_i^2-n_i+1), \forall 1\leq i\leq k-1
\end{align*}
then $\ba_k=(a_k,b_k$ we would have $N_4(\ba_k;\infty)\geq k$ for each $k$, as desired.  We will solve these equations using the Chinese Remainder Theorem.  More specifically, we restrict our attention to vectors of the form $(K,c_k+K)$, for a fixed integer $c_k$ which fixes the quantity $a_k-b_k=-c_k$. Then, plugging in and simplifying, we have reduced the problem to picking an integer $K$ so that
\[
K\equiv n_i(c_k-1) \mod n_i^2-n_i+1
\]
for some collection of integers $n_i$.
The Chinese Remainder Thereom then says that this system of equations will have a solution provided that the collection of integers $N_i:=n_i^2-n_i+1$ can be chosen to be relatively prime.  Thus, to complete the proof, we only need to produce a sequence $N_i=n_i^2-n_i+1$ such that $\gcd(N_i,N_j)=1$ for all $i,j$.  We will produce this sequence by induction.  In particular, we will produce a sequence $N_n$ such that if $i<j$, all prime factors of $N_i$ are less than all prime factors of $N_j$.  Such a sequence would obviously have $\gcd(N_i,N_j)=1$.

First, let $n_1=2$, so that $N_1=3$.  Now, assume that we have such integers $N_1,\ldots,N_{k-1}$ with corresponding integers $n_1,\ldots,n_{k-1}$ so that $N_i=n_i^2-n_i+1$ and such that if $i<j$, all prime factors of $N_i$ are less then all prime factors of $N_j$.  Let $p_k$ be the largest prime number dividing $N_{k-1}$.  Note that by our assumption that $N_1=3$, such a number $p_k$ will always exist for any $k$.  Now, let $n_k=p_k!$ and let $N_k=n_k^2-n_k+1$.  Then if $q$ is any prime such that $q\leq p_k$, then by construction we have $N_k\equiv 1 \mod q$, and hence the only primes dividing $N_k$ are bigger than $p_k$, as desired.  This computation finishes the construction of the sequence $N_n$, and hence finishes the proof of the theorem.
\end{proof}

The above techniques show that to prove Conjecture \ref{unboundedconjecture} for any $r\geq s\geq 2$, it is enough to check it for any $r=s$.  However, if $r=s\geq3$, the equations involved are much more complicated than for the $s=2$ case above, and it is not clear how to show directly that $\sup_{\ba}(N_{2r}(\ba;\infty))=\infty$.

We conclude the paper with a few interesting examples which explore Theorem \ref{finitenesstheorem} in the Fano case.  First, we explore the case $r=s=2$.

\begin{example}\label{fanoexample1}We claim that if $r=s=2$ and $(M_{\ba},\omega_{\ba}^\kappa)$ is Fano, then $N_4(\ba;\infty)=1$.  We recall from Remark \ref{Fanodefinition} that if $\ba$ is a Fano vector, we must have $K_a<1$.  However, since $s=2$, a simple computation shows that this implies that we must have $\sigma_1(\ba)\leq 2$, which gives us the $4$ cases $\ba=(0,0)$, $\ba=(0,1)$, $\ba=(1,1)$, and $\ba=(0,2)$.  We recall that Proposition \ref{producttheorem}, proven in \cite{D}, implies that if $\ba=(0,0)$, then $N_4(\ba;\infty)=1$.  Thus, we only need to consider $(0,1)$, $(1,1)$, and $(0,2)$.  The cases $(0,1)$ and $(1,1)$ are special cases of Example \ref{fanoexample2} below, and for $\ba=(0,1)$ or $\ba=(1,1)$, we get $N_4(\ba;\infty)=1$ as well.  Thus, it suffices to check that $N_4((0,2);\infty)=1$.

Since $r=s=2$, by Lemma \ref{finitenesslemma} it suffices to show that there is no vector $\bb$ and integer $C$ such that
\[
\sigma_i(0,\bb)=\sigma_i(C,2+C),\quad i=1,2,\quad -\tfrac{2}{3}\leq C\leq 1
\]
Since $r=s=2$, Theorem \ref{stepfunctiontheorem} says that there are no other examples with $C=0$.  Thus, it suffices to check that there is no examples with $C=1$.  Note that $\sigma_1(1,1,3)=5$ and $\sigma_2(1,1,3)=7$.  Thus, we must show there is no vector $\bb=(b_1,b_2)$ so that $\sigma_1(\bb)=5$ and $\sigma_2(\bb)=7$.  However, the vector $\bb$ with $\sigma_1(\bb)=5$ and largest $\sigma_2(\bb)$ is the vector $\bb=(2.5,2.5)$, which has $\sigma_2(\bb)=6.25<7$.  Thus, there is no $\bb$ with $\sigma_1(\bb)=5$ and $\sigma_2(\bb)=7$, so that $N_4((0,2);\infty)=1$.  Thus, if $r=s=2$ and $(M_{\ba},\omega_{\ba}^\kappa)$ is Fano, then $N_4(\ba;\infty)=1$.
\end{example}

Next, we examine some higher dimensional Fano examples.

\begin{example}\label{fanoexample2}Consider vectors of the form $\ba=(0,\ldots,0,1,\ldots,1)$ where $\sigma_1(\ba)=k$.  We will show that for such vectors, $N_{r+s}(\ba;\kappa)=1$ for all $\kappa>k-s$, and in particular, $N_{r+s}(\ba;\infty)=1$.

If this is false, then there exists some $\bb$ and $C$ so that $\sigma_i(C,C+\ba)=\sigma_i(0,\bb)$ where $i\geq 2$.  We will show that for our specific choice of $\ba$, this does not happen.

First, notice cannot choose $C<0$ because then $\sigma_1(b)<0$.  Second, we cannot choose $C=0$.  Indeed, in that case, any non-negative integer vector $\bb$ with $\sigma_1(\bb)=k$ has $\sigma_2(\bb)\leq\sigma_2(\ba)$, with equality only if $\ba=\bb$.  Thus, we must only show that we cannot choose $C>0$.  This will use Lemma \ref{sigma2inductionlemma}.

First, assume $C=1$.  In this case, we have $\sigma_1(1,1+\ba)=r+k+1$, so that the non-negative integer vector $\bb=(0,b_1,\ldots,b_r)$ with the largest value of $\sigma_2$ will be the vector $\bb=(0,1,\ldots,1,2,\ldots,2)$ with exactly $r-k-1$ entries equal to $1$ and exactly $k+1$ entries equal to $2$.  On the other hand, $(1,1+\ba)=(1,\ldots,1,2,\ldots,2)$ which has exactly $r-k+1$ entries equal to $1$ and exactly $k$ entries equal to $2$.  Thus, we see that
\[
(1,1+\ba)=(b_0+1,b_1,\ldots,b_{r-k-1},b_{r-k}-1,b_{r-k+1},\ldots,b_r).
\]
Thus, using the above substitution, a simple computation shows that
\begin{align*}
\sigma_2(1,1+\ba)&=\sigma_2(\bb)+(\sigma_1(\bb)-b_0)-(\sigma_1(\bb)-b_{r-k})-1\\
&=\sigma_2(\bb)+b_{r-k}-b_0-1=\sigma_2(\bb)+2-1=\sigma_2(\bb)+1
\end{align*}
so that $\sigma_2(1,1+\ba)$ is bigger then $\sigma_2(\bb)$, which means that there is no allowable choice of $\bb$ when $C=1$. Then, Lemma \ref{sigma2inductionlemma} shows that there are no examples with $C>1$, so that $N_n(\ba;\infty)=1$, as claimed.
\end{example}

The next example shows that the general Fano case is not as nice, and in fact there are examples of Fano toric manifolds which have more than one toric structure.

\begin{example}\label{fanoexample3}Let $\ba$ be the vector $\ba=(0,\ldots,0,2)$ with $r\geq 3$.  Note that here $r\geq 3$ is necessary, as is seen in Example \ref{fanoexample1}.  Notice that this vector is Fano with $s=2$, since $K_{\ba}(2)=\sigma_1(\ba)-2=2-2=0<1$.  On the other hand, by choosing $C=1$, we can consider the vector $(1,1+\ba)=(1,\ldots,1,3)$ with exactly $r+1\geq 3$ entries equal to $1$.  Then, consider the vector $\bb=(0,1,\ldots,1,2,2,2)$ with exactly $3$ entries equal to $2$.  First, we see that
\[
\sigma_1(\bb)=r+3=\sigma_1(1,1+\ba).
\]
However, we also have
\[
(1,1+\ba)=(b_0+1,b_1,\ldots,b_{r-3},b_{r-2}-1,b_{r-1}-1,b_r+1).
\]
Note that for the above to make sense, we must have $b_{r-2}\neq b_0$ which is true since $r\geq 3$.  So, using the above substitution, an easy computation shows that
\begin{align*}
\sigma_2(1,1+\ba)&=\sigma_2(b)+(\sigma_1(\bb)-b_0)-(\sigma_1(\bb)-b_{r-2})-(\sigma_1(\bb)-b_{r-1})+(\sigma_1(\bb)-b_r)\\
&-1-1+1+1-1-1=\sigma_2(\bb)-b_0+b_{r-2}+b_{r-1}-b_r-2=\sigma_2(\bb)
\end{align*}
so that $\sigma_i(\bb)=\sigma_i(1,1+\ba)$ for $i=1,2$.  Also, an easy application of Lemma \ref{finitenesslemma} says that $C\geq 2$ is not possible.  Indeed, by Lemma \ref{finitenesslemma}, we know that if $C$ is to support a vector $\bb$ with the desired properties, then
\[
C<\tfrac{r-1}{r}\sigma_1(\ba)=\tfrac{r-1}{r}2<2.
\]
This computation finishes the proof of the following:
\begin{equation*}
N_{r+2}(\ba;\kappa)= \left\{
\begin{array}{ll}
0 & \text{if } \kappa\leq 0\\
1 & \text{if } 0<\kappa\leq r+1\\
2 & \text{if } r+1<\kappa.
\end{array} \right.
\end{equation*}
\end{example}

\end{document}